\newtheorem{theorem}{Theorem}[section]
\newtheorem{lemma}[theorem]{Lemma}
\newtheorem{example}[theorem]{Example}
\newtheorem{remark}[theorem]{Remark}
\newtheorem{proposition}[theorem]{Proposition}
\newtheorem{corollary}[theorem]{Corollary}
\newcommand{\F}{\boldsymbol{F}}
\newcommand{\f}{\boldsymbol{f}}
\definecolor{darkgreen}{rgb}{0.03, 0.5, 0.03}
 \newcommand{\bc}{\color{blue}} %
 \newcommand{\co} {\boldsymbol c}
 \newcommand{\Na} {\mbox{\texttt{Na}}}
  \newcommand{\Max} {\mbox{\rm Max}}
    \newcommand{\Spec} {\mbox{\rm Spec}}
         \newcommand{\GV} {\mbox{\texttt{GV}}}
\newcommand{\rad} {\mbox{\rm rad}}
  \newcommand{\stf} {\star{_{\!{_f}}}}
   \newcommand{\astf} {\ast{_{\!{_f}}}}
\begin{document}

\title []{$t$-local domains and valuation domains}
\author[]{Marco Fontana$^{(\star)}$ and Muhammad Zafrullah}

{ \date{ \today}}
%
\thanks{$^{(\star)}$ The first named author was partially supported by {\sl GNSAGA} of {\sl Istituto Nazionale di Alta Matematica}. } 
\address{M.F.: Dipartimento di Matematica, Universit\`a degli Studi
``Roma Tre'', 00146 Rome, Italy.}
\email{fontana@mat.uniroma3.it }
\address{M.Z.: Department of Mathematics, Idaho State University, Pocatello, ID83209-8085,USA.}
\email{mzafrullah@usa.net }

\begin{abstract}
In a valuation domain $(V,M)$ every nonzero finitely generated ideal $J$ is
principal and so, in particular,  $J=J^t$, hence  the maximal ideal $M$ is  a $t$-ideal.  Therefore, the $t$-local
domains  (i.e.,  the local domains, with maximal ideal being a $t$-ideal) are ``cousins'' of valuation domains, but, as we will see in detail, not so close.                                                                                                                     Indeed, for instance, a localization of a $t$-local domain is not necessarily $t$-local,  but of course a localization of a valuation domain is a valuation domain.

So it is natural to ask under what conditions is a $t$-local domain a valuation domain?
The main purpose of the present paper is to address this question, surveying in part previous work by various authors containing useful properties for   applying them to our goal.
 \end{abstract}

\maketitle

\begin{flushright}
{\sl \Small  Dedicated to David F. Anderson}
\end{flushright}

\bigskip

\section{Introduction}

 We begin by reviewing the notion of a $t$-local domain.

 Let $D$ be an integral domain with quotient field $K$, let $\F(D)$ be the set
of non-zero fractional ideals of $D$, and let
$\f(D)$ be the set of all nonzero finitely generated
$D$-submodules of $K$ (obviously, $\f(D)
\subseteq \F(D)$). For $E\in \F(D),$ let $E^{-1}:=\{x\in
K \mid xE\subseteq D\}$. The functions on $\F(D)$ defined by $E\mapsto
E^{v}:=(E^{-1})^{-1}$ and $E\mapsto E^{t}:=\bigcup \{F^{v} \mid 0\neq 
F \mbox{ is a finitely generated subideal of } E \}$, called respectively the $v$-{\it operation} and  the
$t$-{\it operation} on the integral domain $D$, come under the umbrella of star operations (briefly recalled in Section 2), discussed in
Sections 32 and 34 of \cite{[G]}, where the reader can find proofs of the
basic statements made here about the $v$-, $t$- and, more generally, the star operations.

Recall that a nonzero fractional ideal $E$ of $D$ is a 
{\it $v$-ideal},  or a {\it divisorial ideal}, (resp., a {\it $t$-ideal}) if $E=E^{v}$ (resp., $E=E^{t}$)  and a {\it $v$-ideal} (resp., a {\it $t$-ideal}) {\it of finite type}
if $E=E^v=F^{v}$ (resp., $E=E^t=F^{t}$) for some finitely generated $F\in \f(D)$ and, obviously, $F \subseteq E$.
 Next, the $t$-operation
is a   star operation of finite type on the integral domain $D$,  in the sense that $E\in \F(D)$ is a $t$-ideal if and only if for each finitely generated nonzero subideal $F$ of $E$
we have $F^{v}= F^t\subseteq E$  and it {\bc is} easy to see
that if $F$ is principal $F^{v}=F=F^{t}$.

 An integral ideal of $D$ maximal with respect to being an
integral $t$-ideal is called a {\it maximal $t$-ideal} of $D$ and it is always a prime ideal. We denote by $\Max^t(D)$ the set of all the maximal $t$-ideals of $D$. This set is non empty, since
every $t$-ideal is contained in a maximal $t$-ideal,  thanks to the definition of the $t$-operation and to  Zorn's Lemma.
An integral domain is called a {\it $t$-local domain} if  it is local and its maximal ideal is a $t$-ideal.

The purpose of
this article is to survey the notion indicating what $t$-local domains are,
where they may or may not be found and what their uses are. 

The first example of a $t$-local domain that comes to mind is a valuation
domain, i.e.,  a local domain $(V,M)$ in which every nonzero finitely generated ideal is
principal. In this case, we can say that for each $F\in \f(V)$ with $F\subseteq M$ we
have $F=(a)\in M$ and so $F^{t}=(a)^{t}=(a)\subseteq M$.
But, of course, $t$-local
domains are   much more general than that. We can, for example, show that if $P$
is a height one prime ideal of an integral domain $D$, then $D_{P}$ is a $t$-local domain.
 We can show, as well will in more generality,
that if $M=pD$ is a prime ideal
generated by a prime element of a domain $D$ then $M$ is a maximal $t$-ideal
and $D_{M}$ is a $t$-local domain. 
However, we   cannot just take a prime $t$-ideal $P$ of $D$ and claim that $D_{P}$
is a $t$-local domain, as there are examples of some domains $D$ with prime 
$t$-ideals  $P$ such that $D_{P}$ is not a $t$-local domain. In Section 2, we
discuss cases of prime $t$-ideals $P$ with $D_{P}$ a $t$-local domain and
cases of domains that have prime $t$-ideals $P$ with $D_{P}$ non $t$-local,
indicating also that if $D$ is $t$-local then, for some multiplicative set $S$ of $D$,
 $D_{S}$ the ring of fractions may not be a $t$-local domain. 

Now localization may not always produce $t$-local domains, but there are
elements of a special kind whose presence in a domain $D$ ensures that $D$
is a $t$-local domain. 
 In Section 3, we record the results related
to the fact that the presence of a nonzero   nonunit comparable element (definition recalled later) in an integral
domain $D$ makes $D$ into a $t$-local domain. The related results include
for instance (1) the effects the presence of a nonzero  nonunit comparable
element on different kinds of domains, (2) the presence of a nonzero
comparable element in some domains would make them into valuation domains, 
if $D$ is Noetherian then the presence of a nonzero   nonunit comparable
element in $D$ makes $D$ a DVR (= discrete valuation ring), (3) a $t$-local domain may not have a
comparable   element,  and so on, the list continues.

Citing Krull, P.M. Cohn \cite{[C]} showed that $D$ is a valuation domain if and only if 
$
D$ is a B\'ezout domain and a local domain. (In fact, in this result
\textquotedblleft B\'ezout\textquotedblright can be replaced by \textquotedblleft Pr\"ufer\textquotedblright; 
here $D$ is B\'ezout --respectively, Pr\"ufer-- if every nonzero finitely generated ideal of $D$ is principal --respectively,
invertible--.)
 In Section 4, we show that $D$ is a valuation domain if and
only if $D$ is a GCD domain and a $t$-local domain, and point out that if, in
the above statement, we replace \textquotedblleft GCD
domain\textquotedblright\ by \textquotedblleft P$v$MD\textquotedblright\ the
result would still be a characterization of   a valuation domain (here, $D$ is a
P$v$MD, if for each pair $0\neq a,b \in D$ we have 
$((a,b)\frac{(a)\cap (b)}{ab})^{t}=D$).  But of course we do not stop here, we point to situations where
recognizing the  fact that the domain in question is a $t$-local domain
makes proving that it is a valuation domain easier.

Section 5   has to do with \textquotedblleft applications\textquotedblright\
which are essentially more efficient proofs of known results. We follow the
study of the ring called Shannon's quadratic extension in \cite{[HLOST]} and point out
that it is indeed a $t$-local domain, thus providing a shorter, more
efficient proof of Theorem 6.2 of \cite{[HLOST]}. We also point to examples of maximal $t
$-ideals $Q$ in a particular domain $D$ such that $D_{Q}$ is not $t$-local.

\bigskip
\section{Background results and $t$-local domains}

We start with proving some important preliminary results. But, for that, we need to
recall the formal definition of star operation.
A  \emph{star operation} on $D$ is a map $\ast:
\boldsymbol{{F}}(D) \to \boldsymbol{{F}}(D),\ E \mapsto E^\ast$,  such that, for all $x \in K$, $x \neq 0$, and
for all $E,F \in  \boldsymbol{F}(D)$, the following
properties hold:
\begin{enumerate}
\item[$(\ast_1)$] $(xD)^\ast=xD$;
 \item[$(\ast_2)$] $E
\subseteq F$ implies $E^\ast \subseteq F^\ast$;
\item[$(\ast_3)$] $E \subseteq E^\ast$ and $E^{\ast \ast} :=
\left(E^\ast \right)^\ast=E^\ast$;
\end{enumerate}
\cite[Section 32]{[G]}).

 If $\ast$ is a star operation on $D$, then we can
consider a map\ $\astf: \F(D) \to
\F(D)$ defined, for each $E \in
\F(D)$, as follows:

\centerline{$E^{\ast_{\!_f}}:=\bigcup \{F^\ast\mid \ F \in
\f(D) \mbox{ and } F \subseteq E\}$.}

\noindent It is easy to see that $\astf$ is a star
operation on $D$, called \emph{the  finite
type star operation associated to $\ast$} (or \emph{the star operation of finite
type associated to $\ast$}).  A star
operation $\ast$ is called a \emph{finite
type star operation} (or,  \emph{star operation of finite
type})  if $\ast=\astf$.  It is easy to see that
$(\ast_{\!_f}\!)_{\!_f}=\astf$ (that is, $\astf$ is
of finite type).

If $\ast_1$ and $\ast_2$ are two star operations on $D$, we
say that $\ast_1 \leq \ast_2$ if $E^{\ast_1} \subseteq
E^{\ast_2}$, for each $E \in \F(D)$, equivalently, if
$\left(E^{\ast_{1}}\right)^{\ast_{2}} = E^{\ast_2}=
\left(E^{\ast_{2}}\right)^{\ast_{1}}$, for each $E \in
\F(D)$.
Obviously, for each star operation  $\ast$,
we
have $\astf \leq \ast$.  Clearly, $v_{\!_f} =t$. Let $d_D$ (or, simply, $d$)  be the {\it identity star operation on $D$.}
  Clearly, $d \leq \ast$  and, moreover, $\ast \leq v$, for all star  operations $\ast$ on $D$  \cite[Theorem 34.1(4)]{[G]}.

   Recall that an integral domain $D$ is called a
{\it   Pr\"ufer $v$-multiplication domain}, (for short, {\it P$v$MD}), if every nonzero finitely generated $F\in
\f(D)$ is $t$-invertible, i.e., $(FF^{-1})^t =D$.  Obviously, every Pr\"ufer domain is a P$v$MD.
It is well known (see, Griffin \cite[Theorem 5]{[Gr]}) that $D$ is a
P$v$MD if and only if $D_{Q}$ is a valuation domain, for each maximal (or, equivalently, prime) $t$-ideal  $Q$ of $D$.

Any unexplained 
terminology is straightforward,   well accepted, and usually comes from {\cite{[Kap]}
or \cite{[G]}.

\bigskip
\begin{lemma}\label{HH} \emph{(Hedstrom-Houston \cite[Proposition 1.1]{[HH]})}
Let $\ast$ be a star operation on an  integral domain $D$ and let $\astf$ be the finite type star operation on $D$ canonically associated with $\ast$. If $P$ is a minimal prime ideal over a $\astf$-ideal of $D$, then $P$ is a $\astf$-ideal.
\end{lemma}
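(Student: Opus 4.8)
The plan is to prove the nontrivial inclusion $P^{\astf}\subseteq P$, since $P\subseteq P^{\astf}$ holds automatically by axiom $(\ast_3)$. By the very definition of the finite type operation, $P^{\astf}=\bigcup\{F^{\ast}\mid F\in\f(D),\ F\subseteq P\}$, so it suffices to fix a finitely generated ideal $F\subseteq P$ and an element $x\in F^{\ast}$, and then to show that $x\in P$. Throughout, write $I$ for the $\astf$-ideal of which $P$ is a minimal prime, so that $I=I^{\astf}$.

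First I would exploit the minimality of $P$ over $I$ through the standard behavior of radicals under localization: since $P$ is a minimal prime of $I$, in the localization $D_{P}$ one has $PD_{P}=\rad(ID_{P})$. As $F\subseteq P$ is finitely generated, a suitable power satisfies $F^{m}D_{P}\subseteq ID_{P}$, and clearing the finitely many denominators produces an element $s\in D\setminus P$ and an integer $m\ge 1$ with $sF^{m}\subseteq I$. This is the only place where the minimal prime hypothesis enters, and it reduces everything to finitely generated data.

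Next the idea is to transport this containment through the star operation, using two elementary and well-known identities valid for any star operation: $(xE)^{\ast}=xE^{\ast}$ for $x\in K$, and $(A^{\ast}B)^{\ast}=(AB)^{\ast}$ (whence, by iteration, $(F^{\ast})^{m}\subseteq(F^{m})^{\ast}$). From $sF^{m}\subseteq I$ together with $(\ast_2)$ and $(\ast_3)$ I obtain $s(F^{m})^{\ast}=(sF^{m})^{\ast}\subseteq I^{\ast}$; and since $sF^{m}$ is finitely generated, its $\ast$- and $\astf$-closures coincide, so in fact $s(F^{m})^{\ast}\subseteq I^{\astf}=I\subseteq P$. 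On the other hand $x\in F^{\ast}$ forces $x^{m}\in(F^{\ast})^{m}\subseteq(F^{m})^{\ast}$, hence $sx^{m}\in s(F^{m})^{\ast}\subseteq P$. As $P$ is prime and $s\notin P$, this gives $x^{m}\in P$ and therefore $x\in P$, which finishes the proof.

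The main obstacle will be the careful bookkeeping between $\ast$ and $\astf$: the statement genuinely needs the finite type hypothesis, because the bridge $sF^{m}\subseteq I$ only controls finitely generated pieces, and one must make sure that pushing $F^{m}$ and $sF^{m}$ through $\ast$ lands back inside the $\astf$-ideal $I$. Verifying the two star identities and the fact that $\ast$ and $\astf$ agree on finitely generated ideals is routine but is precisely where the axioms $(\ast_1)$–$(\ast_3)$ are used; the commutative-algebra input (that a minimal prime over $I$ yields $sF^{m}\subseteq I$) is standard.
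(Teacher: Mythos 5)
Your proposal is correct and follows essentially the same route as the paper's proof: localize at $P$ to get $sF^{m}\subseteq I$ with $s\notin P$, push through $\ast$, use that $\ast$ and $\astf$ agree on the finitely generated ideal $F^{m}$ so the image lands in $I^{\astf}=I\subseteq P$, and conclude via primality of $P$. You merely spell out the final element-wise step ($sx^{m}\in P\Rightarrow x\in P$) that the paper leaves implicit.
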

\begin{proof} 
Let $J$ be  a finitely generated (integral) ideal contained in $P$, the conclusion will follow if we show that $J^\ast \subseteq P$.
Since $P$ is minimal over some (integral) ideal $I$, with $I = I^{\astf}$, then $\rad(ID_P) = PD_P$ and, since $J$ is finitely generated,   there exists an integer  $m\geq 1$ such that $J^m D_P \subseteq ID_P$. Therefore, for some $s \in D\setminus P$,
$ s J^m \subseteq I$. Thus, $s(J^\ast)^m \subseteq s(J^m)^\ast  = s(J^m)^{\astf} \subseteq I^{\astf} = I\subseteq P$, and so  $J^\ast \subseteq P$, since $s \notin P$.
\end{proof}

The next step is to apply  this lemma  for obtaining some sufficient conditions for a local domain to be a   $t$-local domain (recall that an integral domain is  a  $t$-local domain if  it is local and its maximal ideal is a $t$-ideal).

\begin{remark} 
\em{(1) Note that if $D$ is an integral domain such that $\Max^t(D)$ contains only one element, then  $D$ is necessarily a $t$-local domain (and conversely).
If not, let  $M$   be the unique $t$-maximal ideal of $D$ and $N$ be a maximal ideal of $D$ with $N \neq M$. Let $x \in N\setminus M$, clearly, the $t$-ideal $xD$ must be contained in some $t$-maximal ideal.  
In the present situation $xD$ should be contained in $M$ and this is a contradiction.

(2) Note that if $D$ is   a local domain with divisorial maximal ideal, then clearly $D$ is $t$-local. The converse is not true: take, for instance, a valuation domain with nonprincipal maximal ideal (e.g., a 1-dimensional  non-discrete valuation domain).

 (3) In an integral domain $D$, the set of maximal divisorial ideals, $\Max^v(D)$, might be empty (e.g., take a 1-dimensional valuation domain with nonprincipal maximal ideal). However, if 
$\Max^v(D) \neq \emptyset$, a maximal divisorial ideal is a prime $t$-ideal, but it might be a nonmaximal $t$-ideal (for explicit examples see \cite{[GR]}, where the problem of when a maximal divisorial ideal is a 
maximal $t$-ideal is investigated).
}
\end{remark}

\begin{corollary} \label{A-AA-AB-AC} Let $D$ be a local domain with maximal ideal $M$.
Then, $D$ is $t$-local  in each of the following situations.
\begin{itemize}
\item[(1)] \label{A} The maximal ideal $M$ is minimal over (i.e., is the radical of) an integral $t$-ideal of $D$.

\item[(2)] \label{A'} The maximal ideal $M$ is an associated prime over a principal ideal of $D$ (i.e., there exist $a \in D$ and $b \in D\setminus aD$ such that $M$ is minimal over $(aD:_D bD)$).  

\item[(3)]\label {AA}  The maximal ideal $M$ is
minimal over (i.e., is the radical of ) a principal ideal of $D$.

\item[(4)]\label{AB} The maximal ideal $M$ is
principal.

\item[(5)]\label{AC}   The integral domain $D$ is $1$-dimensional.
\end{itemize}
\end{corollary}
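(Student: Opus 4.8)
The plan is to reduce every item to statement (1), which is itself nothing more than a direct application of the Hedstrom--Houston Lemma~\ref{HH}. For (1), I would apply Lemma~\ref{HH} with $\ast = v$, so that $\astf = v_{\!_f} = t$: if the maximal ideal $M$ is a minimal prime over an integral $t$-ideal, the lemma yields at once that $M$ is itself a $t$-ideal, whence $D$ is $t$-local. Since this single case carries all the real content, the remaining items amount to exhibiting, in each situation, an integral $t$-ideal of which $M$ is a minimal prime.

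Items (3), (4) and (5) are then formal reductions. For (3), a principal ideal $aD$ is a $t$-ideal (as recalled in the text, $F$ principal gives $F^v = F = F^t$), and the hypothesis $M = \rad(aD)$ says exactly that $M$ is minimal over the $t$-ideal $aD$; now (1) applies. For (4), if $M = mD$ is principal then it is divisorial, and since $t \leq v$ a divisorial ideal is automatically a $t$-ideal (indeed $E \subseteq E^t \subseteq E^v = E$ for any $v$-ideal $E$), so $M$ is a $t$-ideal directly; alternatively, $M = \rad(mD)$ puts us back in case (3). For (5), since $D$ is local and $1$-dimensional, $M$ is the unique nonzero prime of $D$; choosing any nonzero nonunit $a \in M$, every prime containing the nonzero ideal $aD$ must equal $M$, so $M = \rad(aD)$ and (5) reduces to (3).

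The substantive case is (2), and this is where I would concentrate the effort. Writing $I := (aD :_D bD)$, I would first note that the hypothesis $b \in D \setminus aD$ (together with $a,b$ nonzero, which is forced if $M$ is to be a minimal prime over a proper nonzero ideal) guarantees that $I$ is a proper nonzero integral ideal: it contains $a$ but not $1$. The key point is that $I$ is \emph{divisorial}, hence, by the observation used in (4), a $t$-ideal; the assumption that $M$ is minimal over $I$ then lets me conclude by (1). To see that $I$ is divisorial I would exhibit it as a finite intersection of principal fractional ideals: one computes $(aD :_K bD) = (a/b)D$, so that $I = (a/b)D \cap D$, an intersection of two divisorial fractional ideals, and an intersection of divisorial ideals is again divisorial. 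I expect the only mild obstacle to be pinning down this divisoriality of the colon ideal cleanly and verifying the properness and nonvanishing of $I$; once that is in hand, everything collapses to the single application of Lemma~\ref{HH} in case (1).
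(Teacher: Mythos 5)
Your proof is correct and follows essentially the same route as the paper: everything is reduced to case (1), which is Lemma \ref{HH} applied with $\ast = v$ (so that $\astf = t$), after observing that principal ideals and proper colon ideals $(aD:_D bD)$ are $t$-ideals. The only difference is that you supply the verification that $(aD:_D bD) = (a/b)D \cap D$ is divisorial, a fact the paper simply asserts.
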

\begin{proof} (1) is a straightforward consequence of Lemma \ref{HH}.
 (2) and (3) are obvious from (1), because a proper ideal of the type $(aD:_D bD)$ and a principal ideal  are both $t$-ideals.
(4) is  trivial consequence of (3).
Finally, (5) follows from the fact that, in this case, the maximal ideal is a minimal prime
over every nonzero (principal)  ideal contained in it.
\end{proof}

\begin{proposition} \label{AD}
If $(D, M)$ is  a  local domain and  the prime ideals of $D$ are  comparable in pairs,
 i.e., $\Spec(D)$ is
linearly ordered under inclusion, then $D$ is $t$-local.
\end{proposition}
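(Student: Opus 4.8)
The goal is to prove that the maximal ideal $M$ is a $t$-ideal. Since $t$ is of finite type, this amounts to showing that $J^t = J^v \subseteq M$ for every nonzero finitely generated ideal $J \subseteq M$. I would first reduce this to a statement about $J^{-1}$: because $J \subseteq D$ we have $J^v = (J^{-1})^{-1} \subseteq D$, so $J^v$ is integral and it is enough to rule out $J^v = D$; and since $(J^v)^{-1} = J^{-1}$, the equality $J^v = D$ would force $J^{-1} = D$. Hence the whole proposition reduces to showing that, for every nonzero finitely generated $J \subseteq M$, the fractional ideal $J^{-1}$ strictly contains $D$ (then $J^v$ is a proper integral ideal, so $J^v \subseteq M$).

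The plan is to feed in the hypothesis through a single consequence: when $\Spec(D)$ is linearly ordered, the radical of every ideal is prime, since the primes containing a given ideal form a chain and the intersection of a chain of primes is prime. Writing $J = (a_1, \dots, a_n)$ and $P_i := \rad(a_iD)$, these radicals are therefore totally ordered, so one of them, say $P_j = \rad(aD)$ with $a := a_j$, is the largest. I would then check that $\rad(J) = \rad(aD)$: on one hand $aD \subseteq J$ gives $\rad(aD) \subseteq \rad(J)$, and on the other each $a_i$ lies in $P_i \subseteq P_j$, so $J \subseteq P_j$ and thus $\rad(J) \subseteq P_j = \rad(aD)$. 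In particular $J$ contains the principal ideal $aD$ with the same radical, and since $J$ is finitely generated there is a smallest $m \geq 1$ with $J^m \subseteq aD$ (here $m \geq 1$ because $a$ is a nonunit, so $D = J^0 \not\subseteq aD$).

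To finish I would exhibit an element of $J^{-1} \setminus D$. From $J^m \subseteq aD$ one gets $(\frac1a J^{m-1})\,J = \frac1a J^m \subseteq D$, so $\frac1a J^{m-1} \subseteq J^{-1}$; and by minimality of $m$ we have $J^{m-1} \not\subseteq aD$, i.e. $\frac1a J^{m-1} \not\subseteq D$. Hence $J^{-1} \supsetneq D$, which by the reduction above yields $J^v \subseteq M$; as $J$ was an arbitrary finitely generated subideal of $M$, this shows $M = M^t$ and $D$ is $t$-local.

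The delicate point is the reduction to a principal subideal, not the inverse-ideal computation. One is tempted to quote Corollary \ref{A-AA-AB-AC}(3) directly, which finishes at once whenever $M = \rad(xD)$ for some $x \in M$; but this can fail, since it may happen that $\rad(xD) \subsetneq M$ for every $x \in M$ --- for instance when $M$ is the union of the prime ideals properly contained in it, as occurs in suitable valuation domains --- so that no principal, nor even finitely generated, ideal has radical $M$. The function of the linear ordering is exactly to ensure that, even in that case, each finitely generated $J$ still contains a principal ideal with the same radical; making that step precise is where I expect the argument to need the most care.
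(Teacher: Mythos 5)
Your argument is correct, and it takes a genuinely different route from the paper's. The paper fixes a finitely generated $I=(x_1,\dots,x_n)\subseteq M$, uses the linear ordering to identify the unique minimal prime $P$ of $I$ with the minimal prime $P(x_k)$ of one of the generators, and then invokes the Hedstrom--Houston result (Lemma \ref{HH}, via Corollary \ref{A-AA-AB-AC}(1)) to conclude that $P$ is a $t$-ideal, whence $I^t\subseteq P\subseteq M$. You instead reduce to showing $J^{-1}\supsetneq D$, prove $\rad(J)=\rad(aD)$ for a single well-chosen generator $a$, and then take the minimal $m$ with $J^m\subseteq aD$ to exhibit $\tfrac1a J^{m-1}$ as a witness in $J^{-1}\setminus D$. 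In effect you have inlined and simplified the proof of Lemma \ref{HH}: its argument also passes through $sJ^m\subseteq I$, but needs to localize at $P$ and clear a denominator $s\notin P$, whereas in your setting the containment $J^m\subseteq aD$ holds globally precisely because the linear ordering makes $\rad(J)$ equal to the radical of a principal subideal. The trade-off is that the paper's version delivers the stronger conclusion $I^t\subseteq P$ for the minimal prime $P$ of $I$ --- which is exactly what the Remark following the proposition uses to conclude that \emph{every} prime of $D$ is a $t$-ideal --- while your version only yields $J^v\neq D$ and then appeals to locality to place $J^v$ inside $M$; it proves the proposition as stated, is entirely self-contained, and your closing observation about why one cannot simply quote Corollary \ref{A-AA-AB-AC}(3) (since $M$ need not be the radical of any finitely generated ideal) correctly identifies the point where care is required.
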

\begin{proof}
Let $I=(x_{1},x_{2}, \dots, x_{n})$  be a nonzero proper finitely generated ideal of $D$ and let 
$P$ be a minimal prime of $I$.
The prime spectrum $\Spec(D)$ being linearly ordered forces $P$ to be
unique. Now let, for each $i=1, 2,\dots, n$, $P(x_{i})$ be the minimal prime of the principal ideal
$(x_{i})$. Again, by the linearity of order of $\Spec(D)$, for some $1\leq k\leq n$, $P(x_{k})\subseteq P(x_{j})$ for  all $j\neq k$.
 So $P(x_{k})\supseteq I$ and
so $P(x_{k})\supseteq P$.  But as $x_{k}\in P$, $P(x_{k})\subseteq P$.
 Whence every proper nonzero finitely generated ideal of $D$ is contained in a prime
ideal of $D$ that is minimal over a principal ideal and, hence, $P$  is a $t$-ideal, by Corollary \ref{A-AA-AB-AC}(1).
Thus, $I^{v}=I^t \subseteq
P\subseteq M$. Since $I$ is arbitrary as a finitely generated proper ideal of $D$, $M$ is
a $t$-ideal.
\end{proof}

\begin{remark}
\em{
Note that, {\sl mutatis mutandis}, from the proof of the previous proposition, if $\Spec(D)$ is
linearly ordered under inclusion, we do not deduce only that $D$ is $t$-local,  but also that every prime ideal of $D$ is a $t$-ideal (see also   \cite[Theorem 3.19]{[K-89]}).
}
\end{remark}

 It is known that if $J$ is a $t$-ideal of a ring of fractions $D_S$ of an integral domain $D$ with respect to a multiplicative subset $S$ of $D$, then $J\cap D $ is a $t$-ideal of $D$   \cite[Lemma 3.17(1)]{[K-89]}.  However, $I$ being a $t$-ideal of the integral domain $D$ does not imply, in general,  that $ID_S$ is a $t$-ideal of $D_S$, even though $ID_S \cap D$ is a $t$-ideal of $D$  \cite[Lemma 3.17(2)]{[K-89]}
In particular, as the following Example \ref{bad behaved} will show, the prime $t$-ideals may have a ``bad behaviour'', that is  if $P$ is a prime $t$-ideal of $D$ then $PD_S$ may not be a prime $t$-ideal for some multiplicative set $S$ disjoint with $P$.

 The authors of \cite{[MZ]} were led to this conclusion seeing an example given by W. Heinzer and J. Ohm   \cite{[HO]} of an essential domain (i.e., an integral domain  $D = \bigcap D_P$  where $P$ ranges over prime ideals of $D$ such that $D_P$ is a valuation domain) that is not a P$v$MD.
The reason for this conclusion came from the following observation. For each maximal ideal $M$ of the Heinzer-Ohm example $D$, $D_M$ is a unique factorization domain, meaning the Heinzer-Ohm example is a locally GCD domain. 
 Now, if for each maximal $t$-ideal $Q$, $QD_Q$ were a prime $t$-ideal of $D_Q$, then $D_Q$ would be a $t$-local domain  and a GCD domain. 
 But, as we shall see in the following  Proposition \ref{F}, a $t$-local GCD domain is a valuation domain. So, we would have $D_Q$ a valuation domain, for every maximal $t$-ideal $Q$ of $D$, making $D$ a P$v$MD.  Therefore, since in this example $D$ is not a P$v$MD,  $Q D_Q$ might not be a $t$-ideal, for some maximal $t$-ideal $Q$ of $D$.
  Indeed, an integral domain $D$ which is locally a P$v$MD is a P$v$MD if and only if $QD_Q$ is a $t$-ideal for every maximal $t$-ideal $Q$ of $D$.

In \cite{[Z-WB]}, a {\it prime ($t$-ideal) $P$} in an integral domain $D$ was called {\it well behaved} if $PD_P$ is a prime $t$-ideal of $D_P$. 
We say that  an {\it integral domain} $D$ is {\it well behaved} if every prime ($t$-ideal) of $D$ is well behaved. 
In \cite{[Z-WB]},  M. Zafrullah characterized well behaved domains and showed that most of the known domains, including P$v$MDs, are well behaved.
Furthermore, in the same paper, there is also an example of an integral domain $D$ such that every $Q\in \Max^t(D)$ is well behaved, but $D$ is not well behaved.
This example is obtained by a pullback construction, as briefly recalled below (for the details of the proofs see \cite{[Z-WB]}).

\begin{example}\label{bad behaved} 
\em{
Let $(V, M)$ be a valuation domain with $\dim(V) \geq 2$ and let $P$ be a nonzero nonmaximal prime ideal of $V$, set $D := V + XV_P[X]$.
In  \cite[Lemma 2.3,  2.4, and Proposition 2.5]{[Z-WB]}, it is proved that
$$
\Max^t(D) = \{ fD \mid f \in D, 
\ f  \mbox{ is a prime element of } D \mbox{ such that } f(0) \in V\setminus M \} \cup \{ N\},
 $$
where  $ N :=\{ f\in D\mid f(0) \in M\}= M + XV_P[X]$ is a maximal ideal of $D$.

By the previous description of $
\Max^t(D)$, it is not hard to see that, for each $Q \in \Max^t(D)$, $QD_Q$ is a maximal $t$-ideal of $D_Q$.
Now, we consider the prime ideal $\frak{P} := P + XV_P[X]$ of $D$. Since $P = \bigcap \{aV \mid a \in M\setminus P\}$, a direct verification shows that $ \frak{P} = \bigcap \{aD \mid a \in M\setminus P\}$.
Thus  $ \frak{P} $ is a $v$-ideal and, in particular, a $t$-ideal of $D$. However,  after observing that $\frak{P} \cap (V\setminus P) = \emptyset$, and so $ D_{\frak{P}} = (V + XV_P[X])_{P+ XV_P[X]} = (V_P[X])_{\frak{P}V_P[X]}$  and $\frak{P}V_P[X]= PV_P + XV_P[X]$,   it can be shown that  $\frak{P}D_{\frak{P}} = 
\frak{P}{V_{P}[X]}_{\frak{P}V_{P}[X]}$
  is not  a $t$-ideal of $D_{\frak{P}}$.
}
\end{example}
\medskip
   
By the previous observations and example, for each $P \in \Spec(D)$, if $D_P$ is a $t$-local domain, then $P$ is a $t$-prime ideal of $D$; on the other hand,  if  a prime ideal $P$ is a $t$-ideal  of $D$,  it is not true, in general, that $D_P$ is a $t$-local domain.
We give next some sufficient conditions for the localizations of an integral domain to be $t$-local domains.

\begin{proposition} \label{local t}
Let $D$ be an integral domain.
\begin{itemize}
\item[(1)]  If $Q$ is an associated prime ideal over a principal ideal of $D$, then $D_Q$ is a $t$-local domain.
\item[(2)]  If $Q \in \Max^t(D)$ and $Q$ is a potent ideal (i.e., it contains a nonzero finitely generated ideal that is not contained in any other maximal $t$-ideal), then $D_Q$ is a $t$-local domain.
\item[(3)]  If $D$ has the finite $t$-character (i.e., every nonzero nonunit element of $D$ belongs to at most a finite number of  maximal $t$-ideals), then $D_Q$ is a $t$-local domain, for each $Q \in \Max^t(D)$.

\end{itemize}
\end{proposition}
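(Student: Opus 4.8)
The plan is to reduce all three parts to the key fact that, for a prime ideal $Q$ of $D$, the maximal ideal $QD_Q$ of the localization $D_Q$ is a $t$-ideal precisely when finitely generated ideals of $D_Q$ contained in $QD_Q$ have their $v$-closure (computed in $D_Q$) landing back inside $QD_Q$. The unifying strategy is to find, for each finitely generated $F\subseteq QD_Q$, a prime ideal of $D_Q$ that is minimal over a principal (or associated-prime-type) ideal and that captures $F$, so that Corollary \ref{A-AA-AB-AC}(1) applies to $D_Q$ directly. Throughout I will use the standard compatibility between the $t$-operation on $D$ and on $D_Q$ together with the transfer results cited from \cite{[K-89]}.

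\medskip
\textbf{(1)} Suppose $Q$ is an associated prime over a principal ideal, so $Q$ is minimal over some $(aD:_D bD)$ with $b \notin aD$. First I would show that $QD_Q$ remains an associated prime over a principal ideal of $D_Q$: localizing commutes with the colon of finitely generated ideals, giving $(aD:_D bD)D_Q = (aD_Q :_{D_Q} bD_Q)$, and $Q$ minimal over the former forces $QD_Q$ minimal over the latter. Then Corollary \ref{A-AA-AB-AC}(2) applied to the local domain $D_Q$ yields that $D_Q$ is $t$-local.

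\medskip
\textbf{(2)} Suppose $Q \in \Max^t(D)$ is potent, so there is a finitely generated $0\neq J \subseteq Q$ with $J$ contained in no maximal $t$-ideal of $D$ other than $Q$. The key step is to argue that $Q$ is the \emph{unique} maximal $t$-ideal of $D$ containing $J$, and then to transfer this to $D_Q$. Concretely, I would consider the extension $JD_Q$ and show that $QD_Q$ is the only maximal $t$-ideal of $D_Q$ lying over $Q$: any maximal $t$-ideal of $D_Q$ contracts (via \cite[Lemma 3.17(1)]{[K-89]}) to a $t$-ideal of $D$ contained in $Q$, hence to a $t$-ideal contained in some maximal $t$-ideal of $D$; the potency of $J$ together with $J\subseteq Q$ pins this contraction down to $Q$. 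Since a local domain is $t$-local exactly when it has a unique maximal $t$-ideal (Remark, part (1), applied to $D_Q$), we conclude $D_Q$ is $t$-local. The delicate point here is ensuring that the finitely generated witness $J$ extends to a witness in $D_Q$ that still detects $Q$ and no competing maximal $t$-ideal.

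\medskip
\textbf{(3)} Finite $t$-character says every nonzero nonunit lies in only finitely many maximal $t$-ideals. I would deduce (2)'s hypothesis from (3): pick any nonzero nonunit $x \in Q$; by finite $t$-character $x$ lies in finitely many maximal $t$-ideals $Q=Q_0, Q_1,\dots, Q_n$, and by prime avoidance I can adjust to find a finitely generated ideal inside $Q$ avoiding each $Q_i$ for $i\geq 1$, exhibiting $Q$ as potent. Then part (2) gives the conclusion for each $Q\in \Max^t(D)$.

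\medskip
I expect the main obstacle to be in part (2): the subtlety is that extending a $t$-ideal to the localization can behave badly, exactly as Example \ref{bad behaved} warns, so I must be careful that the contraction--extension arguments for maximal $t$-ideals of $D_Q$ genuinely isolate $Q$ and do not silently assume the good behaviour that the example shows can fail. The cleanest route is probably to work with the uniqueness characterization of $t$-locality (Remark (1)) for $D_Q$ rather than directly verifying that $QD_Q = (QD_Q)^t$, thereby sidestepping the troublesome direct computation of the $t$-closure in the localization.
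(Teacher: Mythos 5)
Your part (1) coincides with the paper's proof: localize the conductor, note $(aD:_D bD)D_Q=(aD_Q:_{D_Q}bD_Q)$, and apply Corollary \ref{A-AA-AB-AC}(2) to the local domain $D_Q$. Your reduction of (3) to (2) is also sound and is essentially the proof of the cited fact \cite[Theorem 1.1(2)]{[ACZ]}: distinct maximal $t$-ideals are incomparable, so prime avoidance yields $y\in Q\setminus (Q_1\cup\dots\cup Q_n)$ and the ideal $(x,y)$ witnesses potency. (For (2) and (3) the paper simply cites \cite{[ACZ]}, so you are obliged to supply an argument, and that is where the problem lies.)

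The gap is in (2). You propose to show that $\Max^t(D_Q)$ is a singleton by contracting an arbitrary maximal $t$-ideal $N$ of $D_Q$ to a prime $t$-ideal $N\cap D\subseteq Q$ \cite[Lemma 3.17(1)]{[K-89]} and asserting that potency of $J$ ``pins this contraction down to $Q$.'' It does not: $N\cap D$ is merely \emph{some} prime $t$-ideal of $D$ contained in $Q$, and nothing forces $J\subseteq N\cap D$, so the potency hypothesis is never actually engaged. The observation that $N\cap D$ lies in some maximal $t$-ideal of $D$ is vacuously satisfied (indeed by $Q$ itself) even when $N\cap D\subsetneq Q$, and this is exactly what happens in the bad cases: for the Heinzer--Ohm domain discussed in Section 2 there is a maximal $t$-ideal $Q$ with $QD_Q$ not a $t$-ideal, yet every maximal $t$-ideal of $D_Q$ still contracts to a prime $t$-ideal inside $Q$ (compare also Example \ref{bad behaved 2}, where $\Max^t(D_Q)$ is infinite). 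The correct use of potency is a direct computation on inverses. Given a nonzero finitely generated ideal of $D_Q$ inside $QD_Q$, one may write it as $HD_Q$ with $H$ finitely generated and $J\subseteq H\subseteq Q$ (replace $H$ by $H+J$). Since $H^v=H^t\subseteq Q$, there is $u\in H^{-1}\setminus D$. If $H^{-1}\subseteq D_Q$, then $(D:_D uD)$ is a proper divisorial ideal of $D$ containing $H\supseteq J$, hence contained in $Q$ by potency; but writing $u=d/s$ with $s\in D\setminus Q$ gives $s\in (D:_D uD)\subseteq Q$, a contradiction. Therefore $(HD_Q)^{-1}=H^{-1}D_Q\neq D_Q$, so $(HD_Q)^v\subseteq QD_Q$, proving that $QD_Q$ is a $t$-ideal. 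This inverse computation is the missing idea; the uniqueness characterization of $t$-locality you invoke cannot substitute for it, because establishing that $\Max^t(D_Q)$ is a singleton is exactly as hard as showing $QD_Q=(QD_Q)^t$.
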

\begin{proof}
(1) Since $Q$ is minimal over a $t$-ideal of $D$ of the type $(aD:_D bD)$, $QD_Q$ is minimal over the ideal   $(aD:_D bD)D_Q = (aD_Q:_{D_Q} bD_Q)$, which is a $t$-ideal of $D_Q$, and thus  $QD_Q$is a $t$-ideal  of  $ D_Q$ (Corollary \ref{A-AA-AB-AC}(2)). 

(2) was proven in \cite[Theorem 1.1(1)]{[ACZ]} and (3) follows from (2), since each maximal $t$-ideal in an integral domain with finite $t$-character is potent  \cite[Theorem 1.1(2)]{[ACZ]}.
\end{proof}

\begin{remark}
\em{ Recall that a prime $t$-ideal $P$ of an integral domain $D$ is said to be a {\it $t$-sharp ideal} if  $\bigcap \{D_Q \mid Q\in \Max^t(D),  P \nsubseteq Q\} \not\subseteq D_P$ \cite[Section 3]{[HFP]}.
For a P$v$MD, it is known that  a  prime $t$-ideal $P$ is $t$-sharp  if and only if it is potent  \cite[Proposition 3.1]{[HFP]}.
}
\end{remark}

If  $D$ has the finite $t$-character, then every  maximal $t$-ideal is well behaved (Proposition \ref{local t}(3)). 
It was observed in \cite[Example 3.9]{[ACZ]}  that  the integral   domain $D$, described in Example \ref{bad behaved},  has the finite $t$-character and so even an integral domain with the finite $t$-character might not be well behaved. We provide next another example of an integral domain which   happens  to be $t$-local (and so, tri\-vially, with the finite $t$-character) and it is not well behaved (see, also, \cite[Remark 3.2(2)]{[ACZ]}).

\begin{example}\label{bad behaved 2} 
\em{
Let $D_1:= \mathbb{Z}_{(p)}$ and so $D_1$ is a rank 1 discrete valuation domain of the field of rational numbers $K_1:=\mathbb{Q}$, with maximal principal ideal $N_1:= p\mathbb{Z}_{(p)}$.

Let $D_2:=\mathbb{Q}[\![X,Y]\!]$ be the power series ring in two variables with coefficients in the field  $\mathbb Q$. Clearly, $D_2$ is  an integrally closed local Noetherian 2-dimensional integral domain with maximal ideal $N_2 := (X,Y)\mathbb{Q}[\![X,Y]\!] $ and field of quotients $K_2 := \mathbb{Q}(\!(X,Y)\!)$.
Let $D_3 = K_2[\![Z]\!] = \mathbb{Q}(\!(X,Y)\!)[\![Z]\!]$;  $D_3$ is a rank 1 discrete valuation domain of the field $K_3 := K_2(\!(Z)\!)$, with maximal ideal $N_3 := ZK_2[\![Z]\!]$.  
 Set  
 $$
 D := D_1+ N_2 +N_3 =  \mathbb{Z}_{(p)} + (X,Y)\mathbb{Q}[\![X,Y]\!] + Z \mathbb{Q}(\!(X,Y)\!)[\![Z]\!]\,.$$
 Clearly, $D \subset T:= D_2+N_3= \mathbb{Q}[\![X,Y]\!] + Z \mathbb{Q}(\!(X,Y)\!)[\![Z]\!] \subset  D_3 =K_2+N_3 = \mathbb{Q}(\!(X,Y)\!)[\![Z]\!] $.
By well known properties of rings arising from pullback constructions, it is not hard to see that the following hold.
\begin{itemize}
\item[(1)] $T$ is a 3-dimensional local ring with maximal ideal $Q:=N_2+N_3$ and the locali\-zations of $T$ at each one of its
infinitely many prime ideals of height 2 is a rank 2 discrete valuation domain.
\item[(2)] $T$  has  unique prime ideal of height 1, that is $N_3$. More precisely, $N_3$ is a  common prime ideal of $T$ and $D_3$ and  $N_3 = (T:D_3)$, since $N_3$ is the maximal ideal of the local domain $D_3$; therefore, $N_3$ is a $t$-ideal (in fact, a $v$-ideal) of $T$. Furthermore, $T_{N_3}= D_3$  is a rank 1 discrete valuation domain. 
\item[(3)] $D$ is a 4-dimensional local domain, with maximal ideal $M:= N_1+N_2+N_3$.
\item[(4)] $M$ is a $t$-ideal (in fact, a $v$-ideal) of $D$, since $M=pD$, and so $D$ is a $t$-local domain.
\item[(5)] $Q= N_2+N_3 = \bigcap\{p^nD \mid n\geq 0\}$ is the unique prime of height 3 in $D$ and it is a $t$-ideal (in fact, a $v$-ideal) of $D$, since $Q$ is a common ideal of $D$ and $T$ and, since it is the maximal ideal of $T$, $Q =(D:T)$.
\item[(6)]  For each one of the infinitely many height 2 prime ideals $P$ of $D$, there exist a unique prime ideal $P' $ of $T$ such that $P'\cap D = P$ and the canonical embedding homomorphism $D_{P} \subseteq T_{P'}$ is an isomorphism; thus $D_{P}$ is a rank 2 discrete valuation domain.
\item[(7)] Set $S:= \{ p^n\mid n\geq 0\}$, clearly $S$ is a multiplicative set of $D$ and $D_S = \mathbb Q + N_2+N_3 = \mathbb{Q} + (X,Y)\mathbb{Q}[\![X,Y]\!] + Z \mathbb{Q}(\!(X,Y)\!)[\![Z]\!]= D_Q=T\,.$ 
\item[(8)]  $QD_S =QD_Q=QT=Q$ is not a $t$-ideal of $D_Q=T$, since the elements $X, Y \in QD_Q=Q$ are $v$-coprime (note that, if $F$ is a nonzero finitely generated ideal in a $t$-ideal $I$,
then $F^{v}\subseteq I$). 
\item[(9)] By the previous properties, it follows that $T$ is a local, but not $t$-local,  P$v$MD, since the localization at all its nonzero nonmaximal prime ideals is a valuation domain and its maximal ideal $Q$  is not a $t$-ideal of $T$. Moreover, $T$ is not completely integrally closed and so it is not a Krull domain, since its complete integral closure is $D_3$, because $N_3 = (T:D_3)$. $T$ does not have the finite $t$-character, since each nonzero element inside its unique height 1 prime ($t$-)ideal $N_3$ is contained in all the infinitely many maximal  $t$-ideals, which are all its prime ideals of height 2. 
 \item[(10)]  Every nonzero prime ideal of $D$ is a $t$-ideal and all of them are well behaved, except $Q$, its unique prime of height 3 (which is a $t$-ideal of $D$, but it is not a $t$-ideal in $D_Q=T$). 
\end{itemize} 
}
\end{example}
\medskip

The following result was proved by D.D. Anderson, G. W. Chang, and M. Zafrullah in 2013 \cite[Proposition 1.12(1)]{[ACZ]}:

\begin{proposition} \label{ACZ}    Let $D$ be a $t$-local domain, then  the following hold.
\begin{enumerate}
\item[(1)] 
Every $t$-invertible ideal  (i.e., an ideal  $I$ such that $(II^{-1})^t =D$) is principal.
 \item[(2)]  If $I$ is an ideal of $D$ such that $(I^n)^t =D$ for some $n\geq 2$, then $I$ is principal.§ 
\end{enumerate}
\end{proposition}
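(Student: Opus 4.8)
The plan is to prove both parts by converting the $t$-theoretic hypotheses into honest ideal-theoretic equalities that are \emph{forced} by the $t$-locality of $(D,M)$, and then to invoke the classical fact that an invertible ideal in a local ring is principal. Throughout I write $M$ for the maximal ideal of $D$ and use the hypothesis $M=M^t$.

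For part (1), let $I\in\F(D)$ be $t$-invertible, so $(II^{-1})^t=D$. The key observation is that the integral ideal $II^{-1}$ (it is contained in $D$ by the definition of $I^{-1}$) cannot be contained in $M$: otherwise monotonicity of the $t$-operation together with $M=M^t$ would give $D=(II^{-1})^t\subseteq M^t=M$, a contradiction. Since $D$ is local, an integral ideal not contained in $M$ must equal $D$; hence $II^{-1}=D$, that is, $I$ is \emph{invertible} in the ordinary sense. It then remains to upgrade invertibility to principality in the local ring $D$: writing $1=\sum_i a_ib_i$ with $a_i\in I$ and $b_i\in I^{-1}$, at least one summand $a_jb_j$ lies outside $M$ and is therefore a unit $u$, and the usual computation closes the argument. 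Indeed, for any $x\in I$ one has $xb_j\in II^{-1}=D$, so $x=(xb_j)u^{-1}a_j\in a_jD$, while $a_j\in I$ gives the reverse inclusion; thus $I=a_jD$ is principal.

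For part (2), suppose $(I^n)^t=D$ with $n\geq 2$. I would reduce this to part (1) by showing that $I$ is $t$-invertible. Since $E\subseteq E^t$ always, the hypothesis gives $I^n\subseteq D$, whence $I\cdot I^{n-1}\subseteq D$ and therefore $I^{n-1}\subseteq I^{-1}$. Multiplying by $I$ and applying the $t$-operation, $D=(I^n)^t=(I\cdot I^{n-1})^t\subseteq(II^{-1})^t\subseteq D$, so $(II^{-1})^t=D$; that is, $I$ is $t$-invertible. Part (1) then applies directly and shows that $I$ is principal.

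The proofs are short once the right reformulation is in hand, so there is no heavy computational step; the single idea that does the work — and the place I would be most careful — is the passage from the $t$-equality $(II^{-1})^t=D$ to the genuine equality $II^{-1}=D$. This is precisely where the hypothesis that $M$ is a $t$-ideal (and not merely that $D$ is local) enters, through the contrapositive $II^{-1}\subseteq M\Rightarrow(II^{-1})^t\subseteq M^t=M$. Everything else is either the classical local-ring argument extracting a principal generator from a unit summand of $1\in II^{-1}$, or the elementary containment $I^{n-1}\subseteq I^{-1}$ used in the reduction of part (2) to part (1).
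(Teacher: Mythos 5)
Your proof is correct and follows essentially the same route as the paper's: in a $t$-local domain the unique maximal ideal is the unique maximal $t$-ideal, so $(II^{-1})^t=D$ forces $II^{-1}\not\subseteq M$, hence $II^{-1}=D$, and an invertible ideal of a local domain is principal. You merely spell out in more detail the classical local-ring extraction of a generator and the reduction of (2) to (1) via $I^{n-1}\subseteq I^{-1}$, both of which the paper leaves implicit.
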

\begin{proof}
 (1) If $I$ be a $t$-invertible ideal of $D$ then $II^{-1}$ is in no maximal $t$-ideals of $D$ and this implies that $II^{-1}D_Q=D_Q$ for every $Q \in \Max^t(D)$. 
  In this special situation, $\Max^t(D)=\Max(D)= \{M\}$, where $M$  is the only maximal ideal of the $t$-local domain $D$.  Thus, $I$ is invertible in a local domain and hence it is principal.

(2) In this situation, $I$ is $t$-invertible, hence the conclusion follows from (1).
\end{proof}

Note that the set \texttt{TI}$(D)$ of all the fractional $t$-invertible $t$-ideals of an integral domain $D$ is a group with respect to the operation $I {\hskip -0.2pt} {\boldsymbol \cdot}_t {\hskip -0.2pt} J := (IJ)^t$,  having as subgroup the set \texttt{Princ}$(D)$ of all nonzero fractional principal ideals of $D$. The quotient group \texttt{Cl}$^t(D) :=$ \texttt{TI}$(D)/$\texttt{Princ}$(D)$ is called the {\it $t$-class group of $D$}. The previous Proposition \ref{ACZ}  can be also stated by saying that: {\it  if $D$ is a $t$-local domain then \em{\texttt{Cl}}$^t(D)=0$.}
}

\medskip

\section{$t$-local domains and local dw-domains}

 A nonzero ideal $J$ of an integral domain $D$ is called a {\it Glaz-Vasconcelos ideal} (for short, a {\it \GV-ideal}) if $J$ is finitely generated and $J^{-1} = D$. 
The set of Glaz-Vasconcelos ideals of $D$ is denoted by $\GV(D)$ \cite{[GV]}.
Given a nonzero fractional ideal  $E$ of $D$, the {\it $w$-closure of} $E$ is the fractional ideal 
$E^w := \{x\in K \mid xJ \subseteq E, \mbox{ for some } J\in \GV(D)\}$. 
A nonzero fractional ideal $E$ is called a {\it $w$-ideal}  if $E=E^w$.  The $w$-operation was introduced  by Wang-McCasland in \cite{[WMcC]}.

It is well known that $w$, like $v$, $t$, and the identity operation $d$ are examples of star operations  (respectively, $w$, like  $t$, and $d$ are examples of star operations of finite type) \cite[Proposition 3.2]{[HH]} 
and also that $d\leq w\leq t\
\leq v$, this means that, for each $E\in \F(D)$, we have the following inclusions $ E^d:= E \subseteq E^w\subseteq E^t \subseteq E^v$.  
Furthermore, for each $E\in \F(D)$, $E^w = \bigcap \{ED_Q \mid Q \in \Max^t(D)\}$ and the set of maximal $w$-ideals of $D$, $\Max^w(D)$, coincide with the set of maximal $t$-ideals of $D$, $\Max^t(D)$  \cite{[W]}.

\bigskip

It is natural to ask what is the relation between a $t$-local domain and  a {\it $w$-local domain}, i.e., a local domain such that its maximal ideal is a $w$-ideal. 
A $t$-local domain is  necessarily a $w$-local domain, since $d\leq w\leq t$  and conversely, since as observed above,  $\Max^w(D)=\Max^t(D)$.  We will show that something more is true, that is, in a $t$-local domain,  every nonzero ideal is a $w$-ideal. For showing this, we need some preliminaries.

Recall that a {\it $DW$-domain} is an integral domain $D$  such that $d=w$, i.e., for each nonzero fractional ideal  $E$ of $D$, $E =E^w$; this is  equivalent to requiring that every nonzero (integral) finitely generated ideal of $D$ is a $w$-ideal.  The following result is due to  F. Wang \cite[Proposition 1.3]{[W-CA]} (see also A. Mimouni \cite[Proposition 2.2]{Mi}).

\begin{proposition} Let $D$ be an integral domain.
The following are equivalent.
\begin{itemize}
\item[(i)] $D$ is a $DW$-domain.
\item[(ii)]  Every nonzero prime ideal  of $D$ is a $w$-ideal
\item[(iii)]  Every maximal ideal of $D$ is a $w$-ideal
\item[(iv)]  Every maximal ideal  of $D$ is a $t$-ideal
\item[(v)]  $\emph{\GV}(D) =\{D\}$.
\end{itemize}

\end{proposition}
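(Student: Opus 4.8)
The plan is to prove the chain of equivalences $(i)\Rightarrow(iii)\Rightarrow(iv)\Rightarrow(ii)\Rightarrow(i)$ together with the separate equivalence $(i)\Leftrightarrow(v)$, exploiting the structural facts recalled just before the statement: namely that $w$ is a star operation of finite type, that $d\leq w\leq t$, and above all that $\Max^w(D)=\Max^t(D)$ and $E^w=\bigcap\{ED_Q\mid Q\in\Max^t(D)\}$.

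\medskip

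First I would dispatch the easy implications. For $(i)\Rightarrow(ii)\Rightarrow(iii)$ and $(i)\Rightarrow(iv)$: if $d=w$ then \emph{every} nonzero ideal equals its own $w$-closure, so in particular every prime ideal is a $w$-ideal, hence every maximal ideal is a $w$-ideal; and since $w\leq t$ and a maximal ideal $M$ satisfies $M\subseteq M^w\subseteq M^t\subsetneq D$ (the last inclusion being proper because $M^t$ is a proper $t$-ideal, $t$ being a star operation with $1^t=1$), forcing $M^t=M$, we get $(iii)\Rightarrow(iv)$ cleanly. The real content lies in closing the loop, and the cleanest route is $(iv)\Rightarrow(v)\Rightarrow(i)$. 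For $(iv)\Rightarrow(v)$: if every maximal ideal of $D$ is a $t$-ideal, then every maximal ideal is contained in some maximal $t$-ideal, which by maximality forces every maximal ideal to \emph{be} a maximal $t$-ideal, so $\Max(D)=\Max^t(D)$. Now take any $J\in\GV(D)$; by definition $J$ is finitely generated with $J^{-1}=D$, so $J^v=(J^{-1})^{-1}=D$, whence $J^t=D$ (as $J$ is finitely generated, $J^t=J^v$), so $J$ is a $t$-ideal only if $J=D$. But $J$ cannot sit inside any maximal $t$-ideal $Q$ without giving $J^t=J^v\subseteq Q\subsetneq D$, a contradiction; since $\Max(D)=\Max^t(D)$, $J$ lies in no maximal ideal at all, forcing $J=D$. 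Thus $\GV(D)=\{D\}$.

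\medskip

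The step I expect to be the crux is $(v)\Rightarrow(i)$, and here I would argue directly from the definition of the $w$-operation. Suppose $\GV(D)=\{D\}$ and let $E$ be any nonzero fractional ideal; I want $E^w=E$. Take $x\in E^w$, so $xJ\subseteq E$ for some $J\in\GV(D)$. By hypothesis $J=D$, hence $x=x\cdot 1\in xD=xJ\subseteq E$, giving $E^w\subseteq E$; the reverse inclusion is the axiom $E\subseteq E^w$. Therefore $d=w$, which is exactly $(i)$. This is the load-bearing implication because it is the only one that uses the hypothesis $(v)$ essentially and translates a condition on $\GV$-ideals into a statement about \emph{all} fractional ideals; everything else is either a definitional unwinding or an appeal to $\Max^w(D)=\Max^t(D)$.

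\medskip

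To tie off the remaining equivalence I would note $(ii)\Rightarrow(i)$ can be obtained without a separate argument once the cycle $(i)\Rightarrow(iii)\Rightarrow(iv)\Rightarrow(v)\Rightarrow(i)$ is in place, by inserting $(i)\Rightarrow(ii)\Rightarrow(iii)$, since $(ii)$ trivially implies $(iii)$ (maximal ideals are in particular nonzero primes). Thus the full implication diagram $(i)\Rightarrow(ii)\Rightarrow(iii)\Rightarrow(iv)\Rightarrow(v)\Rightarrow(i)$ gives all five equivalences in a single loop, and the only nontrivial links are $(iv)\Rightarrow(v)$ and $(v)\Rightarrow(i)$ described above.
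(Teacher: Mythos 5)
Your overall architecture --- the single cycle $(i)\Rightarrow(ii)\Rightarrow(iii)\Rightarrow(iv)\Rightarrow(v)\Rightarrow(i)$ --- is exactly the paper's, and your arguments for $(iv)\Rightarrow(v)$ and $(v)\Rightarrow(i)$ match the paper's proofs essentially verbatim and are correct. However, there is a genuine gap in your step $(iii)\Rightarrow(iv)$. You assert that a maximal ideal $M$ satisfies $M^t\subsetneq D$ ``because $M^t$ is a proper $t$-ideal, $t$ being a star operation with $1^t=1$.'' That principle is false: a star operation need not send proper ideals to proper ideals, and indeed the entire point of $\GV$-ideals is that there exist proper finitely generated ideals $J$ with $J^v=J^t=D$. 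Concretely, for $T=\mathbb{C}[X,Y]_{(X,Y)}$ (used elsewhere in this paper) the maximal ideal $M=(X,Y)T$ satisfies $M^v=M^t=T$, so your parenthetical argument, taken at face value, would ``prove'' that every maximal ideal of every domain is a $t$-ideal, i.e., that every domain is a $DW$-domain --- which would trivialize the proposition. Your justification nowhere uses the hypothesis $(iii)$ that $M=M^w$, and that hypothesis is indispensable here.

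The implication itself is true and the fix is short. The paper deduces $(iii)\Rightarrow(iv)$ from the fact, recalled just before the statement, that $\Max^w(D)=\Max^t(D)$: if the maximal ideal $M$ is a $w$-ideal it is automatically a maximal $w$-ideal, hence a maximal $t$-ideal, hence a $t$-ideal. Alternatively, you can argue directly from $M=M^w$: if $M^t=D$, then $1\in J^v$ for some finitely generated $J\subseteq M$, so $J^v=D$ and hence $J^{-1}=(J^v)^{-1}=D$, i.e., $J\in\GV(D)$; but then $1\cdot J=J\subseteq M$ gives $1\in M^w=M$, a contradiction. Either repair slots into your proof without disturbing the rest, which is sound.
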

\begin{proof}
 Obviously, (i)$\Rightarrow$(ii)$\Rightarrow$(iii).

(iii)$\Rightarrow$(iv) is a consequence of  the fact that $\Max^w(D) = \Max^t(D)$.

(iv)$\Rightarrow$(v) Let $J \in \GV(D)$ and $J \subsetneq D$. Let $M \in \Max^t(D)$ such that $J\subseteq M$,  then $D =J^v=J^t\subseteq M^t=M$, which is a contradiction.

(v)$\Rightarrow$(i) Let $I$ be a nonzero ideal of $D$ and let $0 \neq x \in I^w$ then, for some $J\in \GV(D)$, $xJ \subseteq I$. Since ${\GV}(D) =\{D\}$, $xD \subseteq I$ and so $I^w \subseteq I$.
\end{proof} 

From the previous proposition we deduce immediately the following.

\begin{corollary}\label{w=t} Let $D$ be an integral domain.
The following are equivalent.
\begin{itemize}
\item[(i)]    $D$ is a $t$-local.
\item[(ii)]    $D$ is a $w$-local
\item[(iii)]  $D$ is a local $DW$-domain.
\end{itemize}

\end{corollary}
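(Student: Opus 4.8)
The plan is to establish the two equivalences (i) $\Leftrightarrow$ (ii) and (ii) $\Leftrightarrow$ (iii) separately, using only the facts recalled immediately before the statement: the ordering $d\leq w\leq t$ of the star operations involved, the equality $\Max^w(D)=\Max^t(D)$, and the characterization of $DW$-domains given in the preceding Proposition. Throughout, $M$ denotes the maximal ideal of the (local) domain $D$.

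For (i) $\Rightarrow$ (ii), if $M=M^t$ then the chain $M=M^d\subseteq M^w\subseteq M^t=M$ yields $M=M^w$, so $D$ is $w$-local. For the converse (ii) $\Rightarrow$ (i), suppose $M=M^w$. Since $M$ is the maximal ideal of a local ring, the only integral ideal strictly containing it is $D$ itself; hence $M$ is maximal among proper integral $w$-ideals, i.e.\ $M\in\Max^w(D)$. Invoking $\Max^w(D)=\Max^t(D)$ gives $M\in\Max^t(D)$, so in particular $M=M^t$ and $D$ is $t$-local. For (ii) $\Leftrightarrow$ (iii): if $D$ is a local $DW$-domain then $d=w$ forces $M=M^w$, so $D$ is $w$-local; conversely, if $D$ is $w$-local, then its unique maximal ideal $M$ is a $w$-ideal, which is exactly condition (iii) of the preceding Proposition, whence $D$ is a $DW$-domain (and local by hypothesis). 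This closes the loop.

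The one step that needs care is (ii) $\Rightarrow$ (i), because the inclusion $M^w\subseteq M^t$ by itself does not preclude $M^t=D$; the real content is the equality $\Max^w(D)=\Max^t(D)$. If one prefers to argue directly, the point is that $M=M^w$ rules out any $\GV$-ideal sitting inside $M$: indeed, for $F\in\GV(D)$ one has $F^{-1}=D$, hence $F^w=D$, so $F\subseteq M$ would give $D=F^w\subseteq M^w=M$, a contradiction. Since $M^t=D$ would produce exactly such a finitely generated $F\subseteq M$ with $F^v=D$ (and then $F^{-1}=(F^v)^{-1}=D$), we must have $M^t=M$. Apart from this point, the whole argument is a routine unwinding of the definitions together with the cited Proposition.
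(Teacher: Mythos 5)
Your proof is correct and follows essentially the same route as the paper: the paper deduces the corollary "immediately" from the preceding Proposition together with the two facts it records just beforehand, namely $d\leq w\leq t$ (giving (i)$\Rightarrow$(ii)) and $\Max^w(D)=\Max^t(D)$ (giving the converse), which is exactly your argument. Your closing direct argument via $\GV$-ideals is sound but merely re-derives the chain (iii)$\Rightarrow$(iv)$\Rightarrow$(v) of that Proposition.
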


\begin{remark}\label{heinzer}  \emph{
Note that, for a $t$-local domain, it is not true that every nonzero ideal is a $t$-ideal, i.e.,  a domain such that $d=t$  or a {\it $DT$-domain};  even more, for a $t$-local domain, it may happen  that every nonzero prime ideal is a $t$-ideal,  without being a $DT$-domain (see the following Example \ref{DW-DT}).
The $DT$-domains are also called {\it fg$v$-domains}, that is domains such that every nonzero finitely generated ideal is a $v$-ideal since, for each nonzero ideal $I$, $I =I^t$ if and only if, for each nonzero finitely generated ideal $J $, $J^v=J^t =J$.
M. Zafrullah in \cite{[Z-fgv]} studied the fgv-domains  and he proved that an integrally closed fgv-domain is a Pr\"ufer domain.
Note that, for a Noetherian domain, being a $DT$-domain is equivalent to being a domain such that each nonzero ideal is divisorial (i.e.,  a domain such that $d=v$). In particular,  W. Heinzer  has proven that,  for a Noetherian domain $D$, if every nonzero ideal is divisorial, then $\dim(D) \leq 1$ \cite[Corollary 4.3]{[H]};  furthermore, for an integrally closed Noetherian domain (or, more generally, for any completely integraly cosed domain) $D$, every nonzero ideal is divisorial if and only if  $D$ is Dedekind domain \cite[Proposition 5.5]{[H]}.
\newline
Finally, note   that $DT$-domains are exactly the $DW$-domains that are at the same time {\it $TW$-domains}, i.e., domains such that $w=t$ \cite{Mi-2003}.
}
\end{remark}

\begin{lemma} Let $(T,N)$ be a local domain, let $\boldsymbol{k}(T) := T/N$, let $\varphi: T \rightarrow \boldsymbol{k}(T)$ be the canonical projection, and let $R$ be  a subring of the field $\boldsymbol{k}(T)$.
Set $D :=\varphi^{-1}(R)$.  then  $D$ is a $t$-local domain with maximal ideal $M$ if and only if $R$ is a $t$-local domain (with maximal ideal $\varphi(M)$).  
\end{lemma}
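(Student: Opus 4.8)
The plan is to translate ``$t$-local'' into the language of local $DW$-domains and then to transfer the defining condition across the conductor square. Throughout I would write $k := \boldsymbol{k}(T)$ and $N := \ker\varphi$, the maximal ideal of $T$; then $N\subseteq D$ is an ideal of $T$ with $D/N\cong R$ via $\varphi$. I may assume $R\subsetneq k$ (otherwise $D=T$ and the statement is the degenerate diagonal case) and $N\neq 0$ (otherwise $T=k$ and $D=R$). A first routine remark is that $N=(D:T)$: if $xT\subseteq D$ with $\varphi(x)\neq 0$, then $\varphi(x)k\subseteq R$ forces $R=k$, a contradiction, so $x\in N$. Picking $0\neq n\in N$ and writing any $t\in T$ as $t=(tn)/n$ with $tn,n\in N\subseteq D$ shows that $D$ and $T$ share the quotient field $K$, so all $v$- and $t$-computations for $D$ take place in $K$.

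Next I would settle the local part independently. If $R$ is local with maximal ideal $\mathfrak m$, then every proper ideal $I$ of $D$ lies in $M:=\varphi^{-1}(\mathfrak m)$: indeed, if $\varphi(I)=R$ then $I$ contains some $j=1+n$ with $n\in N$, which is a unit of $T$ and hence of $D$ (since $\varphi(j^{-1})=1\in R$), forcing $I=D$; so $\varphi(I)\subseteq\mathfrak m$ and $I\subseteq M$. Thus $D$ is local with maximal ideal $M$ and $\varphi(M)=\mathfrak m$. Conversely, if $D$ is local then $N\subseteq M$ (as $N$ is proper), and the maximal ideals of $R\cong D/N$ are precisely those of $D$ containing $N$, i.e. only $M$; hence $R$ is local with maximal ideal $\varphi(M)$. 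So $D$ is local iff $R$ is, with maximal ideals corresponding under $\varphi$.

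By Corollary \ref{w=t} together with the characterization of $DW$-domains preceding it, on each side ``$t$-local'' means ``local and $\GV=\{\,\text{the ring}\,\}$''; having matched local-ness, it remains to prove $\GV(D)=\{D\}\Longleftrightarrow\GV(R)=\{R\}$. The crux is an inverse identity for a finitely generated ideal $J=(a_1,\dots,a_n)\subseteq D$ with $J\not\subseteq N$: since $T$ is local, some $a_i\notin N$ is a unit of $T$, whence $JT=T$ and therefore $J^{-1}=(D:J)\subseteq(T:J)=T$. For $x\in T$ one then has $x\in J^{-1}\iff xa_i\in D\ (\forall i)\iff\varphi(x)\varphi(a_i)\in R\ (\forall i)$; and since any $y\in k$ with $y\varphi(J)\subseteq R$ already lies in the quotient field of $R$, this gives
\[
J^{-1}=\varphi^{-1}\big(\varphi(J)^{-1}\big),
\]
the inverse on the right computed in $R$, so that $J^{-1}=D\iff\varphi(J)^{-1}=R$. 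The equivalence follows: a proper $J\in\GV(D)$ cannot lie in $N$ (otherwise $T\subseteq J^{-1}\neq D$), its image $\varphi(J)$ is a proper nonzero finitely generated ideal of $R$ (it cannot be all of $R$, again by the unit argument) with $\varphi(J)^{-1}=R$, hence a proper member of $\GV(R)$; conversely any proper $\mathfrak f\in\GV(R)$ lifts generator by generator to a finitely generated $J\subseteq D$ with $J\not\subseteq N$, $J\neq D$ and $J^{-1}=\varphi^{-1}(\mathfrak f^{-1})=D$, a proper member of $\GV(D)$. Combining the local-ness correspondence and this $\GV$-correspondence with Corollary \ref{w=t} yields the stated equivalence, with $M=\varphi^{-1}(\varphi(M))$.

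I expect the main obstacle to be precisely the identity $J^{-1}=\varphi^{-1}(\varphi(J)^{-1})$: the key realization is that $J\not\subseteq N$ forces a generator to be a unit of the local ring $T$, which collapses the computation of $J^{-1}$ from $K$ down into $T$ and then into $k$ through $\varphi$. The complementary fact that every $J\subseteq N$ automatically satisfies $T\subseteq J^{-1}$ (since $N$ is a $T$-ideal inside $D$), so that such $J$ never belong to $\GV(D)$, is what makes the conductor $N$ the exact dividing line; everything else is bookkeeping on the pullback square.
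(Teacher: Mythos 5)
Your proof is correct, but it takes a genuinely different route from the paper's. The paper disposes of the local part by citing standard pullback facts and then transfers the $w$-operation itself across the square, invoking Mimouni's identity $(\varphi^{-1}(E))^w=\varphi^{-1}(E^w)$ for $E\in\F(R)$ and applying it to $E=\varphi(M)$ to get $M=M^w$ iff $\varphi(M)=(\varphi(M))^w$, before finishing with Corollary \ref{w=t}. You instead reduce, via Corollary \ref{w=t} and the $DW$-characterization $\GV(\cdot)=\{\cdot\}$, to a transfer of Glaz--Vasconcelos ideals, and you prove that transfer from scratch: the observation that a finitely generated $J\not\subseteq N$ has a generator which is a unit of $T$, hence $J^{-1}\subseteq(T:J)=T$ and $J^{-1}=\varphi^{-1}\bigl(\varphi(J)^{-1}\bigr)$, together with the complementary remark that $J\subseteq N$ forces $T\subseteq J^{-1}$, gives an exact correspondence between proper members of $\GV(D)$ and of $\GV(R)$. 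What your approach buys is self-containment (no appeal to the cited lemma on $w$-closures of pullbacks) and slightly more information, namely the bijection of proper $\GV$-ideals rather than just the single equivalence $M=M^w\Leftrightarrow\varphi(M)=(\varphi(M))^w$; it also handles cleanly the degenerate case where $R$ is a proper subfield of $\boldsymbol{k}(T)$, since the $DW$-formulation remains meaningful there. What the paper's approach buys is brevity, at the cost of leaning on two external references. Both arguments converge on Corollary \ref{w=t} as the final step.
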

\begin{proof} By the standard properties of the pullbacks constructions,  $D$ is a local domain with maximal ideal $M$ if and only if $R$ is  a local domain (with maximal ideal $\varphi(M)$)  \cite[Corollary 1.5]{[F]}.
Moreover, for each $E \in \F(R)$, $\varphi^{-1}(E) \in \F(D)$ and  $(\varphi^{-1}(E))^w =  \varphi^{-1}(E^w)$   \cite[Lemma 3.1]{Mi-2003}.
Note that $M=\varphi^{-1}( \varphi(M))$, and thus $M=M^w$ if and only if $\varphi(M) =(\varphi(M))^w$. Therefore $(D,M)$ is $w$-local if and only if $(R, \varphi(M))$ is $w$-local. The conclusion follows from Corollary \ref{w=t}. 
\end{proof}

\begin{example}\label{DW-DT} {\sl Example of a Noetherian  $t$-local domain (hence, a local $DW$-domain) which is not a $DT$-domain, but each nonzero prime ideal is a $t$-ideal. }

\rm{
Consider the 2-dimensional Noetherian integrally closed domain $T:=\mathbb{C}[X,Y]_{(X,Y)}$, which is clearly not a $t$-local domain, since its (finitely generated maximal ) ideal  $M:=(X,Y) \mathbb{C}[X,Y]_{(X,Y)}$ is not a divisorial ideal of $T$ (the only divisorial ideals of $T$ are its height 1 prime ideals).
However, by the previous lemma, the local 2-dimensional Noetherian domain $D:= \mathbb{R} +(X,Y) \mathbb{C}[X,Y]_{(X,Y)} \ (= \varphi^{-1}(\mathbb{R})$, where $\varphi: T \rightarrow T/M \cong \mathbb{C}$ is the canonical projection) is a $t$-local domain, since its maximal ideal $M=
(X,Y) \mathbb{C}[X,Y]_{(X,Y)}$ is divisorial as an ideal of $D$, being $M= (D:T)$.
Moreover, every nonzero prime ideal of $D$ is  a $t$-ideal. Indeed, for the well known properties of the pullback constructions, every nonzero nonmaximal prime ideal $P$ of $D$ is such that $P =Q \cap D$, where $Q$ is a nonzero nonmaximal prime ideal of $T$, and moreover $D_P$ is canonically isomorphic to $T_Q$  \cite[Theorem 1.4 (part (c) of the proof)]{[F]}. Since $T_Q$ is a DVR, $D_P$ is a DVR too and hence $PD_P$ is a $t$-ideal  of $D_P$ and, in particular, $P$ is a $t$-ideal of $D$.

Finally, $D$ is not $DT$-domain or, equivalently for Noetherianity, $D$ is not a divisorial domain, since $\dim(D) =2$ (Remark \ref{heinzer}).  Explicitly,  for instance, $M^2$ is not a divisorial  ideal (or, equivalently, not a $t$-ideal)  of $D$ (and of $T$), since
$(D:M^2) = ((D:M):M) =(T:M)=T$ and so $(D:(D:M^2))=(D:T) =M$.
}
\smallskip

\end{example}

Recall that an overring $T$ of an integral domain $D$ is is called {\it $t$-linked over $D$} if, for each nonzero finitely generated ideal $J $ of $D$ such that $J^t =D$, then $(JT)^t =T$. An integral domain is {\it $t$-linkative} if every overring is $t$-linked \cite{dobbs-et-al-1989}.

\begin{proposition} Let $D$ be an integral domain.
Then, $D$ is $t$-local domain
if and only if $D$ is a local $t$-linkative domain.
\end{proposition}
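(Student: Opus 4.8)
The plan is to reduce the statement to the $w$/$\GV$-dictionary already built up in this section, and then to spend all the real effort on the one nontrivial implication by exhibiting a single overring that detects the failure of the $DW$-condition. First I would record the reduction. By Corollary \ref{w=t} a local domain $D$ is $t$-local exactly when it is a local $DW$-domain, and by the preceding Proposition this holds exactly when $\GV(D)=\{D\}$. Moreover, for a nonzero finitely generated ideal $J$ one has $J^t=J^v$, and since the inverse of any fractional ideal is divisorial, $J^v=D$ is equivalent to $J^{-1}=D$; hence the clause ``$J$ finitely generated with $J^t=D$'' in the definition of a $t$-linked overring says precisely ``$J\in\GV(D)$''. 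So the whole assertion becomes: a local domain $D$ satisfies $\GV(D)=\{D\}$ if and only if every overring of $D$ is $t$-linked over $D$.

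The implication from $t$-local to $t$-linkative is then immediate: if $\GV(D)=\{D\}$ and $J$ is finitely generated with $J^t=D$, then $J\in\GV(D)$ forces $J=D$, so $(JT)^t=T^t=T$ for every overring $T$; thus $D$ is $t$-linkative. The substance lies in the converse, which I would prove in contrapositive form. Assume $D$ is local but not $t$-local, so its maximal ideal $M$ is not a $t$-ideal; being maximal this forces $M^t=D$, and therefore there is a finitely generated $J\subseteq M$ with $J^t=J^v=D$, i.e. a \emph{proper} member $J$ of $\GV(D)$. To produce an overring that is not $t$-linked I would pass to a valuation overring: let $V$ be a valuation ring with $D\subseteq V\subseteq K$ dominating $D$ (such a $V$ exists, as a maximal element under domination of the local overrings of $D$ inside $K$). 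Then $J\subseteq M\subseteq M_V$, so $JV$ is a nonzero, finitely generated, \emph{proper} ideal of the valuation domain $V$; recalling from the Introduction that in a valuation domain every nonzero finitely generated ideal is principal and hence a $t$-ideal, we obtain $(JV)^t=JV\subsetneq V$. Since $J^t=D$ while $(JV)^t\neq V$, the overring $V$ is not $t$-linked over $D$, so $D$ is not $t$-linkative, as required.

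The main obstacle is exactly this construction: one must manufacture an overring witnessing the failure of $\GV(D)=\{D\}$, and the obvious candidates fail. Localizations (indeed all flat overrings) are always $t$-linked, because $J^{-1}D_S=D_S$ for finitely generated $J$, so they cannot detect a proper $\GV$-ideal; and the endomorphism overring $(J:J)$ collapses to $D$, since $xJ\subseteq J\subseteq D$ gives $x\in J^{-1}=D$. The decisive observation is that a valuation overring \emph{dominating} $M$ is compelled to pull $J$ into its maximal ideal, so that $JV$ becomes proper and finitely generated, hence $t$-closed, which is precisely what breaks the $t$-linked condition. Once the dominating valuation ring is produced, the remainder is the routine valuation-theoretic fact quoted from the Introduction, and the forward direction together with the earlier $DW$/$\GV$ characterizations completes the equivalence.
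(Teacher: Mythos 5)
Your proof is correct. The paper itself gives no real argument here: it simply declares the proposition a straightforward consequence of Theorem \ref{t-inv} (the Dobbs--Houston--Lucas--Zafrullah equivalence), specifically the equivalence of ``every overring is $t$-linked'' with ``every maximal ideal is a $t$-ideal'', which for a local domain is exactly the statement. What you have done is supply a self-contained proof of precisely those implications of the cited theorem, specialized to the local case. Your forward direction, via the dictionary $t$-local $\Leftrightarrow$ local $DW$ $\Leftrightarrow$ $\GV(D)=\{D\}$ from Corollary \ref{w=t} and the preceding proposition, together with the observation that the hypothesis ``$J$ finitely generated with $J^t=D$'' is synonymous with ``$J\in\GV(D)$'', is clean and correct. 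Your converse, extracting a proper $\GV$-ideal $J\subseteq M$ from $M^t=D$ and then feeding it into a valuation overring $V$ dominating $(D,M)$ so that $JV$ becomes a proper finitely generated (hence principal, hence $t$-closed) ideal of $V$, is exactly the mechanism behind the implication (ii)$\Rightarrow$(iii) of the cited theorem; your remarks explaining why flat overrings and $(J:J)$ cannot serve as witnesses are accurate and well taken. The trade-off is the usual one: the paper's route is shorter and leans on the literature, while yours is longer but makes the proposition independent of Theorem \ref{t-inv} and makes visible the role of dominating valuation overrings (which also explains why condition (ii) of that theorem suffices). Both are valid; yours could even serve as a proof of the local case of the quoted theorem.
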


The previous proposition is a straightforward consequence of the following theorem.

\begin{theorem}\label{t-inv} {\emph{(Dobbbs-Houston-Lucas-Zafrullah, 1989 \cite[Theorem 2.6]{dobbs-et-al-1989})}} Let $D$ be an integral domain. The following are equivalent.
\begin{itemize}
\item[(i)] Every overring of $D$ is $t$-linked over $D$.
\item[(ii)] Every valuation overring of $D$ is $t$-linked over $D$.
\item[(iii)]  Every maximal ideal of $D$ is a $t$-ideal.
\item[(iv)]  For each nonzero proper ideal $I$ of $D$, $I^t \neq D$.
\item[(v)]  For each nonzero proper finitely generated ideal $J$ of $D$, $J^t \neq D$.
\item[(vi)]  Each $t$-invertible ideal of $D$ is invertible.
\end{itemize}
\end{theorem}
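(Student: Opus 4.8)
The plan is to organize the six conditions around (iii) as a hub: I would prove the cycle (i) $\Rightarrow$ (ii) $\Rightarrow$ (iii) $\Rightarrow$ (i), and separately append the formal chain (iii) $\Leftrightarrow$ (iv) $\Leftrightarrow$ (v) $\Leftrightarrow$ (vi), which are pure consequences of the $t$-operation machinery. The implication (i) $\Rightarrow$ (ii) is immediate, since valuation overrings form a subclass of all overrings.

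For the formal block, I would first dispatch (iv) $\Rightarrow$ (v) (trivial restriction to finitely generated ideals) and (v) $\Rightarrow$ (iv) (any nonzero proper $I$ with $I^t = D$ contains a finitely generated $F$ with $F^v = F^t = D$, and $F \subseteq I \subsetneq D$ is a proper counterexample to (v)). The equivalence (iii) $\Leftrightarrow$ (iv) follows by noting that a nonzero proper ideal $I$ lies in some maximal ideal $M$, so if every $M$ is a $t$-ideal then $I^t \subseteq M^t = M \subsetneq D$; conversely, if $M$ is maximal, then $M \subseteq M^t \subsetneq D$ forces $M^t = M$. For (vi): assuming (iv), a $t$-invertible $I$ has $II^{-1} \subseteq D$ with $(II^{-1})^t = D$, so (iv) forbids $II^{-1}$ from being proper, i.e. $I$ is invertible; conversely, under (vi), a proper finitely generated $J$ with $J^t = J^v = D$ has $J^{-1} = (J^v)^{-1} = D$, so $J = JJ^{-1}$ is $t$-invertible, hence invertible by (vi), forcing $J = D$, a contradiction.

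The substantive step is (ii) $\Rightarrow$ (iii), which I would prove by contraposition, and this is where I expect the main obstacle. If some maximal ideal $M$ is not a $t$-ideal, then $M \subsetneq M^t$ and maximality force $M^t = D$, so there is a finitely generated $J \subseteq M$ with $J^t = J^v = D$. I would then pick a valuation overring $(V, M_V)$ of $D$ dominating the localization $D_M$, so that $M_V \cap D = M$; then $J \subseteq M \subseteq M_V$, hence $JV$ is a nonzero finitely generated, therefore principal, proper ideal of $V$, giving $(JV)^t = JV \subsetneq V$ while $J^t = D$. Thus $V$ is not $t$-linked over $D$, contradicting (ii). The only nonroutine ingredient, and the crux of the argument, is the invocation of the existence of a valuation overring dominating $D_M$ with the prescribed center (standard valuation theory), together with the fact that finitely generated ideals of a valuation domain are principal, hence $t$-ideals.

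Finally, (iii) $\Rightarrow$ (i) becomes nearly formal once one recognizes, via the earlier characterization of $DW$-domains, that condition (iii) is equivalent to $\GV(D) = \{D\}$. Indeed, for a nonzero finitely generated ideal $J$ the relation $J^t = D$ is equivalent to $J^{-1} = D$, i.e. to $J \in \GV(D)$; under (iii) the only such $J$ is $J = D$, so the defining requirement of $t$-linkedness, namely $(JT)^t = T$ whenever $J^t = D$, holds vacuously (only for $J = D$, where $(DT)^t = T^t = T$) for every overring $T$. This closes the cycle and completes the equivalence.
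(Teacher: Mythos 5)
The paper does not actually prove this theorem: it is imported verbatim from the cited reference (Dobbs--Houston--Lucas--Zafrullah, Theorem 2.6), so there is no in-paper argument to compare yours against. Judged on its own, your proof is correct and complete. The formal block (iii)$\Leftrightarrow$(iv)$\Leftrightarrow$(v)$\Leftrightarrow$(vi) is routine and your handling of it is sound; in particular the equivalence $J^v=D \Leftrightarrow J^{-1}=D$ for an integral ideal $J$ (used in the (vi)$\Rightarrow$(v) direction and again in (iii)$\Rightarrow$(i)) holds because $1\in (J^{-1})^{-1}=D$ forces $J^{-1}\subseteq D$, while $J\subseteq D$ gives the reverse inclusion. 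Your (ii)$\Rightarrow$(iii) by contraposition is the genuinely substantive step and is handled correctly: the existence of a valuation overring dominating $D_M$ is standard, and a finitely generated ideal of a valuation domain is principal, hence a $t$-ideal, so $(JV)^t=JV\subsetneq V$ while $J^t=D$, defeating $t$-linkedness. A nice economy of your write-up is that (iii)$\Rightarrow$(i) is reduced to the identity $\GV(D)=\{D\}$, which the paper has already established as equivalent to (iii) in its $DW$-domain proposition, making the $t$-linked condition vacuous for every overring. No gaps.
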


\medskip

Finally, we introduce a construction for building new examples of $t$-local domains. 

We recall that, given an integral domain $D$,  the {\it Nagata ring of $D$} (see, for instance, \cite[Section 33]{[G]}) is defined as follows:

$$
D(X) := \{f/g \mid f, g\in D[X],\ g \neq 0, \mbox{ with } \co(g) = D\},
$$
(where $\co(h)$ is the content of a polynomial $h\in D[X]$). 

First in \cite{[K-89]} and then in \cite{[FL]}, the construction of the Nagata ring was extended to the case of an arbitrary chosen star (or, even semistar) operation. Given a star operation $\ast$ on $D$, set:
$$
\Na(D, \ast) := \{f/g \mid f, g\in D[X],\ g \neq 0, \mbox{ with } \co(g)^\ast = D\}.
$$
With this notation $\Na(D, d) = D(X)$. Moreover, it is clear that  
$$
\Na(D, v) = \Na(D, t) = \Na(D, w)
$$ 
since, for each nonzero finitely generated ideal $F$ of $D$,  $F^v = F^t$ and, moreover, $F^t =D$ if and only if $F^w=D$, because   $\Max^t(D)= \Max^w(D)$.

\begin{proposition} \ Let $D$ be an integral domain. 
\begin{itemize}
\item[(1)]  The Nagata ring  $\emph{\Na}(D, v)$ is a $DW$-domain; in particular, if $\Max^t(D) =\{Q\}$ is a singleton, then  $\emph{\Na}(D, v)$ is a $t$-local-domain with maximal $t$-ideal $Q\emph{\Na}(D, v)$.
\item[(2)]
The following are equivalent.
\begin{itemize}
\item[(i)] $D$ is a $t$-local  domain.
\item[(ii)] ${\emph{\Na}}(D, v) =D(X)$ and $D(X)$ is local.
\item[(iii)] $D(X)$ is a $t$-local domain.
\end{itemize}
\end{itemize}
\end{proposition}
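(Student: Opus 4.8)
The plan is to prove the two parts separately, relying heavily on the fundamental identities $\Na(D,v) = \Na(D,t) = \Na(D,w)$ and the fact that the maximal ideals of $\Na(D,w)$ are exactly the extensions $Q\Na(D,w)$ with $Q \in \Max^t(D)$, which is the key structural fact about star-operation Nagata rings established in \cite{[K-89]} and \cite{[FL]}. For part (1), I would first recall that $\Na(D,v)$ is a localization of $D[X]$, and that its maximal ideals are precisely $\{Q\Na(D,v) \mid Q \in \Max^t(D)\}$. To show $\Na(D,v)$ is a $DW$-domain, by the Proposition characterizing $DW$-domains it suffices to check that every maximal ideal is a $t$-ideal (condition (iv) there). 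This should follow from the good behaviour of the $t$- and $w$-operations under passage to the Nagata ring: the contraction/extension correspondence between maximal $t$-ideals of $D$ and maximal ideals of $\Na(D,w)$ forces each maximal ideal of the Nagata ring to be a $w$-ideal, hence a $t$-ideal there. The ``in particular'' clause is then immediate, since if $\Max^t(D) = \{Q\}$ is a singleton, $\Na(D,v)$ has the unique maximal ideal $Q\Na(D,v)$, so it is local, and being a $DW$-domain it is $t$-local by Corollary \ref{w=t}.

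For part (2), I would prove the cycle of implications (i)$\Rightarrow$(ii)$\Rightarrow$(iii)$\Rightarrow$(i). For (i)$\Rightarrow$(ii), suppose $D$ is $t$-local with maximal ideal $M$. Then $\Max^t(D) = \{M\}$, so a polynomial $g$ satisfies $\co(g)^v = D$ if and only if $\co(g) \nsubseteq M$, i.e., if and only if $\co(g) = D$ (since $M$ is the unique maximal ideal and $\co(g)^t = D$ forces $\co(g)$ into no maximal $t$-ideal, and here $\Max^t(D) = \Max(D) = \{M\}$). This gives exactly $\Na(D,v) = D(X)$. Moreover, by part (1) applied to the singleton case, $D(X) = \Na(D,v)$ is local (with maximal ideal $M D(X)$). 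For (ii)$\Rightarrow$(iii): given $\Na(D,v) = D(X)$ and $D(X)$ local, part (1) tells us $\Na(D,v)$ is a local $DW$-domain, hence $t$-local by Corollary \ref{w=t}, so $D(X)$ is $t$-local.

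The remaining and most delicate implication is (iii)$\Rightarrow$(i): assuming $D(X)$ is $t$-local, I must descend to conclude $D$ is $t$-local. The natural route is to show $D$ is local with maximal ideal a $w$-ideal (then invoke Corollary \ref{w=t}). Since $D(X)$ is local, it has a single maximal ideal, and using the correspondence $\Max(D(X)) \leftrightarrow \{M\emph{D}(X) \mid M \in \Max(D)\}$ for the ordinary Nagata ring, I would deduce $D$ has a unique maximal ideal $M$. The harder point is verifying $M$ is a $t$-ideal of $D$: here I would exploit that $M D(X)$, the maximal ideal of the $t$-local ring $D(X)$, is a $t$-ideal of $D(X)$, and pull this back to $D$ via the known compatibility of the $w$/$t$-operation with the Nagata construction, namely $(ID(X)) \cap D$ controls whether $I$ is a $w$-ideal, together with $\Max^t(D) = \Max^w(D)$. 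The main obstacle I anticipate is justifying this descent of the $t$-ideal property cleanly—specifically, confirming that the $t$-locality of $D(X)$ forces $\Max^t(D)$ to be a singleton rather than merely giving local information at one Nagata-maximal ideal—and I would address it by combining the contraction result $J \cap D$ is a $t$-ideal when $J$ is \cite[Lemma 3.17(1)]{[K-89]} with the identification of maximal ideals of $\Na(D,w)$, ensuring the unique maximal ideal of $D$ is recovered as a $t$-ideal.
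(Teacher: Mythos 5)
Your proposal is correct and follows essentially the same route as the paper: part (1) via the identification $\Max(\Na(D,v)) = \{Q\Na(D,v) \mid Q \in \Max^t(D)\}$ combined with the characterization of $DW$-domains by every maximal ideal being a $t$-ideal, and part (2) via the same cycle of implications, the paper merely phrasing (i)$\Rightarrow$(ii) as $d=w$ forcing $\Na(D,w)=\Na(D,d)=D(X)$ instead of your direct content-ideal argument. One small point to tighten in (iii)$\Rightarrow$(i): \cite[Lemma 3.17(1)]{[K-89]} concerns rings of fractions of $D$ itself, whereas $D(X)$ is a ring of fractions of $D[X]$, so the clean tool is the compatibility $(MD(X))^t = M^tD(X)$ from \cite[Corollary 2.3]{[K-89]} --- which you also invoke, and which the paper uses both here (to get $M=M^t$ at once by contracting to $D$) and in part (1) to see that each $Q\Na(D,v)$ is a $t$-ideal, a step your part (1) leaves slightly implicit since the maximal-ideal correspondence alone does not yield it.
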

\begin{proof}
(1) 
Recall that $\boldsymbol{\mathcal N} := \{ g\in D[X] \mid  g \neq 0 \mbox{ and } \co(g)^\ast = D\}$ is a saturated multiplicatively closed subset of $D[X]$, 
$\boldsymbol{\mathcal N} = D[X] \setminus \left( \bigcup\{QD[X] \mid Q \in \Max^{\stf}(D)\}\right)$, $\Na(D, v) = D[X]_{\boldsymbol{\mathcal N}}$, and $\Max(\Na(D, v)) =
 \{ Q{\Na(D, v)}  \mid Q \in  \Max^t(D)\}$ (see \cite[Proposition 3.1]{[FL]} or \cite[Proposition 2.1]{[K-89]}). Then, it is easy to see that $\Na(D, v) _{Q\Na(D, v)} $ $= D[X]_{QD[X]} =D_Q(X)$ and $Q\Na(D, v) = QD_Q(X) \cap \Na(D, v)$, for each $Q \in \Max^{t}(D)$, and so:
 $$
\Na(D, v) = \bigcap \{ D_Q(X) \mid Q \in \Max^t(D)\}.$$
 Moreover, for each ideal $I$ of $D$, $(I\Na(D,v))^t =I^t\Na(D,v)$ \cite[Corollary 2.3]{[K-89]}. Therefore, in particular,  $Q\Na(D, v)$ is a $t$-ideal 
of $\Na(D,v)$ for each  $Q\in \Max^t(D)$, i.e., $\Max(\Na(D, v)) = \Max^t(\Na(D, v))$.

(2) (i)$\Rightarrow$(ii). We already observed that $\Na(D, v) = \Na(D, t) = \Na(D, w)$. In the present situation $d = w$ and so $\Na(D, w) = \Na(D, d) =D(X)$.

(ii)$\Rightarrow$(iii).  Obvious, since we have shown in (1) that, when $D$ is $t$-local, $\Na(D, v)$ is    $t$-local too. 

(iii)$\Rightarrow$(i) Since the maximal ideals of $D(X)$ are exactly the ideals $M(X):= MD(X)$, with $M \in \Max(D)$ \cite[Proposition 33.1]{[G]}, and since $M(X)^t= M^t(X)$ \cite[Corollary 2.3]{[K-89]}, the conclusion is straightforward.
\end{proof}

By the previous proposition, the Nagata ring can be used to give new examples of $DW$-domains and, in particular, of $t$-local domains.
 For instance, it is known that $D(X)$ is treed  (i.e., the prime spectrum is a tree under the set theoretic inclusion $\subseteq$) if and only if $D$ is treed and the integral closure $\overline{D}$ of $D$ is a Pr\"ufer domain \cite[Theorem 2.10]{[ADF]}. 
 Thus, if we take a treed domain $D$ such that $\overline{D}$ is not Pr\"ufer, in this case $D(X)$ is a  $DW$-domain,  but not treed.
For an explicit example, take 
 $D:=\mathbb Q+ U\mathbb Q(V)[[U]]$, where $U$ and $V$ are two indeterminates, then $D= \overline{D}$   \cite[Remark 2.11]{[ADF]}, $D$ is a $t$-local (treed) integrally closed domain but not a valuation domain, and thus $D(X)$ is a $t$-local non treed integrally closed domain, since the integral closure $\overline{D(X)}=\overline{D}(X)=D(X)$   \cite[Proposition 2.6]{[ADF]}.

\medskip

\section{Comparable elements and $t$-local domains}

A {\sl nonzero} element $c\in D$ is called {\it comparable in} $D$ if, for all $x\in D$,
we have $cD\subseteq xD $ or $xD\subseteq cD$. It is easy to see that $c\in D$ is comparable if $cD$ is comparable (under inclusion) with each ideal $I$ of $D$.
The following result is essentially Lemma 3.2 of \cite{[CDZ]}.

\begin{lemma} Let $\alpha$ be a nonzero nonunit element of a local domain $(D,M)$. If,  for each $x\in D$, $\alpha D + xD =yD \subseteq M$, then $\alpha$ is a comparable element.
\end{lemma}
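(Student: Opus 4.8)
The plan is to verify the defining property of a comparable element directly: for an arbitrary $x \in D$ I must produce either $\alpha D \subseteq xD$ or $xD \subseteq \alpha D$. First I would dispose of the case in which $x$ is a unit, where $xD = D \supseteq \alpha D$ and there is nothing to check; so I may assume $x \in M$. Note that $x \in M$ is exactly the range for which the hypothesis is consistent, since $\alpha \in M$ forces $\alpha D + xD \subseteq M$ precisely when $x \in M$, whereas a unit $x$ would give $\alpha D + xD = D \not\subseteq M$.

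Fix such an $x$ and let $y$ be the generator supplied by the hypothesis, so $\alpha D + xD = yD$. Since $\alpha \neq 0$ lies in $yD$, the generator $y$ is nonzero. From $\alpha, x \in yD$ I can write $\alpha = ya$ and $x = yb$ for some $a, b \in D$, while the reverse inclusion $y \in yD = \alpha D + xD$ furnishes a representation $y = \alpha s + x t$ with $s, t \in D$.

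The key step is to substitute and cancel: $y = (ya)s + (yb)t = y(as + bt)$, and since $D$ is a domain with $y \neq 0$ I may cancel $y$ to obtain $1 = as + bt$, that is $aD + bD = D$. Now I invoke locality: if both $a$ and $b$ lay in $M$, then $aD + bD \subseteq M \subsetneq D$, a contradiction, so at least one of $a, b$ is a unit. If $a$ is a unit, then $\alpha = ya$ gives $\alpha D = yD \supseteq xD$; if $b$ is a unit, then $x = yb$ gives $xD = yD \supseteq \alpha D$. In either case $\alpha D$ and $xD$ are comparable, which is what I want.

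I do not anticipate a genuine obstacle; the argument is short and elementary. The only points needing care are confirming $y \neq 0$ before cancelling and treating the unit case $x \notin M$ separately (there the hypothesis does not apply, but the conclusion is automatic). The heart of the proof is the passage from the single B\'ezout-type relation $y = \alpha s + x t$, via cancellation of the common generator $y$, to the comaximality $aD + bD = D$ of the cofactors, after which the locality of $D$ forces one cofactor to be a unit and hence one of $\alpha D$, $xD$ to coincide with $yD$ and contain the other.
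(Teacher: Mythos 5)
Your proof is correct and follows essentially the same route as the paper's: dividing the relation $\alpha D + xD = yD$ by $y$ (which you carry out explicitly via the cancellation $y = y(as+bt)$), then using locality to conclude that one of the cofactors is a unit, so that $y$ is an associate of $\alpha$ or of $x$. Your version is just a more detailed rendering of the same argument, with the added (and reasonable) observation that the hypothesis can only apply to $x \in M$, the unit case being trivial.
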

\begin{proof} By the assumption, it follows that  $(\alpha/y) D + (x/y) D =D$ and, since $D$ is local, $\alpha/y $ or $x/y$ is a unit of $D$. Thus, the element $y$ is an  associate of $\alpha$ or of $x$. In the first case, $y | x$ (or, equivalently, $\alpha | x$) and, in the second case, $y | \alpha$ (or, equivalently, $ x | \alpha$). Therefore, $\alpha$ is a comparable element of $D$.
\end{proof}

\begin{lemma} \label{factor} Let $c$ be a comparable element in an integral domain $D$. If $h$ is a nonunit factor of $c$, then $h$ is also a comparable element of $D$.
\end{lemma}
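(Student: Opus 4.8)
The plan is to reduce the comparability of $h$ to that of $c$ by clearing the complementary factor. Write $c = hk$ with $k \in D$; since $c \neq 0$, both $h$ and $k$ are nonzero, so it is meaningful to test whether $h$ is comparable. The key point is that one should \emph{not} compare $c$ directly with an arbitrary $x$, but rather with the product $xk$, because the common factor $k$ can then be cancelled in the integral domain $D$.

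First I would fix an arbitrary $x \in D$ and apply the defining property of the comparable element $c$ to the element $xk \in D$: this gives either $cD \subseteq xkD$ or $xkD \subseteq cD$. In the first case $hk = c \in xkD$, so $hk = xkd$ for some $d \in D$; cancelling the nonzero factor $k$ (this is exactly where the domain hypothesis is used) yields $h = xd$, whence $hD \subseteq xD$. In the second case $xk \in cD = hkD$, so $xk = hkd'$ for some $d' \in D$, and cancelling $k$ gives $x = hd'$, whence $xD \subseteq hD$. Either way $hD$ and $xD$ are comparable, and since $x$ was arbitrary, $h$ is a comparable element of $D$.

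There is essentially no obstacle once the right element $xk$ is selected; the only subtlety, and the step I would flag, is the decision to multiply $x$ by $k$ before invoking comparability of $c$. Comparing $c$ with $x$ itself settles cleanly only the case $c \mid x$ (where $h \mid c \mid x$ immediately gives $xD \subseteq hD$) and leaves the opposite case stuck, whereas multiplying by $k$ makes both alternatives symmetric and reduces each to a single cancellation in the domain. Note also that the hypothesis that $h$ be a \emph{nonunit} is not needed for the argument: it only guarantees that $h$ is a genuine proper factor, the comparability itself requiring merely that $h \neq 0$.
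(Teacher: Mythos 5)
Your proof is correct and is essentially the paper's argument: the paper also writes $c=hy$ and compares $cD$ with $xyD$ (packaged as the observation that $cD+xyD=y(hD+xD)$ must equal one of the two summands), then cancels $y$ to conclude $x\mid h$ or $h\mid x$. Your remark that the nonunit hypothesis is superfluous is also accurate.
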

\begin{proof} Let $c= hy$ and let $x \in D$.  Then $cD + xyD = hyD+xyD= y(hD+xD)$ coincides with $cD$ or $xyD$, since $c$ is comparable. In the first case, 
$y(hD+xD)=cD= yhD$, thus $hD +xD = hD$, i.e.,  $x|h$. In the second case, $y(hD+xD) =xyD$ and thus $hD+xD =xD$, i.e., $h|x$.
\end{proof}

The comparable elements were introduced and studied in \cite{[AZ]} to prove,  in case of valuation domains, a Kaplansky-type
theorem (recall that Kaplansky proved that  an integral domain $D$ is a UFD if and only if every
nonzero prime ideal of $D$  contains a prime element   \cite[Theorem 5]{[Kap]}).

\begin{lemma}\label{Kap}{\em{(D.D. Anderson and M. Zafrullah \cite[Theorem 3]{[AZ]})}} 
An integral domain $D$ is a valuation domain if and only if every
nonzero prime ideal of $D$ contains a comparable element. 
\end{lemma}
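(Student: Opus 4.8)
The plan is to prove both implications separately, using the comparable elements to build up a valuation structure on $D$. For the easy direction, suppose $D$ is a valuation domain; then for any nonzero prime $P$ we may pick any nonzero nonunit $c\in P$, and since in a valuation domain the principal ideals are totally ordered under inclusion, $c$ is trivially a comparable element lying in $P$. So every nonzero prime contains a comparable element.

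For the substantive converse, assume every nonzero prime ideal of $D$ contains a comparable element. The goal is to show the set of principal ideals of $D$ is linearly ordered (equivalently, that for all nonzero $a,b\in D$ one has $a\mid b$ or $b\mid a$), which is the standard characterization of a valuation domain. First I would reduce to a local question: the existence of a comparable element already forces strong structure, and I expect the presence of a comparable element to make $D$ local with comparable spectrum. Indeed, by Lemma~\ref{factor} every nonunit factor of a comparable element is again comparable, so comparability propagates downward through factorizations. The key step is to show that $D$ has a comparable element (not merely inside some prime): take any nonzero nonunit $a\in D$, let $P$ be a minimal prime over $a$, pick a comparable element $c\in P$, and argue — using minimality of $P$ and the comparability of $c$ with powers relating $a$ and $c$ — that $a$ itself is forced to be comparable, or at least comparable with $c$.

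Once I know $D$ possesses a nonzero nonunit comparable element $c$, the structural results surveyed in the paper (a comparable element makes $D$ into a $t$-local domain, and the principal ideals become linearly ordered near $c$) take over. The heart of the argument will be a bootstrapping step: given arbitrary nonzero $x,y\in D$, I want to show $xD\subseteq yD$ or $yD\subseteq xD$. I would locate a common comparable ``reference'' element by descending into a suitable minimal prime containing both, extracting a comparable $c$ there, and then comparing each of $x,y$ with $c$ via the defining property $cD\subseteq xD$ or $xD\subseteq cD$. Combining the two comparisons, together with Lemma~\ref{factor} to pass between $c$ and its relevant factors, should pin down the inclusion between $xD$ and $yD$.

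The hard part will be the passage from ``every prime contains a comparable element'' to ``$D$ itself has a comparable element that controls the whole domain,'' i.e.\ promoting the local, prime-by-prime hypothesis into a global linear order on all principal ideals. The delicate point is that a comparable element $c\in P$ is comparable with \emph{every} ideal of $D$ (not just those inside $P$), so its comparability is genuinely global; the difficulty lies in choosing, for a given pair $x,y$, a single prime whose comparable element simultaneously resolves both $x$ and $y$ in a compatible way, and in ruling out the possibility that different primes supply incompatible comparability data. I expect this to require a careful use of minimal primes over products or over $xD+yD$, exploiting that comparable elements force the ideals they meet to be linearly ordered, so that no incompatibility can arise.
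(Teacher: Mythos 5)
First, a point of comparison: the paper does not prove this lemma at all --- it is quoted from Anderson--Zafrullah \cite[Theorem 3]{[AZ]} and used as a black box --- so your attempt has to be judged on its own merits. Your forward direction is fine. The converse, however, has a genuine gap, and it sits exactly where you locate the ``heart of the argument.'' Two specific problems. (a) Your announced ``key step'' --- producing a nonzero nonunit comparable element of $D$ --- is both trivial (any nonzero prime supplies one directly from the hypothesis) and far too weak: by Proposition~\ref{B} one comparable element only forces $D$ to be $t$-local, and the paper exhibits many $t$-local non-valuation domains with comparable elements. (b) The bootstrapping step fails in its main case. Given nonzero $x,y$ and a comparable $c$, the definition yields $cD\subseteq xD$ or $xD\subseteq cD$, and likewise for $y$; but when $xD\subseteq cD$ and $yD\subseteq cD$ (the generic case, since comparable elements tend to sit ``above'' everything non-comparable) you learn nothing about the relation between $xD$ and $yD$. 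Concretely, in $D=\mathbb{Z}_{(p)}+(X,Y)\mathbb{Q}[\![X,Y]\!]$ the element $p$ is comparable, every comparable element $c$ satisfies $XD\subseteq cD$ and $YD\subseteq cD$, and yet $X$ and $Y$ are incomparable. This domain does not satisfy the hypothesis of the lemma, but it shows that ``combining the two comparisons'' is not a valid inference, and your proposal never says where the hypothesis gets applied to a \emph{second} prime to rule such configurations out.

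What actually closes the gap is applying the hypothesis to the right prime, and there are two standard ways to do it. One is Kaplansky's multiplicative-set trick: the set $\mathscr{C}$ of nonzero comparable elements is a \emph{saturated} multiplicative set (saturation is your Lemma~\ref{factor}; multiplicativity is a short check), so if some nonzero nonunit $a$ were not comparable then $aD\cap\mathscr{C}=\emptyset$, and an ideal containing $aD$ maximal with respect to missing $\mathscr{C}$ is a nonzero prime containing no comparable element --- contradiction; hence every nonzero element is comparable and $D$ is a valuation domain. The other route uses the machinery the paper does develop: by Lemma~\ref{C}(1), $P:=\bigcap\{cD\mid c\in\mathscr{C}\}$ is a prime with $D\setminus P=\mathscr{C}$; if $P\neq(0)$ the hypothesis forces $P$ to contain a comparable element, contradicting $P\cap\mathscr{C}=\emptyset$; hence $P=(0)$ and Corollary~\ref{CA} gives the conclusion. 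Either way, the decisive move is that the set of \emph{non}-comparable elements is swallowed by a single prime disjoint from $\mathscr{C}$ --- a step absent from your outline.
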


An important part
of the result was the proof of the fact that the set of all comparable
elements of $D$ is a saturated multiplicative set.

We recall in the next lemma some of the consequences of the existence of a nonzero nonunit comparable element in an integral domain.

\begin{lemma}\label{C} {\em{(Gilmer-Mott-Zafrullah \cite[Theorem 2.3]{[GMZ]})} }
Suppose the integral domain $D$
contains a nonzero nonunit comparable element and let $\mathscr{C}$ be the (nonempty) set of
nonzero comparable elements of $D$. Then:
\begin{itemize}
\item[(1)] $P:=\bigcap \{cD \mid c\in \mathscr{C}\}$ is a prime ideal of $D$ and $D\setminus  P=\mathscr{C}$ (in particular, $\mathscr{C}$ is a saturated multiplicative set of $D$).

\item[(2)] $D/P$ is a valuation domain.

\item[(3)] $P=PD_{P}$.

\item[(4)] $D$ is  local, $P$ compares with every other ideal of $D$
under inclusion, and 
$\dim(D)= \dim(D/P) + \dim(D_{P})$.

\item[(5)]  If $T$ is any integral domain such that there is a nonmaximal
prime ideal $Q$ of $T$ such that {\em (a)} $T/Q$ is a valuation domain, and {\em (b)} 
$Q=QT_{Q}$, then each element of $T\setminus Q$ is comparable.

\item[(6)]
 If, in
addition, $Q$ is minimal in $T$ with respect to properties {\em (5, a)} and {\em (5, b)} above, then 
$T \setminus Q$ is   precisely the set of nonzero comparable elements of $T$.
\end{itemize}
\end{lemma}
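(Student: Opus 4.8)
The plan is to prove the six assertions in sequence, concentrating the real work in part~(1); once part~(1) is in hand, parts~(2)--(4) are almost formal and parts~(5)--(6) follow by bootstrapping. For part~(1), I would first show that $\mathscr{C}$ is a saturated multiplicative set. Saturation is immediate from Lemma~\ref{factor} (a nonunit factor of a comparable element is comparable, and a unit is comparable and nonzero). For multiplicativity, given $c,c'\in\mathscr{C}$ and $x\in D$, I would compare $c'D$ with $xD$: if $c'D\subseteq xD$ then $cc'D\subseteq cxD\subseteq xD$ at once, while if $xD\subseteq c'D$, writing $x=c'y$ and comparing $cD$ with $yD$ yields either $cc'D\subseteq xD$ or $xD\subseteq cc'D$. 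I would then identify $P=D\setminus\mathscr{C}$: no $c\in\mathscr{C}$ lies in $P$, since $c\in P\subseteq c^2D$ (using $c^2\in\mathscr{C}$) forces $c$ to be a unit, and units avoid $P$ once a nonunit comparable element is present; conversely, if $x\notin P$ then $x\notin cD$ for some $c\in\mathscr{C}$, so comparability gives $cD\subseteq xD$, i.e. $x\mid c$, whence $x\in\mathscr{C}$ by Lemma~\ref{factor}. Since $P$ is an intersection of principal ideals it is an ideal, and being the complement of the multiplicative set $\mathscr{C}$ it is a prime ideal.

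For part~(2), two nonzero elements of $D/P$ lift to members of $\mathscr{C}$, and their comparability descends to divisibility in $D/P$, so $D/P$ is a valuation domain. For part~(3), if $p\in P$ and $s\in\mathscr{C}$ then $p\in P\subseteq sD$, so $p=sq$ with $q\in D$; primeness of $P$ together with $s\notin P$ gives $q\in P$, hence $p/s\in P$ and $PD_P\subseteq P$, so $P=PD_P$. For part~(4), I would first note that $P$ compares with every ideal $I$ (if $I\not\subseteq P$, pick $x\in I\cap\mathscr{C}$, so $P\subseteq xD\subseteq I$); localness then follows by pulling back the unique maximal ideal of the valuation domain $D/P$ and using this comparability, and the dimension formula follows because every prime is comparable with $P$, so each chain of primes splits at $P$ into an initial part contained in $P$ (of length at most $\hgt P=\dim D_P$) and a final part containing $P$ (of length at most $\dim(D/P)$).

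For parts~(5)--(6), the condition $Q=QT_Q$ says precisely that $Q$ is divided, i.e. $Q\subseteq xT$ for every $x\notin Q$. Given $s\notin Q$ and $x\in T$, if $x\in Q$ then $x\in Q\subseteq sT$; if $x\notin Q$, I would compare $\bar s$ and $\bar x$ in the valuation domain $T/Q$ and absorb the resulting $Q$-valued error term via dividedness, obtaining $sT\subseteq xT$ or $xT\subseteq sT$; this proves~(5). For~(6), part~(5) gives $T\setminus Q\subseteq\mathscr{C}(T)$, and applying parts~(1)--(3) to $T$ produces the prime $P_T:=\bigcap\{cT\mid c\in\mathscr{C}(T)\}$ with $\mathscr{C}(T)=T\setminus P_T$, with $T/P_T$ a valuation domain and $P_T=P_T T_{P_T}$; thus $P_T\subseteq Q$ and $P_T$ satisfies (5,a)--(5,b) and is nonmaximal (being contained in the nonmaximal $Q$), so minimality of $Q$ forces $P_T=Q$ and hence $\mathscr{C}(T)=T\setminus Q$. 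The step I expect to be the main obstacle is the multiplicativity of $\mathscr{C}$ together with the clean identification $P=D\setminus\mathscr{C}$ in part~(1)---this is the genuine content of the lemma, after which everything is largely bookkeeping; the only other subtle point is recognizing in part~(5) that $Q=QT_Q$ is exactly the dividedness needed to clear the error term coming down from the valuation quotient $T/Q$.
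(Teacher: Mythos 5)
The paper does not prove this lemma at all: it is quoted verbatim from Gilmer--Mott--Zafrullah \cite[Theorem 2.3]{[GMZ]} with no proof supplied, so there is nothing in the paper to compare your argument against. On its own merits your proof is correct and complete. The key steps all check out: multiplicativity of $\mathscr{C}$ via the two-case comparison of $c'D$ with $xD$; the identification $D\setminus P=\mathscr{C}$ (the direction $\mathscr{C}\cap P=\emptyset$ via $c\in P\subseteq c^2D\Rightarrow c$ a unit, and the converse via saturation); the descent of comparability to $D/P$; the absorption $sq\in P,\ s\notin P\Rightarrow q\in P$ for $P=PD_P$; the observation that $P$ is never maximal (else every $c_0\in\mathscr{C}$ nonunit would become a unit modulo $P\subseteq c_0D$), which is what makes the pullback-of-the-maximal-ideal argument for localness and the chain-splitting dimension count go through; and in (5) the correct reading of $Q=QT_Q$ as $Q\subseteq sT$ for all $s\notin Q$, used to clear the error term coming down from $T/Q$. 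The bootstrap in (6), applying (1)--(3) to $T$ to produce $P_T\subseteq Q$ and then invoking minimality, is exactly the right way to close the loop. This is a clean, self-contained reconstruction of the cited result, consistent with how the paper later uses parts (5) and (6) (e.g.\ in the proof of Theorem \ref{D}).
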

 
Of course, an integral domain $D$ is a valuation domain if and only if every nonzero element of 
$D$ is comparable. As an easy consequence of the previous lemma we obtain immediately the following.

\begin{corollary} \label{CA}  Suppose the integral domain $D$
contains a nonzero nonunit comparable element and let $\mathscr{C}$ be the (nonempty) set of
nonzero comparable elements of $D$. 
Then, $D$ is a valuation domain if and
only if $\cap \{cD \mid c\in \mathscr{C}\}=(0)$.
\end{corollary}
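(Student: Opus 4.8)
The plan is to derive Corollary \ref{CA} directly from Lemma \ref{C}, which already does almost all of the work. Recall that an integral domain is a valuation domain if and only if every nonzero element is comparable, equivalently, if and only if the set $\mathscr{C}$ of nonzero comparable elements equals $D\setminus\{0\}$. The quantity $P := \bigcap\{cD \mid c \in \mathscr{C}\}$ from Lemma \ref{C}(1) is precisely the obstruction: by part (1), $D\setminus P = \mathscr{C}$, so $P = (0)$ is equivalent to $\mathscr{C} = D\setminus\{0\}$, which is equivalent to $D$ being a valuation domain. This identification is the heart of the argument.

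First I would prove the forward implication. Suppose $D$ is a valuation domain. Then every nonzero nonunit of $D$ is comparable, so $\mathscr{C} = D\setminus\{0\}$, and therefore $\bigcap\{cD \mid c\in\mathscr{C}\} \subseteq \bigcap\{cD \mid 0\neq c\in D\}$. Since a valuation domain is a (one-dimensional or higher) domain whose intersection of all nonzero principal ideals is $(0)$ — indeed $\bigcap_{0\neq c}cD=(0)$ holds because if $0\neq x$ lay in every $cD$, then $x$ would be divisible by arbitrarily ``large'' elements, contradicting that $x/c\in D$ cannot hold for $c$ a proper multiple of $x$ — we conclude $P=(0)$. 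Alternatively, and more cleanly, I would simply invoke Lemma \ref{C}(1): for a valuation domain, $\mathscr{C}=D\setminus\{0\}$, hence $D\setminus P = \mathscr{C}=D\setminus\{0\}$ forces $P=(0)$.

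For the converse, suppose $\bigcap\{cD \mid c\in\mathscr{C}\} = (0)$. By Lemma \ref{C}(1), this intersection is exactly the prime ideal $P$ with $D\setminus P = \mathscr{C}$. Thus $P=(0)$ gives $\mathscr{C} = D\setminus\{0\}$, i.e., every nonzero element of $D$ is comparable, which is precisely the statement that $D$ is a valuation domain. I do not anticipate a serious obstacle here: the entire content is packaged in Lemma \ref{C}(1), and the corollary is essentially a reformulation of the equation $D\setminus P = \mathscr{C}$. The only point requiring a line of care is confirming, in the forward direction, that ``valuation domain'' indeed yields $\mathscr{C}=D\setminus\{0\}$ (every nonzero element comparable), which is the characterization of valuation domains via comparability of all elements noted just before the corollary statement.
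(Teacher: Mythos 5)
Your proof is correct and follows essentially the same route as the paper, which simply derives the corollary from Lemma \ref{C}. The only (immaterial) difference is that for the direction ``$P=(0)$ implies valuation'' the paper invokes Lemma \ref{C}(2) (that $D/P$ is a valuation domain, so $P=(0)$ gives $D\cong D/P$ a valuation domain), whereas you use Lemma \ref{C}(1) together with the characterization of valuation domains as those domains in which every nonzero element is comparable; both are one-line conclusions from the same lemma.
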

\begin{proof} 
The statement follows from (1) and (2) of Lemma \ref{C}.
\end{proof}

Recall that E.D. Davis proved that, given a ring $S$  and a subring $R$, if $R$ is local then $(R,S)$ is a normal pair (i.e., every ring $T$, $R\subseteq T \subseteq S$, is integrally closed in $S$) if and only if there is a prime ideal $Q$ in $R$ such that $S =R_Q$, $Q = QR_Q$, and $R/Q$ is a valuation domain 
\cite [Theorem 1]{[D]}. From the previous remark and Lemma \ref{C}, we deduce immediately the following.

\begin{corollary} Suppose the integral domain $D$
contains a nonzero nonunit comparable element. Let $\mathscr{C}$ be the   set of
nonzero comparable elements of $D$ and $P:=\bigcap \{cD \mid c\in \mathscr{C}\}$, as in  Lemma \ref{C}(1). In this situation, 
$(D,D_P)$ is a normal pair.
\end{corollary}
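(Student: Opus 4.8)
The plan is to combine Davis's characterization of normal pairs (quoted just before the statement) with the structural information about the prime ideal $P$ provided by Lemma \ref{C}. Davis's theorem says that, for a local ring $R$ with subring/overring $S$, the pair $(R,S)$ is normal if and only if there is a prime ideal $Q$ of $R$ with $S=R_Q$, $Q=QR_Q$, and $R/Q$ a valuation domain. So to conclude that $(D,D_P)$ is a normal pair, I need only verify that $P$ plays the role of the prime $Q$ in Davis's criterion, with $S=D_P$.

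First I would record that the hypotheses of Davis's theorem are met: $D$ is local by Lemma \ref{C}(4), and $D_P$ is an overring of $D$, so Davis's equivalence is applicable with $R=D$ and $S=D_P$. Next I would check the three conditions for the candidate prime $Q:=P$. Condition (b), namely $P=PD_P$, is exactly Lemma \ref{C}(3). The requirement that $D/P$ be a valuation domain is exactly Lemma \ref{C}(2). The only remaining point is that $S=D_P$, which holds tautologically by our choice $S=D_P$; there is nothing to prove there. Hence all three conditions of Davis's criterion are satisfied with $Q=P$, and the normal-pair conclusion follows immediately.

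The one subtlety worth flagging — and the step I expect to require the most care — is to make sure $P$ really is the \emph{same} prime that Davis's theorem produces, i.e.\ that taking $S:=D_P$ is consistent with the clause ``$S=R_Q$'' rather than forcing an a priori different localization. Since we are free to choose the overring $S$ in the statement of the corollary, and we choose it to be $D_P$ precisely, this is automatic; the content is entirely carried by Lemma \ref{C}(2) and (3), which supply the two nontrivial hypotheses of Davis's theorem. In short, the proof is a direct application: by Lemma \ref{C}(2) and (3), the prime $P$ satisfies $P=PD_P$ and $D/P$ is a valuation domain, so by Davis's theorem \cite[Theorem 1]{[D]} the pair $(D,D_P)$ is normal.
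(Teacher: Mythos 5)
Your proof is correct and is exactly the argument the paper intends: the corollary is stated as an immediate consequence of Davis's criterion together with parts (2), (3), and (4) of Lemma \ref{C}, which is precisely what you verify. No issues.
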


\medskip

 In \cite{[GMZ]},  a part of the
following result was proved as a consequence of Lemma \ref{C}.
We next prove, directly, that the existence of a nonzero nonunit comparable element in an integral domain is a sufficient but not necessary condition for being a $t$-local domain. 

\begin{proposition}  \label{B}
An integral domain $D$ that contains
a nonzero nonunit comparable element is a $t$-local domain, while a $t$-local domain may not contain a nonzero nonunit comparable element.
\end{proposition}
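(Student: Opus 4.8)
The plan is to treat the two assertions separately: for the first (sufficiency) I would give a direct proof that the maximal ideal is a $t$-ideal, and for the second (necessity fails) I would exhibit a branched $t$-local example and certify that it has no comparable element.

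By Lemma \ref{C}(4), a domain $D$ carrying a nonzero nonunit comparable element $c$ is local; write $M$ for its maximal ideal, so $c \in M$. To see that $D$ is $t$-local it suffices to check $F^{v}\subseteq M$ for every nonzero finitely generated ideal $F=(x_{1},\dots ,x_{n})\subseteq M$. Since $c$ is comparable, for each $i$ either $x_{i}D\subseteq cD$ or $cD\subseteq x_{i}D$. If $x_{i}D\subseteq cD$ for all $i$, then $F\subseteq cD$, so $F=cG$ with $G:=c^{-1}F\subseteq D$ an integral ideal; using the homogeneity of the $v$-operation together with $G^{v}\subseteq D$, one gets $F^{v}=cG^{v}\subseteq cD\subseteq M$. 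Otherwise $cD\subseteq x_{j}D$ for some $j$, i.e. $x_{j}$ is a nonunit factor of $c$ and hence comparable by Lemma \ref{factor}. The comparable generators then generate pairwise comparable principal ideals, so I may pick $x_{m}$ among them with $x_{m}D$ largest; since $cD\subseteq x_{j}D\subseteq x_{m}D$ we also have $x_{m}\mid c$. Every remaining generator lies in $x_{m}D$: a comparable one by maximality, and a non-comparable one because it cannot divide $c$ (otherwise Lemma \ref{factor} would make it comparable) and therefore satisfies $x_{i}D\subseteq cD\subseteq x_{m}D$. Hence $F=x_{m}D$ is principal and $F^{v}=x_{m}D\subseteq M$.

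For the failure of necessity, the plan is to use a $t$-local domain that is not a valuation domain and whose spectrum branches, for instance the two-dimensional domain $D=\mathbb{R}+(X,Y)\mathbb{C}[X,Y]_{(X,Y)}$ of Example \ref{DW-DT}, which is shown there to be $t$-local and to possess infinitely many pairwise incomparable height-one primes. Assume toward a contradiction that $D$ has a nonzero nonunit comparable element, and set $P:=\bigcap\{cD\mid c\in \mathscr{C}\}$ as in Lemma \ref{C}; then $\mathscr{C}=D\setminus P$, the ideal $P$ is prime and comparable with every ideal of $D$, and $D/P$ is a valuation domain. I would then rule out each possibility for $P$: if $P=(0)$ then $D$ is a valuation domain by Corollary \ref{CA}, contradicting that $\dim(D)=2$ with branched spectrum; if $P=M$ then $\mathscr{C}=D\setminus M$ consists only of units, contradicting the existence of a \emph{nonunit} comparable element; and if $P$ has height one then $P$ is one of the height-one primes and cannot compare with the other (incomparable) height-one primes, contradicting Lemma \ref{C}(4). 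Thus no such element exists.

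I expect the real work to lie in the second part: locating an explicit $t$-local domain and certifying the absence of comparable elements, which forces the structural case analysis on $P$ through Lemma \ref{C}. The first part is essentially bookkeeping, the only delicate point being the choice of the maximal comparable generator $x_{m}$ and the verification that the non-comparable generators are swept into $x_{m}D$.
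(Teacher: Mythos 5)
Your proof is correct, and the two halves deserve separate comments. For the sufficiency half you follow essentially the paper's strategy --- trap a finitely generated $F\subseteq M$ inside a principal ideal generated by a comparable nonunit and conclude $F^{v}\subseteq M$ --- with different bookkeeping: the paper proves locality directly (rather than quoting Lemma~\ref{C}(4)), assigns to \emph{each} generator $x_i$ a comparable nonunit factor $h_i$ (namely $c$ or $x_i$ itself, according to which way the comparison with $c$ goes), and collapses $(h_1,\dots,h_n)$ step by step to a single $(k)$; your two-case split (all $x_iD\subseteq cD$, versus some $x_j\mid c$) reaches the same conclusion and even shows $F$ is principal in the second case. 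In your first case the homogeneity detour is unnecessary: $F\subseteq cD$ already gives $F^{v}\subseteq(cD)^{v}=cD$ since principal ideals are divisorial. Where you genuinely diverge is the counterexample. The paper takes a \emph{one-dimensional} local non-valuation domain such as $\mathbb{R}+X\mathbb{C}[\![X]\!]$, which is $t$-local for free by Corollary~\ref{A-AA-AB-AC}(5), and notes that a nonzero nonunit comparable element would lie in the unique nonzero prime, so the Kaplansky-type theorem (Lemma~\ref{Kap}) would force $D$ to be a valuation domain --- a two-line contradiction. You instead use the two-dimensional pullback of Example~\ref{DW-DT} and run a case analysis on the height of the comparable prime $P=\bigcap\{cD\mid c\in\mathscr{C}\}$ from Lemma~\ref{C}, eliminating height $0$ via Corollary~\ref{CA}, height $2$ because $\mathscr{C}$ must contain a nonunit, and height $1$ because $P$ would have to be comparable with the infinitely many pairwise incomparable height-one primes. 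This is valid and avoids Lemma~\ref{Kap}, but it is longer and requires a branched spectrum where the paper's argument needs only dimension one; it buys you nothing extra here, since the one-dimensional examples already witness the failure of the converse.
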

\begin{proof}
Let $D$ be an integral domain and let $c$ be a nonzero nonunit comparable  
element in $D$. 
We first show that $D$ is local. Suppose, by way of
contradiction, that there exist two co-maximal nonunit elements $x,y$ in $D$,
i.e., $rx+sy=1$ for some $r,s\in D$. Now, as $c$ is comparable, $c|rx$ \ or \ 
$rx|c$. So $rx$ has a nonzero nonunit comparable factor $c$ or, being a factor of 
$c$, $rx $ is a nonzero nonunit comparable element. 
Thus, in both cases, $rx$ has a nonzero nonunit
comparable factor $h$.
 Similarly $sy$ has a nonzero nonunit comparable factor $k$.
 Since $h,k$
are comparable, $h|k$ or $k|h$,  say $h|k$. Thus, assuming that $rx+sy=1$, we
get the contradictory conclusion that a nonunit divides a unit. So, $D$ is
local. We denote by $M$ its maximal ideal.

Next,  let $x_{1},x_{2},\dots,x_{n}\in
M $ and note that, as above, each of the $x_{i}$ has a nonzero nonunit comparable
factor $h_{i}$.
 Thus, $(x_{1},x_{2},\dots,x_{n}) \subseteq (h_{1},h_{2},\dots,h_{n})$. 
 
 Now, consider $h_{1}, h_{2}$.   They must  have  a nonzero nonunit
common factor $k_{1}$ (which is equal to $ h_{1}$ or $h_{2})$.
So, $(x_{1},x_{2},\dots, x_{n})$ $
\subseteq (h_{1},h_{2},\dots, h_{n})\subseteq (k_{1}, h_{3}, \dots, h_{n})$. 
Continuing
this process,  we eventually get a nonzero nonunit comparable element $k$ such that 
$(x_{1},x_{2},\dots, x_{n})$ $\subseteq (h_{1},h_{2},\dots, h_{n})\subseteq (k)\subseteq M$.
But,  as $(x_{1},x_{2},\dots, x_{n})\subseteq (k)$ implies $(x_{1},x_{2},\dots, x_{n})^{v}\subseteq (k)$, we conclude that, for each finitely generated ideal 
$(x_{1},x_{2},\dots, x_{n})\subseteq M,$ $(x_{1},x_{2},\dots, x_{n})^{v}\subseteq M$.
Thus, $D$ is a $t$-local domain. 

For the converse, note that a one dimensional
 local domain has only one nonzero prime (=maximal) ideal and so it is a valuation
ring if and only if it contains a nonunit comparable element, by the
Kaplansky-type theorem mentioned above (Lemma \ref{Kap}). The proof is complete once we note
that there do exist one-dimensio\-nal, (Noetherian $t$-)local domains that are
not valuation domains (in fact, non integrally closed domains) (e.g., $\mathbb{R} + X \mathbb{C}[\![X]\!]$).

Note also  that there even exist  1-dimensional $t$-local integrally closed domains that are not valuation domains (e.g., $\overline{\mathbb{Q}} + X \mathbb{C}[\![X]\!]$, where $\overline{\mathbb{Q}}$ is the algebraic closure of ${\mathbb{Q}}$ in $ {\mathbb{C}}$). 
\end{proof}

\begin{remark}
\em{Note that the previous example shows that a local domain with divisorial maximal ideal may not contain a nonzero nonunit comparable element. On the other hand, a valuation domain $V$ with  nonprincipal maximal ideal (in particular, $\dim(V) \geq 2$) is a domain containing
a nonzero nonunit comparable element and so it is a $t$-local domain with nondivisorial maximal ideal.
}
\end{remark}

\medskip
Recall that an integral domain $D$ with quotient field $K$ is called a {\it pseudo-valuation domain} (for short, {\it PVD}) if $D$ is local and  the  maximal ideal $M$ of $D$ is strongly prime (i.e., 
whenever elements $x$ and $y$ of $K$ satisfy $xy \in M$, then either $x \in M$   or $y \in M$).  From the proof of the previous Proposition \ref{B}, we give now  a general class of $t$-local domains that do not contain nonzero nonunit comparable elements.

\begin{example}
\em{ Let $(T, M)$ be any local domain, let $\boldsymbol{k}(T) := T/M$, let $\varphi: T \rightarrow \boldsymbol{k}(T)$ be the canonical projection, and let $F$ be a proper subfied of $\boldsymbol{k}(T)$.  Set $D:=\varphi^{-1}(F)$. 
It is known that $D$ is   a local domain with  maximal ideal $M$ and $(M:M) =(D:M) =T$. Since $M=(D:T)$, it is easy to see  that $M$ is a divisorial ideal in $D$ and, in particular, a $t$-ideal. Thus, $(D,M)$ is a $t$-local domain.
In particular,  any PVD  is a $t$-local domain \cite[Theorem 2.10]{[HH-PVD]}. 
}
\end{example}

\begin{remark}
\em{
Note that the argument used in the previous example can be used to construct a more general class of $t$-local domains. 
Start from a (not necessarily local) integral domain $T$ such that its Jacobson ideal $J(T)$ is nonzero and suppose that the ring $T/J(T)$ contains properly a field $F$.  Let $\varphi: T \rightarrow T/J(T)$ be the canonical projection and let $D := \varphi^{-1}(F)$, then $D$ is a $t$-local domain.
}
\end{remark}

\medskip

A fractional ideal $E\in \F(D)$ is said to be $v$-{\it{invertible}} (respectively, $t$-{\it{invertible}}) if there
is $G\in \F(D)$ such that ($(EG)^{v}=D$ (respectively, $(EG)^{t}=D$). Obviously, every
invertible ideal is $t$-invertible.

  Recall that a {\it GCD domain} is  an integral domain $D$ such that,  for each $a, b \in D$, $aD \cap bD$ is principal or, equivalently, $(a,b)^v$ is principal. Therefore, a GCD domain (e.g., a B\'ezout domain) is a P$v$MD.

\begin{corollary}\label{BA} 
Let $D$ be a  P$v$MD, not a field. Then,  $D$ is a valuation domain if and only if $D$ contains a nonzero nonunit comparable element.
\end{corollary}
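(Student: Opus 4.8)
The plan is to prove the two implications separately; the forward direction is immediate from the definition of a valuation domain, while the reverse direction will combine Proposition \ref{B} with Griffin's characterization of P$v$MDs.

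First I would dispose of necessity. If $D$ is a valuation domain and not a field, its (principal) ideals are linearly ordered under inclusion, so \emph{every} nonzero element of $D$ is comparable; as $D$ is not a field it possesses a nonzero nonunit, and this element is comparable, giving the desired conclusion.

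For sufficiency, suppose $D$ contains a nonzero nonunit comparable element. By Proposition \ref{B}, $D$ is then a $t$-local domain; write $M$ for its maximal ideal. Since every maximal $t$-ideal of $D$ is contained in the unique maximal ideal $M$, and $M$ is itself a $t$-ideal, $M$ is the unique maximal $t$-ideal, i.e. $\Max^t(D) = \{M\}$. Because $D$ is also a P$v$MD, Griffin's theorem \cite[Theorem 5]{[Gr]} ensures that $D_Q$ is a valuation domain for each maximal $t$-ideal $Q$; applying this with $Q = M$ and using that $D = D_M$ (as $D$ is local with maximal ideal $M$), I conclude that $D$ itself is a valuation domain.

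The argument encounters no genuine obstacle, since both halves are essentially packaged in the quoted results; the one place meriting attention is the hypothesis that $D$ is not a field, needed to guarantee the existence of a nonzero nonunit in the necessity direction (and, correspondingly, to prevent the equivalence from failing in the degenerate field case, where the comparable-element condition is vacuously unsatisfiable).
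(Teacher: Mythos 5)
Your proof is correct, and its skeleton matches the paper's: both directions hinge on Proposition \ref{B} to get $t$-locality from the comparable element, and on the observation that in a valuation domain that is not a field every nonzero nonunit is comparable. Where you diverge is in how you close the sufficiency direction. The paper invokes the fact that a $t$-local P$v$MD is a valuation domain (its Proposition \ref{F}), whose proof runs through $t$-invertibility: in a P$v$MD every nonzero finitely generated ideal is $t$-invertible, and by \cite[Proposition 1.12(1)]{[ACZ]} (Proposition \ref{ACZ} in the paper) $t$-invertible ideals of a $t$-local domain are principal, so $D$ is a local B\'ezout domain, hence a valuation domain. You instead observe that $t$-locality forces $\Max^t(D)=\{M\}$ and then apply Griffin's characterization to get that $D=D_M$ is a valuation domain. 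Both routes are legitimate and short; Griffin's theorem makes your version essentially a one-line localization argument, while the paper's version is more self-contained within its own development (it reuses Proposition \ref{ACZ}, already proved in Section 2, and previews the ideal-theoretic mechanism that drives Section 4). Your closing remark about the field hypothesis is also on point: a field is a valuation domain but has no nonzero nonunits, so the equivalence would fail without that exclusion.
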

\begin{proof}
The statement follows from Proposition \ref{B}, from the fact that a $t$-local P$v$MD is a valuation domain anyway and from the fact that
a valuation domain that is not a field must contain many nonunit comparable
elements (in fact, all nonunit elements are comparable).
\end{proof}

From the previous corollary it follows that every Krull domain (e.g., UFD) containing a nonzero nonunit comparable element is a DVR and that every GCD domain  containing a nonzero nonunit comparable element is a valuation domain.

 Now, here
comes something more general and  a tad surprising. Call an integral domain $D$ {\it atomic} if
every nonzero   nonunit of $D$ is expressible as a finite product irreducible
elements.  An irreducible element is called also   {\it atom}. For instance, every Noetherian domain and every UFD is atomic.

\begin{corollary} \label{BB}
An atomic domain that contains a nonzero nonunit comparable element
is a DVR.
\end{corollary}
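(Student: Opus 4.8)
The plan is to exploit the atomic hypothesis to show that, once a single comparable atom is present, \emph{every} atom of $D$ must be associate to it, which collapses $D$ into the shape of a discrete valuation ring. First I would manufacture a comparable atom: since $D$ contains a nonzero nonunit comparable element $c$ and $D$ is atomic, write $c = p_1 p_2 \cdots p_n$ as a product of atoms. Each $p_i$ is a nonunit factor of the comparable element $c$, hence is itself comparable by Lemma \ref{factor}. Fix one such comparable atom, call it $p$. By Proposition \ref{B}, $D$ is already known to be $t$-local, so in particular $D$ is local; denote its maximal ideal by $M$, and note $p \in M$ since $p$ is a nonunit.

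The key step is to prove that every atom $q$ of $D$ is associate to $p$. Because $p$ is comparable, for the element $q$ we have either $pD \subseteq qD$ or $qD \subseteq pD$, that is, $q \mid p$ or $p \mid q$. In the first case $p = q w$ for some $w \in D$, and irreducibility of $p$ together with the fact that $q$ is a nonunit forces $w$ to be a unit, so $q$ and $p$ are associates; the second case is entirely symmetric, using irreducibility of $q$. Hence all atoms of $D$ lie in a single associate class, that of $p$.

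Now I would invoke atomicity a second time. Every nonzero nonunit $x \in D$ factors as $x = q_1 \cdots q_m$ with each $q_j$ an atom, hence each $q_j$ associate to $p$; therefore $x = u p^{m}$ for some unit $u$ and some $m \geq 1$. Consequently every nonzero element of $D$ has the form $u p^{m}$ with $u$ a unit and $m \geq 0$, and the assignment $v(u p^{m}) := m$ is well defined: if $u p^{m} = u' p^{m'}$ with $m < m'$, cancelling $p^{m}$ in the domain $D$ would exhibit the unit $u$ as the nonunit multiple $u' p^{m'-m}$, a contradiction. This $v$ is additive and extends to a discrete rank-one valuation on the quotient field of $D$ whose valuation ring is precisely $D$; in particular $M = pD$, and $D$ is a DVR. (Alternatively, one sees that $\bigcap\{c D \mid c \in \mathscr{C}\} = \bigcap_m p^{m} D = (0)$, so $D$ is a valuation domain by Corollary \ref{CA}, and the valuation is discrete; but the direct structural argument above is cleaner.)

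The main obstacle---indeed the only nontrivial point, and the source of the ``tad surprising'' character of the statement---is recognizing that comparability of a \emph{single} atom, rather than of an arbitrary element, propagates to \emph{all} atoms. It is the interplay between the purely ideal-theoretic comparability condition $pD \subseteq qD$ or $qD \subseteq pD$ and the irreducibility of the atoms that pins every atom down to one associate class. Once this is established, atomicity produces the uniform factorization $u p^{m}$ mechanically, and the DVR conclusion is immediate.
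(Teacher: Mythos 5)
Your proof is correct and follows essentially the same route as the paper's: extract a comparable atom from $c$ via Lemma \ref{factor}, use its comparability together with irreducibility to show every nonunit (equivalently, every atom) is a multiple of (associate to) it, and then invoke atomicity to write each nonzero nonunit as a unit times a power of that atom. The only cosmetic difference is that the paper first proves $M=hD$ directly and then runs a descent $x = h^n x_n$, whereas you pin down the associate class of all atoms first and read off $x = u p^m$ from the atomic factorization; your version is, if anything, slightly more careful about why the descent terminates.
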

\begin{proof}
 Let $D$ be an atomic domain and let $c$ be a nonzero nonunit comparable
element in $D$. Then, by Proposition \ref{B}, $D$ is $t$-local domain; denote by $M$ its maximal ideal.
 Let $h$ be an irreducible factor of $c$. Then $h$ is a comparable
element, being a factor of a comparable element (Lemma \ref{factor}). 
 So, for every $x$ in $D$,  either $h|x$ or $x|h$. 
 Now, as $h$ is irreducible, $x|h$ means that $x$ is a unit or $x=h$. Thus, for all nonunits $x\in D$, necessarily  $h|x$. That is $M=hD$ and so $h$
is a prime element in $D$. 
Next, as $h|x$ for each nonzero nonunit $x\in D$,  we
have $x=x_{1}h$ and if $x_{1}$ is a nonunit then $x_{1}=x_{2}h$ and so $
x=h^{2}x_{2}$. Continuing this way, 
since $D$ is
atomic, for each nonzero nonunit $x\in D$ there is an integer $n=n(x)$ (depending on $x$) such
that $x=h^{n}x_{n}$ where $x_{n}$ is a unit. But then  we can conclude that $D$ is a DVR and $h$ is a uniformizing parameter of $D$.
\end{proof}

Corollary \ref{BB} was first proved  for Noetherian domains; we thank Tiberiu Dumitrescu  for suggesting the atomic domain assumption. 
 With hindsight we
can prove  a more precise result.

\begin{corollary} \label{BD}
 Let $D$ be a domain that contains a nonzero nonunit comparable
element. 
\begin{enumerate}
\item[(1)]
In this situation, $D$ is local (Proposition \ref{B}) and
the maximal ideal of $D$ is generated by the nonunit comparable elements of $D$.

\item[(2)] The integral domain $D$ contains an atom $\alpha$ if and only if $\alpha$ is the generator
of the (unique) maximal ideal of $D$ and, hence,  $\alpha$ is a prime and comparable element.
\end{enumerate}
\end{corollary}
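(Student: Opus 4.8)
The plan is to build everything on Proposition \ref{B} and the factor-stability of comparability recorded in Lemma \ref{factor}. By Proposition \ref{B} the hypothesis already forces $D$ to be local; I would write $M$ for its maximal ideal and fix a nonzero nonunit comparable element $c$. For part (1), let $M'$ denote the ideal generated by all nonunit comparable elements of $D$. The inclusion $M' \subseteq M$ is immediate, since each such element is a nonunit and $D$ is local. For the reverse inclusion I would take an arbitrary nonzero $x \in M$ and use the comparability of $c$: either $c \mid x$, in which case $x \in cD \subseteq M'$, or $x \mid c$, in which case $x$ is a nonunit factor of $c$ and hence itself comparable by Lemma \ref{factor}, so that $x \in M'$. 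In either case $x \in M'$, whence $M \subseteq M'$ and equality holds. This is essentially a repackaging of the first paragraph of the proof of Proposition \ref{B}.

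For the forward implication of part (2), I would start from an atom $\alpha$. Being a nonzero nonunit, $\alpha \in M$, so part (1) supplies a nonunit comparable factor $h$ of $\alpha$; writing $\alpha = hq$ and invoking the irreducibility of $\alpha$ forces $q$ to be a unit, so $\alpha$ is an associate of $h$, and therefore $\alpha$ is itself comparable, since $\alpha D = hD$ compares under inclusion with every principal ideal. Now comparability yields, for each $x \in D$, either $\alpha \mid x$ or $x \mid \alpha$; since $\alpha$ is irreducible, the second alternative forces $x$ to be a unit or an associate of $\alpha$. Hence every nonunit of $D$ lies in $\alpha D$, giving $M \subseteq \alpha D \subseteq M$, i.e. $M = \alpha D$. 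This is the same mechanism used in Corollary \ref{BB}, but applied to the single atom $\alpha$ in place of atomicity of the whole ring.

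For the converse together with the final ``hence'' clause, I would assume that $\alpha$ generates $M$. Then $M = \alpha D$ is prime (being maximal), so $\alpha$ is a prime element; moreover $\alpha$ is comparable, because $\alpha D = M \subseteq xD$ whenever $x$ is a unit and $xD \subseteq M = \alpha D$ whenever $x$ is a nonunit. Finally, $\alpha$ is an atom: a factorization $\alpha = ab$ with $a,b$ nonunits would place $a,b \in M = \alpha D$, and writing $a = \alpha a'$ and cancelling $\alpha$ in $\alpha = \alpha a' b$ would make $b$ a unit, a contradiction. The only genuinely delicate step in the whole argument is the passage from ``atom'' to ``comparable'' in the forward direction of part (2); everything else is bookkeeping with Proposition \ref{B}, Lemma \ref{factor}, and the definition of a comparable element.
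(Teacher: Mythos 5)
Your argument is correct, and part (2) follows the paper's own proof almost verbatim (atom $\alpha$ versus a comparable element $c$: either $c\mid\alpha$, forcing $\alpha$ and $c$ to be associates, or $\alpha\mid c$, forcing $\alpha$ to be comparable via Lemma \ref{factor}; then irreducibility plus comparability gives $M=\alpha D$). Where you genuinely diverge is part (1). The paper routes this through Lemma \ref{C}: it introduces the comparable prime $P=\bigcap\{cD\mid c\in\mathscr{C}\}$, uses that $D/P$ is a valuation domain to make finitely generated ideals between $P$ and $M$ principal, and then invokes $M=M^t$. Your version is a direct elementwise dichotomy --- for $x\in M$, either $c\mid x$ (so $x\in cD$) or $x\mid c$ (so $x$ is itself comparable by Lemma \ref{factor}) --- which avoids Lemma \ref{C} and the $t$-ideal structure entirely and is, frankly, cleaner and more self-contained; what the paper's route buys is consistency with the structural picture ($P$, $D/P$ a valuation domain) that it exploits elsewhere, e.g.\ in Proposition \ref{G}. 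One small imprecision: in part (2) you say that ``part (1) supplies a nonunit comparable factor $h$ of $\alpha$,'' but the \emph{statement} of (1) (that $M$ is generated by the nonunit comparable elements) does not by itself give every element of $M$ a comparable divisor --- membership in an ideal generated by a set is not divisibility by a member of that set. What you actually need, and what you in fact proved en route to (1), is the dichotomy ``$c\mid\alpha$ or $\alpha\mid c$'' together with Lemma \ref{factor}; cite that argument (or just redo the two-line dichotomy) rather than the statement of (1).
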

\begin{proof}  (1) By Proposition \ref{B}, $D$ is $t$-local; let $M$ denote the maximal ideal of $D$. 
With the notation of Lemma \ref{C}, $M$ properly contains the comparable prime ideal $P$ of $D$.
 If $(x_{1},x_{2},\dots, x_{n})$ is a finitely generated ideal and  
$P \subseteq (x_{1},x_{2},\dots, x_{n})\subseteq M$, since $D/P$ is a valuation domain, then  $(x_{1},x_{2},\dots, x_{n}) =(x)$ for some $x \in \{x_{1},x_{2},\dots, x_{n}\}$. Therefore, since $M= M^t$, $M$ is generated by the nonunit comparable elements of $D$.

(2)  Let $\alpha$ be an atom of $D$ and let 
$c$ be a nonzero nonunit comparable element of $D$.
Then, either $c|\alpha$ or $\alpha|c$. If $c|\alpha$
then, as $\alpha$ is an atom and $c$ a nonunit, $c$ and $\alpha$ must be associate,
so $\alpha$ is a comparable element. 
If, on the other hand, $\alpha|c$ then $\alpha$ is a
comparable element, being a factor of a comparable element (Lemma \ref{factor}). Thus, as above, $\alpha D=M$.

The converse is obvious, indeed if the maximal ideal $M$ of a local domain $D $ is principal and $M=\alpha D$ then, up to associates, $\alpha$ is the only atom in $
D$.
\end{proof}

Note that if, instead considering atoms (=irreducible elements), we consider prime elements, we can state a result analogous to the previous corollary in a more general setting, with a different proof.

\begin{proposition}\label{prime} 
 Let $D$ be a domain.
 
 \begin{enumerate}
 \item[(1)] If    a maximal $t$-ideal  $M$ of $D$ contains a prime element $p$, then $M = pD$.
 
\item[(2)]  If $(D,M)$  is a $t$-local domain (e.g., if $D$ contains a nonzero nonunit comparable
element), then  $D$ contains a prime element $p$ if and only if $p$ is the generator
of the  maximal ideal of $D$ and, hence,  $p$ is a comparable element.
\end{enumerate}
\end{proposition}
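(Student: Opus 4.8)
The plan is to reduce both parts to one fact: for a prime element $p$, the principal prime ideal $pD$ is actually a \emph{maximal} $t$-ideal. Once this is in hand, both statements follow immediately.

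First, for part (1), I would record that $pD$ is a prime $t$-ideal---prime because $p$ is a prime element, and a $t$-ideal because any principal ideal satisfies $(p)^v = (p) = (p)^t$. To promote this to maximality among proper $t$-ideals, I would suppose $J$ is a proper $t$-ideal with $pD \subsetneq J$, pick $x \in J \setminus pD$, and show $(p,x)^v = D$. The computation is short: if $y \in (p,x)^{-1}$ then $yp \in D$, say $y = a/p$ with $a \in D$, and then $yx = ax/p \in D$ forces $p \mid ax$; since $p$ is prime and $p \nmid x$ (because $x \notin pD$), we get $p \mid a$, so $y \in D$. Hence $(p,x)^{-1} = D$, i.e. $(p,x)^t = (p,x)^v = D$; but $(p,x) \subseteq J$ with $J$ a $t$-ideal gives $D = (p,x)^t \subseteq J$, contradicting properness. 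Thus $pD$ is a maximal $t$-ideal, and since $M$ is a proper $t$-ideal containing $pD$, maximality forces $M = pD$.

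Next, for part (2), I would use that a $t$-local domain $(D,M)$ has $M$ as its unique maximal $t$-ideal. If $D$ has a prime element $p$, then $pD$ is proper, hence $pD \subseteq M$, and part (1) applied to the maximal $t$-ideal $M$ containing $p$ gives $M = pD$. Conversely, if $p$ generates $M$, then $pD = M$ is prime, so $p$ is a prime element; and $p$ is comparable because for any $x \in D$ either $x$ is a unit, giving $pD \subseteq D = xD$, or $x$ is a nonunit, giving $x \in M = pD$ and hence $xD \subseteq pD$.

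The one genuinely nonroutine step is the verification that $pD$ is a maximal $t$-ideal, and everything hinges on it; the essential use of the hypothesis is the primality of $p$ in the implication $p \mid ax \Rightarrow p \mid a$ or $p \mid x$. The remaining ingredients---that principal ideals are $t$-ideals, that a $t$-local domain has a single maximal $t$-ideal, and the comparability check---are immediate from the definitions recalled earlier in the paper.
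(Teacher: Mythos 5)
Your proof is correct and follows essentially the same route as the paper: the heart of both arguments is the computation, using primality of $p$, that $(p,x)^{-1}=D$ (hence $(p,x)^v=D$) whenever $p\nmid x$, which forces any proper $t$-ideal containing $p$ to lie inside $pD$. The only cosmetic difference is that you package this as ``$pD$ is itself a maximal $t$-ideal,'' while the paper computes $(p,x)^{-1}=(pD\cap xD)/(px)$ in both cases and concludes directly that $M\subseteq pD$; the two are interchangeable.
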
 
\begin{proof} (1)
Let  $p$ be a prime element of a domain $D$ then, for each $x$ in $D$,
$pD\cap xD= xD$ 
 or $pD\cap xD= pxD$.

So, 
$$((p,x)D)^{-1}=\frac{pD\cap xD}{
px}=\left(\frac{1}{p}\right)D \quad \mbox{  or  } \quad  ((p,x)D)^{-1}=D.$$
 But then $((p,x)D)^{v}=pD$ or $((p,x)D)^{v}=D$. 
 So, if a prime
element $p$ belongs to a maximal $t$-ideal $M$ then $M=pD$.

(2)  If a prime
element $p$ belongs to a $t$-local ring $(D, M)$ then $M=pD$, by (1) and consequently $p$
is a comparable element of $D$.
\end{proof}

 It is well known that, if $p$ is a prime
element in an integral domain $D$, then $\bigcap_{n \geq 0} p^{n}D$ is a prime ideal too (see, for instance, Kaplansky \cite[Exercise 5, pages 7-8]{[Kap]}).

\medskip

\begin{theorem} \label{D}   
If a domain $D$ contains a nonzero nonunit comparable element
then, for every nonzero nonunit comparable element $x$ of $D$, we have that $Q:=\bigcap_{n \geq 0} x^{n}D$
is a prime ideal such that $D/Q$ is a valuation domain and $Q=QD_{Q}$.

Conversely, if there is a nonzero element $x$ in a domain $D$ such that $Q:=\bigcap_{n \geq 0} x^{n}D$  is a prime ideal, $D/Q$ is a valuation domain, and $
Q=QD_{Q}$, then $D$ is $t$-local and $x$ is a comparable element of $D$.
\end{theorem}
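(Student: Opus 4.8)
The plan is to prove the two directions of Theorem \ref{D} by leaning on the machinery already developed, principally Lemma \ref{C} and its partial converse in parts (5)--(6), together with the classical fact (Kaplansky) that $Q:=\bigcap_{n\geq 0}x^nD$ is prime when $x$ is prime. The key structural observation I would exploit is that a nonzero nonunit comparable element $x$ behaves almost like a prime: because $x$ is comparable, for each $a\in D$ either $x\mid a$ or $a\mid x$, which gives very tight control over the powers $x^n$ and over the ideal $Q$.

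\medskip

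\noindent\emph{Forward direction.} Assume $D$ contains a nonzero nonunit comparable element and let $x$ be any such element. First I would show $Q=\bigcap_{n\geq 0}x^nD$ is prime. The clean route is to verify that $x$ is in fact a prime element of $D$: by Proposition \ref{B}, $D$ is $t$-local with maximal ideal $M$, and $x$ is comparable, so for all $a,b$ with $x\mid ab$ one uses comparability of $x$ against the factors (as in the proofs of Lemma \ref{factor} and Corollary \ref{BB}) to force $x\mid a$ or $x\mid b$; then Kaplansky's exercise cited just before the theorem gives that $Q=\bigcap_{n\geq 0}x^nD$ is prime. If $x$ is not already prime one must argue more carefully, but comparability makes the principal ideals $x^nD$ totally ordered and the intersection prime by a direct check: if $ab\in Q$ but $a,b\notin Q$, pick the largest powers dividing $a$ and $b$ and derive a contradiction using that $x$ is comparable. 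Once $Q$ is prime, I would identify $Q$ with the prime $P=\bigcap\{cD\mid c\in\mathscr C\}$ of Lemma \ref{C}(1), or show $Q\supseteq P$ directly, and then transport the properties: $D/Q$ is a valuation domain and $Q=QD_Q$ follow from Lemma \ref{C}(2)--(3). Concretely, since $x$ is comparable every element of $D\setminus Q$ is comparable, so $Q$ plays exactly the role of the prime in Lemma \ref{C}(5)--(6), and the two desired properties are precisely (5,a) and (5,b).

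\medskip

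\noindent\emph{Converse direction.} Now suppose $x\neq 0$ satisfies: $Q=\bigcap_{n\geq 0}x^nD$ is prime, $D/Q$ is a valuation domain, and $Q=QD_Q$. This is exactly the hypothesis of Lemma \ref{C}(5) with $T=D$ and the distinguished prime equal to $Q$, provided $Q$ is nonmaximal; if $Q$ were maximal then $D/Q$ being a valuation domain (a field) is automatic and one checks the statement separately, but the interesting case invokes (5). By Lemma \ref{C}(5), every element of $D\setminus Q$ is comparable; in particular $x\in D\setminus Q$ (since $x\notin Q=\bigcap x^nD$, as $x\mid x^n$ for all $n$ would give $x\in x^nD$ only trivially---here the point is $x\notin \bigcap x^nD$ because otherwise $x=x^2u$ forces $x$ a unit), so $x$ is a comparable element. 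Having exhibited a nonzero nonunit comparable element $x$, Proposition \ref{B} immediately yields that $D$ is $t$-local, completing the converse.

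\medskip

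\noindent\emph{Main obstacle.} The delicate point is the forward direction's primeness of $Q$ together with the clean identification of $Q$ with the canonical prime $P$ of Lemma \ref{C}. The hypotheses of Lemma \ref{C}(5)--(6) are stated for the \emph{minimal} such prime, whereas $Q=\bigcap_{n\geq 0}x^nD$ attached to an \emph{arbitrary} comparable $x$ need not be that minimal prime; so I must argue directly that this particular $Q$ still satisfies $D/Q$ valuation and $Q=QD_Q$. The resolution is that comparability of $x$ makes the quotient $D/Q$ inherit the total ordering of principal ideals (hence valuation), and $Q=QD_Q$ follows because any $s\in D\setminus Q$ is comparable and hence invertible modulo $Q$ against the generators of $Q$. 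Verifying these two facts for the specific $Q=\bigcap x^nD$, rather than citing Lemma \ref{C} verbatim, is where the real work lies.
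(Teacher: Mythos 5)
Your overall strategy coincides with the paper's: prove primeness of $Q$ directly from comparability of the powers $x^n$, deduce that every element of $D\setminus Q$ is comparable, read off that $D/Q$ is a valuation domain and $Q=QD_Q$, and obtain the converse from Lemma \ref{C}(5) together with Proposition \ref{B}. However, your preferred ``clean route'' for the forward direction --- showing that a nonzero nonunit comparable element $x$ is a prime element and then citing Kaplansky's exercise --- is simply false and must be discarded: in a DVR with uniformizer $t$, the element $t^2$ is comparable but divides $t\cdot t$ without dividing $t$. Comparability of $x$ against the factors of $ab$ only yields ``$x\mid a$ or $a\mid x$,'' which does not give primality. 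Fortunately your fallback is the correct (and the paper's) argument: if $a,b\notin Q$ then $a\notin x^mD$ and $b\notin x^nD$ for some $m,n$, and comparability of $x^m,x^n$ forces $aD\supsetneq x^mD$ and $bD\supsetneq x^nD$, whence $abD\supsetneq x^{m+n}D$ and $ab\notin Q$.

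Two points you leave as assertions deserve to be made explicit, since they carry the real weight. First, the claim that every $s\in D\setminus Q$ is comparable: the reason is that $s\notin x^mD$ for some $m$, so by comparability of $x^m$ one gets $s\mid x^m$, and a nonunit factor of a comparable element is comparable by Lemma \ref{factor}; this is exactly how the paper gets both that $D/Q$ is a valuation domain and (via $Q\subsetneq \tau D$ for $\tau\in D\setminus Q$) that $QD_Q\subseteq Q$. Your phrase ``invertible modulo $Q$ against the generators of $Q$'' should be replaced by this divisibility argument. Second, your worry about identifying $Q$ with the minimal prime $P$ of Lemma \ref{C}(1) is well placed --- in a rank-two valuation domain one has $P=(0)$ while $\bigcap x^nD$ can be the height-one prime --- and the correct resolution is indeed the direct verification just described, not an appeal to Lemma \ref{C}(2)--(3). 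In the converse, your concern about $Q$ being maximal is vacuous: $Q$ prime forces $x$ to be a nonunit, and $Q\subsetneq xD\subsetneq D$ shows $Q$ is nonmaximal, so Lemma \ref{C}(5) applies exactly as the paper uses it.
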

\begin{proof} Indeed $Q$ is an ideal, being an intersection of ideals. 
Now, consider 
$S: =D\backslash Q$ and let $a,b\in S$. Then $a\notin x^{m}D$ for some
positive integer $m$ and $b\notin x^{n}D$ for some positive integer $n$.
Since $x$ and hence $x^{m},\ x^{n}$ are comparable, we conclude that 
$aD\varsupsetneq x^{m}D$ and $bD\varsupsetneq x^{n}D$.
Therefore, $abD\varsupsetneq ax^{n}D \varsupsetneq x^{n+m}D$ and so $ab\in S$ and $Q$ is a prime ideal.

From the above proof it follows that $S$ consists of factors of powers of
the comparable element $x$ and so every element of $S$ is comparable; this
implies that $D/Q$ is a valuation domain. 
Next,  let $\alpha/\tau \in QD_{Q}$ where $\alpha\in Q$
and $\tau \in D\backslash Q$. In particular,  $\tau$ divides some power of $x$ and so  $\tau$ is comparable. Hence, 
$\alpha D \subseteq Q \varsubsetneq \tau D$ which means that for some nonunit $y$ we have $\alpha=\tau y$.
As $\tau\notin Q$, then necessarily  $y\in Q$.
So $\alpha/\tau =y\in Q$. Thus $QD_{Q}\subseteq Q$, i.e. $Q =QD_{Q}$. 

The
converse follows from  Lemma \ref{C}(5) and Proposition \ref{B}  (see also \cite[Theo\-rem 2.3]{[GMZ]}).
\end{proof}

Note that there are integral domains that may or may not be local, but
have elements $x$ such that $\cap x^{n}D=:Q$ is a prime ideal such that $
Q=QD_{Q},$ but $D/Q$ is not a valuation domain. Here are some examples using
the $D+M$ construction studied by  Gilmer   \cite[page 202]{[G]}.

 We start from   a valuation domain $V$, with quotient field $K$,
expressible as $V=\boldsymbol{k}+M$, where $\boldsymbol{k}$ is a subfield of $V$ (and $K$) and $M$ is the maximal
ideal of $V$; thus, in the present situation,  the residue field $V/M$ is canonically isomorphic to $\boldsymbol{k}$.
 Let $D$ be a subring of $\boldsymbol{k}$. The ring $R:=D+M$ (subring of $V$) with  quotient field $K$ (the same as $V$)  has some interesting properties due to the
mode of this construction, as indicated for instance in \cite{[BG]}  (see also \cite[Theorem 1.4]{[F]}).
Our concrete model for these
examples would be  $V:=\boldsymbol{k}[\![X]\!]=\boldsymbol{k}+X\boldsymbol{k}[\![X]\!]$.

\begin{example} \label{DA} 
\em{ 
Given a field $\boldsymbol{k}$, let $D$ be a 1-dimensional local domain  contained in $\boldsymbol{k}$, with quotient
field $F \ (\subseteq \boldsymbol{k})$ and suppose that $D$ is not a valuation domain.
Then $R:=D+X\boldsymbol{k}[\![X]\!]$ is a (local) 2-dimensional domain such that, for each nonzero nonunit $x$ in $D$, we have 
$\bigcap_{n \geq 0} x^{n}R=X\boldsymbol{k}[\![X]\!]$. Indeed, for a  nonunit $x$ in a
1-dimensional local domain $D$, we have $\bigcap_{n \geq 0} x^{n}D=(0)$  and so $\bigcap_{n \geq 0}   x^{n}R=X\boldsymbol{k}[\![X]\!]$.
Moreover, since $R_{X\boldsymbol{k}[\![X]\!]} = F+ X\boldsymbol{k}[X]\!]$, then 
$X\boldsymbol{k}[\![X]\!]R_{X\boldsymbol{k}[\![X]\!]}=X\boldsymbol{k}[\![X]\!](F+X\boldsymbol{k}[X]\!])=X\boldsymbol{k}[\![X]\!]$.  In this situation, $R/X\boldsymbol{k}[\![X]\!]=D$.
}
\end{example}

What makes the above example work is the fact that, for a nonunit $x$ in a
one dimensional  local domain $D$, we have $\bigcap_{n \geq 0} x^{n}D =(0)$. Call an
integral domain $D$ an {\it Archimedean domain} if, for all nonunit elements $x$
in $D$, we have $\bigcap_{n \geq 0} x^{n}D=(0)$  \cite[Definition 3.6]{[Sh]} (this class of domains was previously considered in \cite{[O]} without naming them).
By the Krull intersection theorem, every Noetherian domain is Archimedean. Since Mori domains satisfy the ascending chain condition on principal ideals, they are Archimedean; in particular,  Krull domains are Archimedean. The class of Archimedean domains includes also completely integrally closed domains \cite[Corollary 5]{[GH]} and 1-dimensional integral domains   \cite[Corollary 1.4]{[O]}.

An Archimedean  (possibly non local or any dimensional) version of the previous Example \ref{DA} is given next.

\begin{example} \label{DB}  
\em{ 
Given a field $\boldsymbol{k}$, let $D$ be an Archimedean domain   contained in $\boldsymbol{k}$, with quotient
field $F \ (\subseteq \boldsymbol{k})$ and suppose that $D$ is not a valuation domain. Then, as above, 
 $ R:=D+X\boldsymbol{k}[\![X]\!]$ is such that, for each nonzero nonunit $x$ in $D$, we have $\bigcap_{n \geq 0} x^{n}R=X\boldsymbol{k}[\![X]\!]$, 
$X\boldsymbol{k}[\![X]\!]=X\boldsymbol{k}[\![X]\!]R_{X\boldsymbol{k}[\![X]\!]}$ and $R/X\boldsymbol{k}[\![X]\!]=D$. In the present situation,  $\Max(R)$ has the same cardinality of  $\Max(D)$ and $\dim(R) = \dim(D)+1$.
}
\end{example}

\begin{example} \label{DC}  
\emph{ Let $D$ be an integral domain and $S$ a multiplicative subset of $D$. Following the construction $R:=D+XD_{S}[X]$ of \cite{[CMZ]}, if $s$ is a
nonunit element in $S$ such that $\bigcap_{n \geq 0} s^{n}D=(0)$ then $\bigcap_{n \geq 0}
s^{n}R=XD_{S}[X]$ a prime ideal of $R$. Also in this case  $R/XD_{S}[X]=D$, which might  not be a valuation domain. However, in the present situation,
$XD_{S}[X] \subsetneq XD_{S}[X](R_{XD_{S}[X]}) = XD_S[X]_{(X)}$.
}
\end{example}

\bigskip

\section{From $t$-local domains to valuation domains}

Because in a valuation domain $(V,M)$ every finitely generated ideal is
principal, the maximal ideal $M$ is obviously a $t$-ideal. So $t$-local
domains are ``cousins'' of valua\-tion domains,  but sort of far removed. 
For instance, a localization of a $t$-local domain is not necessarily $t$-local   (see, for instance,  Example \ref{bad behaved 2} or \cite{[Z-WB]}),  but of course a localization of a valuation domain is a valuation domain.

Explicitly,   a more simple
example is given by  $R:=\mathbb{Z}_{(p)}+(X,Y)\mathbb{Q}[\![X,Y]\!]$. The integral domain $R$ is local with maximal ideal $M:=p\mathbb{Z}_{(p)}+(X,Y)\mathbb{Q}[\![X,Y]\!]= pR$,
and so it is obviously  a $t$-local domain. However,  $R[1/p]=R_Q= \mathbb{Q}[\![X,Y]\!]$, where $Q:=(X,Y)\mathbb{Q}[\![X,Y]\!]$,  is  a 2-dimensional local  Noetherian Krull domain, and so it is
far away from being $t$-local.

So it is legitimate to ask:
Under what conditions is a $t$-local domain a valuation domain?
Here we address this question.

 The following is a simple result that hinges
on the fact that if $F$ is a nonzero finitely generated ideal in a $t$-ideal $I$
then $F^{v}\subseteq I$.

\begin{proposition} \label{E} 
For a finite set of elements $x_{1}, x_{2}, \dots, x_{n},$ in a $t$-local
domain $(D,M)$, the following are equivalent.
\begin{itemize}

\item[(i)] $(x_{1},x_{2}, \dots, x_{n})^{v}=D$.

\item[(ii)]  At least one $x_{i}$ is a unit.

\item[(iii)]  $(x_{1},x_{2}, \dots, x_{n})=D$.
\end{itemize}
\end{proposition}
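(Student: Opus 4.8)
The plan is to prove the cyclic chain of implications $(i) \Rightarrow (ii) \Rightarrow (iii) \Rightarrow (i)$, exploiting the defining property of a $t$-local domain recalled in the statement's preamble: if $F$ is a nonzero finitely generated ideal contained in a $t$-ideal $I$, then $F^{v} \subseteq I$. The most substantive implication is $(i) \Rightarrow (ii)$, and this is where the $t$-local hypothesis does all the work; the other two implications are essentially formal.

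First I would prove $(i) \Rightarrow (ii)$ by contraposition. Suppose no $x_{i}$ is a unit, so that every $x_{i}$ lies in the unique maximal ideal $M$. Then $F := (x_{1}, x_{2}, \dots, x_{n})$ is a nonzero finitely generated ideal contained in $M$. Since $(D,M)$ is $t$-local, $M$ is a $t$-ideal, and hence by the quoted property $F^{v} \subseteq M \subsetneq D$. In particular $(x_{1}, \dots, x_{n})^{v} \neq D$, which is exactly the negation of $(i)$. This argument is clean, and the $t$-ideal property of $M$ is precisely the tool that forces the $v$-closure to stay inside $M$ rather than expanding to all of $D$.

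Next, $(ii) \Rightarrow (iii)$ is immediate: if some $x_{i}$ is a unit, then $x_{i} D = D$, and since $x_{i} D \subseteq (x_{1}, \dots, x_{n}) \subseteq D$, the finitely generated ideal equals $D$. Finally, $(iii) \Rightarrow (i)$ follows from the star-operation axiom $(\ast_{1})$ together with $(\ast_{2})$ and $(\ast_{3})$: if $(x_{1}, \dots, x_{n}) = D$, then applying the $v$-operation gives $(x_{1}, \dots, x_{n})^{v} = D^{v} = D$, since $D = (1 \cdot D)$ is principal and every principal ideal satisfies $F^{v} = F$.

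I do not anticipate a genuine obstacle here, as the result is a direct packaging of the $t$-local hypothesis. The only point requiring a moment's care is making sure the implication $(i) \Rightarrow (ii)$ is carried out in the correct (contrapositive) direction, since it is tempting to try to argue $(i) \Rightarrow (iii)$ directly and lose the role of $M$ being a $t$-ideal; routing through $(ii)$ keeps the logic transparent. The entire proof rests on the single observation that in a $t$-local domain the maximal ideal absorbs the $v$-closure of any finitely generated subideal, preventing the $v$-closure of a proper ideal from ever being improper.
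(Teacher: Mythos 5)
Your proposal is correct and follows essentially the same route as the paper: the paper also disposes of (ii)$\Rightarrow$(iii)$\Rightarrow$(i) as immediate and proves (i)$\Rightarrow$(ii) by noting that if $(x_1,\dots,x_n)\subseteq M$ then $(x_1,\dots,x_n)^v\subseteq M$ because $M$ is a $t$-ideal, which is exactly your contrapositive argument. No gaps.
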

\begin{proof}
Clearly, (ii) $\Rightarrow$ (iii) $\Rightarrow$ (i).

(i) $\Rightarrow$ (ii) By the previous observation  $(x_{1}, x_{2}, \dots, x_{n}) \not\subseteq M$, and so at least one $x_{i} \notin M$.
\end{proof}

\begin{proposition} \label{F}
For an integral domain $D$ the following are
equivalent.
 
\begin{itemize}

\item[(i)]  $D$ is a valuation domain

\item[(ii)] $D$ is a  $t$-local GCD domain  (or, equivalently, a  $t$-local  B\'ezout domain).

\item[(iii)] $D$ is a $t$-local  P$v$MD (or, equivalently, a $t$-local  Pr\"ufer domain).
\end{itemize}
\end{proposition}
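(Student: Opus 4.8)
The plan is to prove the cycle of implications (ii) $\Rightarrow$ (i), (iii) $\Rightarrow$ (i), and the trivial (i) $\Rightarrow$ (ii), (i) $\Rightarrow$ (iii), since a valuation domain is both a GCD domain and a P$v$MD, and is certainly $t$-local. The heart of the matter is showing that a $t$-local domain with sufficient ``finiteness'' on its finitely generated ideals collapses to a valuation domain. I would first handle (ii) $\Rightarrow$ (i) directly and then observe that (iii) $\Rightarrow$ (i) follows by the same mechanism, since a $t$-local P$v$MD is, by Griffin's criterion recalled in the introduction ($D$ is a P$v$MD iff $D_Q$ is a valuation domain for each maximal $t$-ideal $Q$), already a valuation domain once we know $\Max^t(D) = \{M\}$. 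Indeed, in a $t$-local domain the unique maximal $t$-ideal is the maximal ideal $M$, so $D = D_M$ is a valuation domain immediately. This makes (iii) the easier of the two nontrivial implications.

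For (ii) $\Rightarrow$ (i), the strategy is to show that any two nonzero elements $a, b \in D$ are comparable, i.e. $aD \subseteq bD$ or $bD \subseteq aD$, which characterizes valuation domains. Since $D$ is a GCD domain, the ideal $(a,b)^v$ is principal, say $(a,b)^v = dD$ for some $d \in D$. Writing $a = da'$ and $b = db'$, I would pass to $a', b'$ and note that $(a', b')^v = D$, because dividing out the gcd yields a pair whose $v$-closure is all of $D$. Now I invoke the key feature of $t$-local domains captured in Proposition \ref{E}: in a $t$-local domain, $(x_1, \dots, x_n)^v = D$ forces at least one $x_i$ to be a unit. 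Applying this to the pair $a', b'$, either $a'$ or $b'$ is a unit of $D$; if $a'$ is a unit then $dD = a'dD = aD$ contains $b = db'$, giving $bD \subseteq aD$, and symmetrically if $b'$ is a unit we get $aD \subseteq bD$. Either way $a$ and $b$ are comparable, so $D$ is a valuation domain.

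The main obstacle, and the step deserving the most care, is the reduction from $(a,b)^v = dD$ to $(a',b')^v = D$ after factoring out $d$. This relies on the compatibility of the $v$-operation with multiplication by the principal ideal $dD$, namely $(dE)^v = d\,E^v$ for $E \in \F(D)$, which lets me compute $d(a',b')^v = (a,b)^v = dD$ and cancel $d$. I would state this multiplicativity explicitly as it is the linchpin that transfers the GCD hypothesis into a form where Proposition \ref{E} applies. The parenthetical equivalences in (ii) and (iii) — that one may replace ``GCD'' by ``B\'ezout'' and ``P$v$MD'' by ``Pr\"ufer'' in the presence of the valuation conclusion — are automatic once (i) holds, since a valuation domain is B\'ezout and Pr\"ufer; thus these require no separate argument beyond remarking that the stronger-looking variants are implied by (i) and imply the weaker ones, closing all the loops.
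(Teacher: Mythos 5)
Your proposal is correct, but it follows a genuinely different route from the paper on the nontrivial implications. The paper's proof handles only (iii) $\Rightarrow$ (i), relying on the chain (i) $\Rightarrow$ (ii) $\Rightarrow$ (iii), and the argument is: in a P$v$MD every nonzero finitely generated ideal is $t$-invertible, and by Proposition \ref{ACZ} (every $t$-invertible ideal of a $t$-local domain is principal, i.e.\ $\mbox{\texttt{Cl}}^t(D)=0$) such an ideal is principal; so $D$ is a local B\'ezout domain, hence a valuation domain. You instead prove (ii) $\Rightarrow$ (i) directly — writing $(a,b)^v=dD$, cancelling $d$ via $(dE)^v=dE^v$ to get $(a',b')^v=D$, and invoking Proposition \ref{E} to force $a'$ or $b'$ to be a unit, whence any two principal ideals are comparable — and you dispose of (iii) $\Rightarrow$ (i) by Griffin's criterion, since $\Max^t(D)=\{M\}$ gives $D=D_M$ a valuation domain at once. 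Both arguments are sound: your treatment of the GCD case is more elementary and self-contained (it uses only Proposition \ref{E} and the multiplicativity of $v$, and does not pass through $t$-invertibility), while your P$v$MD case leans on Griffin's localization theorem, which the paper quotes but does not prove; the paper's single argument via Proposition \ref{ACZ} is more economical and stays entirely within results proved in the text. Your handling of the parenthetical B\'ezout/Pr\"ufer variants by closing the loop through (i) matches the paper's implicit treatment.
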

\begin{proof}
(i) $\Rightarrow $ (ii) $\Rightarrow $ (iii) are straightforward.

For (iii) $\Rightarrow $ (i) note for instance that, in a P$v$MD, every nonzero finitely
generated ideal $(x_{1},x_{2},...,x_{n})$ is $t$-invertible. But, by \cite[Proposition 1.12(1)]{[ACZ]}, $(x_{1},x_{2},...,x_{n})$ is a principal ideal.
\end{proof}

Recall that a ring is {\it coherent} if every finitely generated ideal is finitely presented. It is well known that a commutative integral domain $D$ is  coherent if and
only if the intersection of every pair of finitely generated ideals is finitely generated   \cite[Theorem 2.2]{[Ch]}.

 Call a domain $D$ {\it a finite conductor domain} (for short, {\it FC domain}; this name was used for the first time in \cite{[Z-FC]})
if the intersection of every pair of principal ideals of $D$ is finitely generated.
Indeed,   ``finite conductor domain'' is a generalization of ``coherent
domain''.

\begin{proposition} \label{FA} 
For an integral domain $D$ the following are
equivalent.
\begin{enumerate}
\item[(i)] $D$ is a valuation domain.

\item[(ii)] $D$ is  an integrally closed coherent $t$-local domain.

\item[(iii)] $D$ is an integrally closed  finite conductor $t$-local domain.
\end{enumerate}
\end{proposition}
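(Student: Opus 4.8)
The plan is to prove the cycle (i)$\Rightarrow$(ii)$\Rightarrow$(iii)$\Rightarrow$(i). The first two implications are quick. A valuation domain $V$ is integrally closed and $t$-local (its finitely generated ideals are principal, so $M=M^v=M^t$), and it is coherent because any two finitely generated, hence principal, ideals $aV,bV$ are comparable under inclusion, so $aV\cap bV$ is again principal and in particular finitely generated; this gives (i)$\Rightarrow$(ii). For (ii)$\Rightarrow$(iii) there is nothing to do, since the finite conductor condition only requires that intersections of \emph{principal} ideals be finitely generated, a special case of coherence.

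For the substantial implication (iii)$\Rightarrow$(i) I would first reduce matters to $2$-generated ideals: it suffices to show that $(a,b)$ is principal for every pair of nonzero $a,b\in D$, since then $D$ is a local B\'ezout domain, hence a valuation domain (equivalently, $D$ is a $t$-local GCD domain and one invokes Proposition \ref{F}). The finite conductor hypothesis enters through the identity $(a,b)^{-1}=\frac{1}{ab}(aD\cap bD)$: since $aD\cap bD$ is finitely generated, so is $(a,b)^{-1}$, and therefore the trace ideal $(a,b)(a,b)^{-1}$ is a finitely generated integral ideal. The $t$-local hypothesis then collapses the relevant notions of invertibility: by Theorem \ref{t-inv} (parts (v) and (vi)) a proper finitely generated ideal of $D$ cannot have $t$-closure equal to $D$, and every $t$-invertible ideal is principal by Proposition \ref{ACZ}(1). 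Consequently $(a,b)$ is principal as soon as $\big((a,b)(a,b)^{-1}\big)^t=D$, i.e. as soon as $(a,b)$ is $t$-invertible.

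It remains to produce this last equality, and this is where I expect the main difficulty to lie, as it is the only place where the integral closure of $D$ is genuinely used. The idea is that, for an integrally closed domain, the finitely generated ideal $(a,b)$ is $v$-invertible, $\big((a,b)(a,b)^{-1}\big)^v=D$; because the finite conductor condition guarantees that $(a,b)^{-1}$ is finitely generated, the product $(a,b)(a,b)^{-1}$ is $v$-finite, so its $v$- and $t$-closures coincide and $v$-invertibility upgrades to $t$-invertibility. Establishing the $v$-invertibility of finitely generated ideals, that is, that an integrally closed finite conductor domain is a $v$-domain and hence (by the $v$-finiteness just noted) a P$v$MD, is the technical heart of the argument; I would derive it from the characterization of integral closure through the behaviour of the $v$-operation on the powers $(a,b)^n$, exploiting that $v$ acts multiplicatively on such ideals precisely when $D$ is integrally closed. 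Granting this, $(a,b)$ is $t$-invertible, hence principal by the reduction of the previous paragraph, so $D$ is a valuation domain; together with Proposition \ref{F} this closes the cycle.
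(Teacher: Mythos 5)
Your overall architecture coincides with the paper's: both reduce (iii)$\Rightarrow$(i) to the statement that an integrally closed finite conductor domain is a P$v$MD, and then invoke the fact that a $t$-local P$v$MD is a valuation domain (Proposition \ref{F}, which rests on Proposition \ref{ACZ}(1)). The paper disposes of the P$v$MD step by citing \cite[Theorem 2]{[Z-FC]}; you attempt to sketch it, and that is where the genuine gap lies. The opening claim of your final paragraph --- that for an integrally closed domain the ideal $(a,b)$ is $v$-invertible --- is false: domains in which every nonzero finitely generated ideal is $v$-invertible are the $v$-domains, and these form a \emph{proper} subclass of the integrally closed domains. Likewise, ``$v$ acts multiplicatively on such ideals precisely when $D$ is integrally closed'' is not a correct characterization, and in any case the derivation you gesture at never uses the finite conductor hypothesis, which cannot be confined to the cosmetic role of upgrading $v$-invertibility to $t$-invertibility. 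So the step you yourself identify as the technical heart is both unproved and supported by an incorrect premise.

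The repair keeps your structure but puts integral closedness in the right place. Set $A=(a,b)$. By the finite conductor hypothesis, $A^{-1}=\frac{1}{ab}(aD\cap bD)$ is finitely generated. Now $(AA^{-1})^{-1}=(D:AA^{-1})=((D:A):A^{-1})=(A^{-1}:A^{-1})$, and since $A^{-1}$ is a finitely generated fractional ideal and $D$ is integrally closed, the determinant trick gives $(A^{-1}:A^{-1})=D$. Hence $(AA^{-1})^{v}=D$, and since $AA^{-1}$ is finitely generated (here FC is used again), $(AA^{-1})^{t}=(AA^{-1})^{v}=D$. From this point your reduction is fine: $A$ is $t$-invertible, hence principal in the $t$-local domain $D$ by Proposition \ref{ACZ}(1), so $D$ is a local B\'ezout domain, i.e.\ a valuation domain. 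Note that integral closedness enters only through $(A^{-1}:A^{-1})=D$, and the finite conductor hypothesis is precisely what makes $A^{-1}$ finitely generated so that this identity applies; your (i)$\Rightarrow$(ii)$\Rightarrow$(iii) implications are correct as written.
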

\begin{proof}
(i) $\Rightarrow $ (ii) $\Rightarrow $ (iii) are all straightforward. 

For
(iii) $\Rightarrow $ (i) note that an integrally closed FC domain is a P$v$MD
 \cite[Theorem 2]{[Z-FC]}  (or, \cite[Exercise 21, page 432]{[G]}) and we already observed that a $t$-local P$v$MD is a valuation domain (Proposition \ref{F}((iii)$\Rightarrow$(i))).
\end{proof}

As an application of the previous proposition, we easily   obtain the following result due to S. McAdam.

\begin{corollary} \label{FB} {\em{(S. McAdam \cite[Theorem 1]{[Mc]})}} 
Let $D$ be an integrally closed  local
domain whose primes
are linearly ordered by inclusion. Assume that $D$ is a FC domain, then $D$ is a valuation domain.
\end{corollary}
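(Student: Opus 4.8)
The plan is to reduce Corollary \ref{FB} to Proposition \ref{FA} by verifying its three hypotheses under the stated assumptions. Since $D$ is assumed to be an integrally closed FC domain, two of the three conditions needed for Proposition \ref{FA}(iii) are already in hand; the only remaining obstacle is to show that $D$ is $t$-local, i.e., that its maximal ideal is a $t$-ideal. Once this is established, Proposition \ref{FA}((iii)$\Rightarrow$(i)) delivers the conclusion immediately.

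The key observation is that the hypothesis ``the primes of $D$ are linearly ordered by inclusion'' is precisely the hypothesis of Proposition \ref{AD}. First I would invoke Proposition \ref{AD}: since $D$ is local and $\Spec(D)$ is linearly ordered under inclusion, $D$ is $t$-local. Thus the maximal ideal $M$ of $D$ is automatically a $t$-ideal, and no separate argument is required — the linear ordering does all the work here.

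With $D$ now known to be an integrally closed, FC, $t$-local domain, the hypotheses of Proposition \ref{FA}(iii) are satisfied verbatim. Therefore $D$ is a valuation domain by Proposition \ref{FA}, completing the proof.

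I do not expect any genuine obstacle in this argument: the entire content is bookkeeping that matches the given hypotheses against the inputs of two earlier results. The one point worth flagging is simply recognizing that ``primes linearly ordered by inclusion'' is exactly what Proposition \ref{AD} needs in order to conclude $t$-locality, so that the integrally-closed and FC hypotheses can then be fed directly into Proposition \ref{FA}. A one-line proof citing Proposition \ref{AD} and Proposition \ref{FA} should suffice.
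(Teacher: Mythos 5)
Your proposal is correct and coincides with the paper's own proof: Proposition \ref{AD} gives $t$-locality from the linear ordering of $\Spec(D)$, and then Proposition \ref{FA}((iii)$\Rightarrow$(i)) applies to the integrally closed FC $t$-local domain $D$. No gaps.
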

\begin{proof}
By Proposition \ref{AD}, $D$ is $t$-local.
The conclusion follows from Proposition \ref{FA}((iii)$\Rightarrow$(i)).
\end{proof}

A nonzero element $r$ of a domain $D$ is called a {\it primal element} if for all $x,y\in
D\backslash \{0\}$ $r|xy$ implies that $r=st$ where $s|x$ and $t|y$. 
A
domain whose nonzero elements are all primal is called a {\it pre-Schreier domain}. 
An
integrally closed pre-Schreier domain was called a {\it Schreier domain} by P.M. Cohn in
his paper \cite[page 254]{[C]}. 
There, he showed that a GCD domain is a Schreier domain \cite[Theorem 2.4]{[C]}.

Based on considerations initiated by McAdam and Rush \cite{[McR]}, a module $M$ is said to be {\it  locally cyclic} if every finitely generated
submodule of $M$ is contained in a cyclic submodule of $M$.
 Thus, in particular,  an ideal $I$
of $D$ is locally cyclic if, for any finite set of elements $
x_{1},x_{2}, \dots, x_{n}\in I$, there is an element $d\in I$ such that $d|x_{k}$ for each $k$, $1\leq k\leq n$.

In \cite[Theorem 1.1]{[Z-PS]}, M. Zafrullah has shown that  {\it an integral domain $D$ is pre-Schreier if and only if for
all $a,b\in D\backslash (0)$ and $x_{1},x_{2}, \dots , x_{n}\in (a)\cap (b)$ there
is $d\in (a)\cap (b)$ such that $d|x_{k}$, for each $k$, $1\leq k\leq n$. }

Based on this,  we easily obtain the following.

 \begin{lemma}\label{FC}
 If $D$ is a pre-Schreier domain and 
$a,b\in D\backslash \{(0)\}$, then the following are equivalent:

\begin{enumerate}
\item[(i)]  $(a)\cap (b)$ is principal.

\item[(ii)] $(a)\cap (b)$ is finitely generated.

 \item[(iii)]
$(a)\cap (b)$ is a $v$-ideal of finite type.
\end{enumerate}
\end{lemma}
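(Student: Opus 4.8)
The plan is to prove the cyclic chain of implications (i) $\Rightarrow$ (ii) $\Rightarrow$ (iii) $\Rightarrow$ (i), exploiting the fact that the trivial implications point one way while the pre-Schreier hypothesis supplies the return. First I would dispose of the two easy steps that require no hypothesis: (i) $\Rightarrow$ (ii) is immediate since a principal ideal is finitely generated, and (ii) $\Rightarrow$ (iii) follows because a finitely generated ideal $F$ always satisfies $F = F^v = F^t$ when $F$ is itself a $v$-ideal---but here I must be a little careful, since a finitely generated ideal need not be divisorial in general. The cleaner route is to observe that $(a) \cap (b)$ is automatically a $v$-ideal (being an intersection of principal, hence divisorial, ideals), so once it is finitely generated it is by definition a $v$-ideal of finite type.

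The substance of the lemma lies in (iii) $\Rightarrow$ (i), and this is where the pre-Schreier hypothesis enters through Zafrullah's criterion quoted just above the lemma statement. The plan is as follows. Suppose $(a) \cap (b)$ is a $v$-ideal of finite type, so that $(a) \cap (b) = F^v$ for some finitely generated $F = (x_1, x_2, \dots, x_n) \subseteq (a) \cap (b)$. By the cited characterization of pre-Schreier domains, there is an element $d \in (a) \cap (b)$ such that $d \mid x_k$ for each $k$, $1 \leq k \leq n$. Then $(x_1, \dots, x_n) \subseteq (d) \subseteq (a) \cap (b)$, and I would apply the $v$-operation across these inclusions: taking $v$-closures gives $(a) \cap (b) = F^v \subseteq (d)^v = (d) \subseteq (a) \cap (b)$, where the middle equality uses that a principal ideal is divisorial. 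This forces $(a) \cap (b) = (d)$, which is principal.

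The one point that deserves attention---and the step I expect to be the main (mild) obstacle---is justifying that $F^v \subseteq (d)^v$ from $F \subseteq (d)$; this is just property $(\ast_2)$ of the $v$-operation (order-preservation), but I want to make sure the inclusion $F \subseteq (d)$ is genuinely available, i.e.\ that the element $d$ furnished by the pre-Schreier criterion actually dominates \emph{all} the generators $x_k$ of the chosen finitely generated $F$. Since the criterion applies to any finite subset of $(a) \cap (b)$, I simply apply it to the generating set $\{x_1, \dots, x_n\}$, and the inclusion is secured. Thus the whole argument is short once the pre-Schreier characterization is invoked with the right finite set; the content is entirely in matching the finite-type generating data of the $v$-ideal to the locally cyclic description of $(a) \cap (b)$.
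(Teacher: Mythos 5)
Your proposal is correct and follows essentially the same route as the paper: the easy implications are dismissed, and (iii) $\Rightarrow$ (i) is obtained by applying the quoted pre-Schreier characterization to the generators $x_1,\dots,x_n$ of the finite-type $v$-ideal, producing $d\in(a)\cap(b)$ with $(x_1,\dots,x_n)\subseteq(d)$ and then sandwiching $(a)\cap(b)=(x_1,\dots,x_n)^v\subseteq(d)\subseteq(a)\cap(b)$. Your extra remark that $(a)\cap(b)$ is automatically divisorial (as an intersection of principal ideals) is a correct and welcome justification of the step the paper calls straightforward.
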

\begin{proof}
 Indeed (i) $\Rightarrow $ (ii) $\Rightarrow $ (iii) are all
straightforward. All we need is show (iii) $\Rightarrow $ (i). For this note
that if $(a)\cap (b)=\left( x_{1},x_{2}, \dots, x_{n}\right) ^{v},$ then,
$x_{1},x_{2}, \dots,x_{n}\in (a)\cap (b)$. 
Since $D$ is pre-Schreier, there is an element  
$d\in (a)\cap (b)$ such that $d|x_{k}$, for each $k$, $1\leq k\leq n$, i.e.,  $(x_{1},x_{2},\dots,x_{n})\subseteq (d)$. 
But then $\left( x_{1},x_{2},...x_{n}\right) ^{v}\subseteq
(d)$, and so $(d)\subseteq (a)\cap (b)=\left(
x_{1},x_{2},...x_{n}\right) ^{v}\subseteq (d)$.
\end{proof}

Call a domain $D$ a {\it $v$-finite conductor} (for short, {\it $v$-FC}) {\it domain} if, for each pair 
$0\neq a,b\in D$, the ideal $(a)\cap (b)$ is a $v$-ideal of finite type. Then, recalling that a GCD domain is integrally closed, from Lemma \ref{FC},
we easily deduce: 

\begin{corollary}\label{GCD}
Let $D$ be an integral domain.
The following are equivalent.

\begin{enumerate}
\item[(i)]
$D$ is a GCD domain
\item[(ii)] $D$ is a
Schreier and a $v$-FC domain.
\item[(iii)] $D$ is a
pre-Schreier and a $v$-FC domain.
\end{enumerate}
\end{corollary}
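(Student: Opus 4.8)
The plan is to prove Corollary \ref{GCD} by combining Lemma \ref{FC} with the classical characterization of GCD domains and with the definition of the $v$-FC property, establishing the cycle (i)$\Rightarrow$(ii)$\Rightarrow$(iii)$\Rightarrow$(i). The key observation driving everything is that a GCD domain is the same thing as a domain in which $(a)\cap(b)$ is principal for every pair $0\neq a,b$, which is exactly the hypothesis that Lemma \ref{FC} converts into ``$v$-ideal of finite type'' under the pre-Schreier assumption.

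First I would handle (i)$\Rightarrow$(ii). Assume $D$ is a GCD domain. By P.M. Cohn's theorem (recalled just before Lemma \ref{FC}, \cite[Theorem 2.4]{[C]}), every GCD domain is a Schreier domain, so $D$ is Schreier (in particular pre-Schreier and integrally closed). It remains to check that $D$ is $v$-FC: but for a GCD domain, $(a)\cap(b)$ is principal for each pair $0\neq a,b$, hence in particular a $v$-ideal of finite type (a principal ideal $(d)$ satisfies $(d)=(d)^v$ and is generated by a single element). The implication (ii)$\Rightarrow$(iii) is trivial, since every Schreier domain is by definition an integrally closed pre-Schreier domain, so the pre-Schreier property is retained while the $v$-FC property is simply carried over.

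The substantive step is (iii)$\Rightarrow$(i). Assume $D$ is pre-Schreier and $v$-FC. Fix any pair $0\neq a,b\in D$. By the $v$-FC hypothesis, $(a)\cap(b)$ is a $v$-ideal of finite type, i.e.\ condition (iii) of Lemma \ref{FC} holds for this pair. Since $D$ is pre-Schreier, Lemma \ref{FC} applies and yields that $(a)\cap(b)$ is principal. As $a,b$ were arbitrary, every intersection of two principal ideals of $D$ is principal, which is precisely the defining condition of a GCD domain. This completes the cycle. I do not expect a genuine obstacle here: the only point requiring care is that the $v$-FC hypothesis must be invoked pairwise (for each fixed $a,b$) so that Lemma \ref{FC}(iii) is available, and one should note in passing that a GCD domain is integrally closed, so no separate integral-closure verification is needed to land inside the Schreier class in the forward direction. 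The proof is therefore a short assembly of the quoted facts rather than a computation.

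\begin{proof}
(i)$\Rightarrow$(ii). If $D$ is a GCD domain, then by Cohn \cite[Theorem 2.4]{[C]} it is a Schreier domain. Moreover, for each pair $0\neq a,b\in D$, the ideal $(a)\cap (b)$ is principal, hence a $v$-ideal of finite type; thus $D$ is $v$-FC.

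(ii)$\Rightarrow$(iii). Immediate, since a Schreier domain is, by definition, an integrally closed pre-Schreier domain.

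(iii)$\Rightarrow$(i). Assume $D$ is pre-Schreier and $v$-FC. Fix $0\neq a,b\in D$. By the $v$-FC hypothesis, $(a)\cap (b)$ is a $v$-ideal of finite type, so condition (iii) of Lemma \ref{FC} holds for the pair $a,b$. As $D$ is pre-Schreier, Lemma \ref{FC} gives that $(a)\cap (b)$ is principal. Since $a,b$ were arbitrary, $D$ is a GCD domain.
\end{proof}
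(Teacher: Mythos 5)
Your proof is correct and follows essentially the same route the paper intends: the paper gives no explicit proof, deducing the corollary directly from Lemma \ref{FC} together with Cohn's theorem that a GCD domain is Schreier (hence integrally closed), which is exactly the assembly you carry out in the cycle (i)$\Rightarrow$(ii)$\Rightarrow$(iii)$\Rightarrow$(i).
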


 With this preparation, we have the
following result.

\begin{corollary}  \label{FD} 
For   an integral domain $D$, the following are
equivalent:
\begin{enumerate}
\item[(i)] 
 $D$ is a valuation domain,

\item[(ii)]  $D$ is a pre-Schreier $t$-local  coherent domain,

\item[(iii)]  $D$ is a pre-Schreier $t$-local  FC domain,

\item[(iv)]  $D$ is a pre-Schreier $t$-local $v$-FC domain,

\item[(v)]   $D$ is a GCD $t$-local domain.
\end{enumerate}
\end{corollary}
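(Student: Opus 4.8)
The plan is to establish the cycle of implications $(i) \Rightarrow (ii) \Rightarrow (iii) \Rightarrow (iv) \Rightarrow (v) \Rightarrow (i)$, exploiting the fact that the four conditions $(ii)$--$(v)$ all carry the common hypotheses ``pre-Schreier'' and ``$t$-local,'' while the coherence-type side conditions sit in a descending hierarchy: coherent $\Rightarrow$ FC $\Rightarrow$ $v$-FC. Once this observation is in place, the whole corollary becomes an assembly of three previously established facts, namely Lemma \ref{FC}, Corollary \ref{GCD}, and Proposition \ref{F}.

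First I would dispatch the routine implications. For $(ii) \Rightarrow (iii)$, a principal ideal is finitely generated, so the intersection of two principal ideals is a special instance of the intersection of two finitely generated ideals; hence every coherent domain is an FC domain, and the pre-Schreier and $t$-local hypotheses are retained verbatim. For $(iii) \Rightarrow (iv)$, I would note that $(a)\cap (b)$ is always a divisorial ideal, being an intersection of the divisorial ideals $aD$ and $bD$; writing $F := (a)\cap (b)$, the FC hypothesis says $F$ is finitely generated, and since $F = F^v$ we conclude that $(a)\cap (b)$ is a $v$-ideal of finite type, which is precisely the $v$-FC condition (alternatively one may invoke Lemma \ref{FC} directly). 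The implication $(i) \Rightarrow (ii)$ is equally direct: a valuation domain is $t$-local, it is a GCD domain and hence, by Cohn's theorem, a Schreier and a fortiori pre-Schreier domain, and it is coherent because its ideals are linearly ordered, so the intersection of two finitely generated (equivalently, principal) ideals is again principal and in particular finitely generated.

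The two substantive links are carried by earlier results. For $(iv) \Rightarrow (v)$, I would apply Corollary \ref{GCD}: a domain that is simultaneously pre-Schreier and $v$-FC is a GCD domain, so a pre-Schreier $t$-local $v$-FC domain is exactly a GCD $t$-local domain, giving $(v)$. Finally, $(v) \Rightarrow (i)$ is nothing but the equivalence $(ii) \Leftrightarrow (i)$ of Proposition \ref{F}, namely that a $t$-local GCD domain is a valuation domain. Stringing these implications together closes the cycle and yields the equivalence of all five statements.

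Since the genuine mathematical content has already been absorbed into Lemma \ref{FC}, Corollary \ref{GCD}, and Proposition \ref{F}, no single step here presents a real obstacle; the proof is essentially bookkeeping. If anything requires care, it is ensuring that the common hypotheses ``pre-Schreier'' and ``$t$-local'' are preserved unchanged along the chain $(ii) \Rightarrow (iii) \Rightarrow (iv)$, so that Corollary \ref{GCD} may legitimately be applied at the $v$-FC stage, and, in $(i) \Rightarrow (ii)$, correctly citing the chain valuation $\Rightarrow$ GCD $\Rightarrow$ Schreier $\Rightarrow$ pre-Schreier rather than attempting to verify the primality of elements of a valuation domain by hand.
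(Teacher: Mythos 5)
Your proposal is correct and follows essentially the same route as the paper: the chain $(i)\Rightarrow(ii)\Rightarrow(iii)\Rightarrow(iv)$ via the hierarchy coherent $\Rightarrow$ FC $\Rightarrow$ $v$-FC (which the paper dismisses as obvious and you usefully spell out), then $(iv)\Leftrightarrow(v)$ by Corollary \ref{GCD} and $(v)\Leftrightarrow(i)$ by Proposition \ref{F}. No discrepancies to report.
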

\begin{proof} It is obvious that  (i) $\Rightarrow $ (ii) $\Rightarrow $ (iii) $\Rightarrow $ (iv);   (iv) $\Leftrightarrow $ (v)  by Corollary \ref{GCD}  and
(v) $\Leftrightarrow $ (i) by Proposition \ref{F}.
\end{proof}
\smallskip

Obviously, the above are not the only situations in which a  $t$-local integral domain becomes a
valuation domain. We describe next another interesting  situation of this phenomenon, in case of existence of a comparable element.

\begin{proposition}
\label {G} 
Suppose that an integral domain  $D$ contains a nonzero nonunit comparable element $x$
and let $Q:=\bigcap_{n\geq 0} x^{n}D$. Then, $D$ is a valuation domain if and only if 
$D_{Q}$ is a valuation domain.
\end{proposition}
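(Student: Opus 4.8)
The plan is to prove both implications, with the forward direction being essentially trivial and the converse carrying the content. For the forward direction, suppose $D$ is a valuation domain. Then every localization of a valuation domain is again a valuation domain, so $D_Q$ is a valuation domain. This needs no appeal to the comparable element $x$ at all.

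For the converse, I would exploit the structural decomposition provided by Theorem \ref{D} (equivalently Lemma \ref{C}). Since $x$ is a nonzero nonunit comparable element, Theorem \ref{D} tells us that $Q = \bigcap_{n\geq 0} x^n D$ is a prime ideal such that $D/Q$ is a valuation domain and $Q = QD_Q$. The key point is that $D$ is squeezed between two valuation-like pieces: the quotient $D/Q$ is a valuation domain, and we are now assuming the localization $D_Q$ is a valuation domain. The strategy is to show that comparability of \emph{arbitrary} pairs of principal ideals follows from comparability in these two pieces. So I would take two arbitrary nonzero elements $a, b \in D$ and aim to show that $aD \subseteq bD$ or $bD \subseteq aD$.

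The main step is a case analysis governed by the prime $Q$. First, by Lemma \ref{C}(4) (or the proof of Theorem \ref{D}), the complement $D\setminus Q = \mathscr{C}$ is precisely the set of nonzero comparable elements of $D$, and $Q$ compares with every ideal under inclusion. So if either $a$ or $b$ lies outside $Q$, that element is comparable and we are done immediately. The remaining case is $a, b \in Q$. Here I would pass to the localization $D_Q$: since $D_Q$ is a valuation domain, $aD_Q$ and $bD_Q$ are comparable, say $aD_Q \subseteq bD_Q$, so $a = b(r/s)$ for some $r \in D$, $s \in D\setminus Q$. The delicate part is descending this comparison from $D_Q$ back to $D$ using the condition $Q = QD_Q$: because $a \in Q = QD_Q$ and $s$ is comparable (being outside $Q$), I would argue as in the proof of Theorem \ref{D} that the element $a/s$, a priori in $D_Q$, actually already lies in $Q \subseteq D$, whence $a \in bD$ with the relevant quotient genuinely in $D$.

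The hard part will be making this descent fully rigorous, i.e. verifying that the factorization obtained in $D_Q$ can be realized inside $D$ itself, rather than merely in the localization. The condition $Q = QD_Q$ is exactly what licenses this: elements of $Q$ divided by elements comparable to (hence strictly containing, as ideals) the members of $Q$ stay inside $Q$. Once this descent is established in the case $a,b \in Q$, combined with the easy case where one of them is comparable, every pair of principal ideals of $D$ is comparable, so $D$ is a valuation domain. I would double-check that no pair of elements escapes the dichotomy and that the comparability of the chosen divisor $s$ is genuinely used to force $a/s \in Q$, mirroring the computation $\alpha D \subseteq Q \subsetneq \tau D$ from the proof of Theorem \ref{D}.
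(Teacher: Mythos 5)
Your overall strategy is genuinely different from the paper's. You reduce to showing that any two principal ideals of $D$ are comparable, splitting according to whether one of $a,b$ lies outside $Q$ (hence is comparable --- note that in general $D\setminus Q$ is only \emph{contained in} the set of comparable elements, equality requiring the minimality hypothesis of Lemma \ref{C}(6); but the inclusion, which follows from Lemma \ref{C}(5) or the proof of Theorem \ref{D}, is all you need) or both lie in $Q$. The paper instead proves that \emph{every} finitely generated ideal is principal, splitting the finitely generated ideals according to whether they contain a nonunit factor of a power of $x$: those that do are principal by \cite[Theorem 2.4]{[GMZ]}, and those contained in $Q$ are handled by comparing in $D_Q$ and clearing denominators. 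Your reduction to pairs is legitimate and somewhat more elementary, since a domain in which principal ideals are pairwise comparable is a valuation domain; the forward implication is handled identically in both arguments.

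However, your descent step has a genuine gap. When $a,b\in Q$ and $aD_Q\subseteq bD_Q$, the mechanism ``$Q=QD_Q$ pulls the quotient back into $D$'' applies only when $a/b$ lies in the maximal ideal $QD_Q$, i.e.\ when $aD_Q\subsetneq bD_Q$; in that subcase $a/b\in QD_Q=Q\subseteq D$ and you are done. But when $aD_Q=bD_Q$ the element $a/b$ is a \emph{unit} of $D_Q$, hence is \emph{not} in $QD_Q$, and nothing you have said places it (or $b/a$) in $D$; your auxiliary observation that $a/s\in Q$ for a denominator $s\in D\setminus Q$ is correct but does not yield $a\in bD$. The missing subcase is repaired with an ingredient you already have on the table: since $a/b$ is a unit of $D_Q$, you may write $a/b=r/s$ with \emph{both} $r$ and $s$ in $D\setminus Q$ (the numerator cannot lie in $Q$, else $r/s\in QD_Q$), so $sa=rb$ with $r$ and $s$ both comparable; then $r\mid s$ forces $a\mid b$ and $s\mid r$ forces $b\mid a$ after cancellation. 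With that subcase added, and the equality ``$D\setminus Q=\mathscr{C}$'' weakened to the inclusion actually needed, your argument goes through and gives a correct alternative to the paper's proof.
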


\begin{proof} Indeed, if $D$ is a valuation domain, since $Q$ is a prime ideal (Theorem \ref{D}), 
$D_{Q}$ is also a valuation domain and so we have only to take care of its
converse. 

The presence of a nonzero nonunit comparable element makes $D$ a 
$t$-local domain (Proposition \ref{B}). 
In order to prove that $D$ is a valuation domains, we consider the finitely generated ideals of $D$.
We split the proper finitely generated ideals into two
types: {\bf (a)} ones that contain a nonunit factor of a power of $x$ and {\bf (b)}
ones that do not contain a nonunit factor of a power of $x$.

 Ones in part
{\bf (a)} are principal by \cite[Theorem 2.4]{[GMZ]} and ones in part {\bf (b)} are contained in $Q$ and are principal
proper ideals of  the valuation domain $D_{Q}$ and hence are in $QD_{Q}$.
 By Proposition \ref{D} above, 
 $QD_{Q}=Q$, so, for each $y$ in $Q$, $yD_{Q}$ is (also) an ideal of $D$, i.e., $yD_{Q}=yD$.
  Now, let 
 $x_{1},x_{2},\dots, x_{n}\in Q$ and consider  the ideal $(x_{1},x_{2},\dots, x_{n})$. 
 Since $
D_{Q}$ is a valuation domain, $(x_{1},x_{2}, \dots, x_{n})D_{Q}=dD_{Q}$ and we can
assume that $d$ is in $D$.
 So, for some $r_{i}\in D$ and $s_{i}\in
D\backslash Q$ we have $x_{i}=\frac{r_{i}}{s_{i}}d$, for each $i$.

 So $(x_{1},x_{2},\dots, x_{n})=
 (\frac{r_{1}}{s_{1}}d,\frac{r_{2}}{s_{2}}d,\dots,\frac{r_{n}}{s_{n}}d)$. 
Removing the
denominators, we get $s(x_{1},x_{2},\dots, x_{n})=(t_{1}d, t_{2}d,\dots,t_{n}d)=(t_{1},t_{2},\dots,t_{n})d$, for some $s \in D\setminus Q$ , where $s_i|s$ and $t_i := \frac{s}{s_i}r_i$ , for each $i$.
As $dD_{Q}=(x_{1},x_{2},\dots, x_{n})D_{Q}=s(x_{1},x_{2},\dots, x_{n})D_{Q} $ $=(t_{1},t_{2},\dots, t_{n})dD_{Q}
$, we conclude that $(t_{1},t_{2},\dots, t_{n})D_{Q}=D_{Q}$. 
But that means that
at least one of the $t_{i}$ is in $D\backslash Q$ 
and hence is a comparable
element (Lemma \ref{C}(5)). 
But then, by \cite[Theorem 2.4]{[GMZ]}, $(t_{1},t_{2},\dots, t_{n})$ is
principal generated by a comparable element $t$.
 Thus, 
 $s(x_{1}, x_{2},\dots, x_{n})=(t_{1},t_{2},\dots, t_{n})d =tdD$. Since $s$ and $t$ are comparable, we have two
possibilities: {\bf ($\boldsymbol{\alpha} $)}\ $u(x_{1},x_{2},\dots, x_{n})=dD$ or {\bf ($\boldsymbol{\beta}$)}\ $
(x_{1},x_{2},\dots, x_{n})=vdD$, for some $u, v \in D$. In both cases $(x_{1},x_{2},...x_{n})$ turns
out to be a principal ideal of $D$ (in case {\bf ($\boldsymbol{\alpha}$)} because $d\in u(x_{1},x_{2},\dots, x_{n})$ and so  $u|d$ in $D$).
\end{proof}

\medskip 

\section{Applications: Shannon's quadratic extension}

 A domain $D$ is a {\it treed domain} if it has a treed spectrum, i.e., 
Spec($D$) is  a tree  as a poset   with respect to the set inclusion.  Note that $D$ is
a treed domain if and only if any two incomparable primes of $D$ are co-maximal.
Indeed, if $D$ is a treed then  $D_{P}$ is also a treed (more precisely, Spec$(D_P)$ is linearly ordered)
for every nonzero prime ideal $P$ of $D$. So, by Proposition \ref{AD},  $D_P$ is a $t$-local domain and thus $P = PD_P \cap D$ is a $t$-ideal of $D$.
 Indeed, if $F$ is a finitely generated ideal of $D$ contained in $P$, then $F^tD_P =F^vD_P\subseteq (FD_P)^v  = (FD_P)^t \subseteq (PD_P)^t = PD_P$ 
and so $ F^t \subseteq (FD_P)^t \cap D \subseteq PD_P \cap D = P$ (see  also \cite[page 436]{[Z-t]}).
Therefore, in a treed domain, every
nonzero prime ideal  is a $t$-ideal (Proposition \ref{AD}), in particular every maximal ideal is a $t$-ideal, and moreover it is well behaved.
However, a general $t$-local domain $D$ may
not have Spec($D$)  a tree  as, for instance, Examples \ref{bad behaved 2}  and \ref{DB} indicate.   So the class of treed domains   is strictly
contained in the class of domains whose maximal ideals are $t$-ideals. But,
in the presence of some extra conditions, this distinction may disappear.

\begin{proposition} \label{H}
For a Pr\"ufer $v$-multiplication domain $D$, the following conditions are
equivalent.
\begin{enumerate}
\item[(i)] 
Every maximal ideal of $D$ is a $t$-ideal.

 \item[(ii)] 
Every prime ideal of $D$ is a $t$-ideal.

\item[(iii)] {\em{Spec}($D$)} is a tree.

\item[(iv)] $D$ is a Pr\"ufer domain.
\end{enumerate}
\end{proposition}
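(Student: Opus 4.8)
The plan is to close the cycle (iv) $\Rightarrow$ (iii) $\Rightarrow$ (ii) $\Rightarrow$ (i) $\Rightarrow$ (iv), reserving the P$v$MD hypothesis for the final implication, where it does all the work. The first three arrows hold for any domain and are essentially already in hand.

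For (iv) $\Rightarrow$ (iii) I would argue that a Pr\"ufer domain is treed: for each maximal ideal $M$, the localization $D_M$ is a valuation domain, so the primes contained in $M$ are linearly ordered; hence any two incomparable primes fail to lie in a common maximal ideal, i.e.\ they are co-maximal, which is exactly treedness. The implication (iii) $\Rightarrow$ (ii) has already been recorded in the paragraph preceding the statement: in a treed domain $\Spec(D_P)$ is linearly ordered, so $D_P$ is $t$-local by Proposition \ref{AD} and every nonzero prime is a $t$-ideal. Finally (ii) $\Rightarrow$ (i) is immediate, since every maximal ideal is prime.

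The heart of the matter is (i) $\Rightarrow$ (iv). The cleanest route is to invoke Theorem \ref{t-inv}: condition (i) is verbatim item (iii) of that theorem, hence equivalent to its item (vi), namely that every $t$-invertible ideal of $D$ is invertible. Now the P$v$MD hypothesis supplies $t$-invertibility for free: every nonzero finitely generated $F \in \f(D)$ satisfies $(FF^{-1})^t = D$, so $F$ is $t$-invertible and therefore, by (vi), invertible. Thus every nonzero finitely generated ideal of $D$ is invertible, which is precisely the statement that $D$ is a Pr\"ufer domain. Alternatively, one may bypass Theorem \ref{t-inv} and argue via Griffin \cite{[Gr]}: condition (i) forces $\Max(D) = \Max^t(D)$ --- a maximal $t$-ideal lies inside a maximal ideal, which is itself a $t$-ideal by (i) and hence equal to it --- so $D_Q$ is a valuation domain for every maximal ideal $Q$, giving Pr\"uferness.

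I expect the only genuine obstacle to be the bookkeeping in (i) $\Rightarrow$ (iv): one must be sure that the \emph{finitely generated} ideals delivered by the P$v$MD condition are legitimately among the ``$t$-invertible ideals'' to which Theorem \ref{t-inv}(vi) applies (they are, since $t$-invertibility is exactly $(FF^{-1})^t = D$), and, in the Griffin variant, that every maximal ideal is not merely a $t$-ideal but a \emph{maximal} $t$-ideal. Both points are routine once unwound, so with the two cited results in hand the argument is short; the conceptual content is simply that (i) upgrades ``$t$-invertible'' to ``invertible,'' and in a P$v$MD that upgrade turns $t$-multiplication into honest multiplication.
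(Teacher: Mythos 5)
Your proposal is correct and follows essentially the same route as the paper: the cycle (iv)\,$\Rightarrow$\,(iii)\,$\Rightarrow$\,(ii)\,$\Rightarrow$\,(i)\,$\Rightarrow$\,(iv) with the first three arrows holding for arbitrary domains exactly as in the text, and your ``Griffin variant'' of (i)\,$\Rightarrow$\,(iv) — every maximal ideal becomes a (maximal) $t$-ideal, so each $D_M$ is a valuation domain and $D$ is Pr\"ufer — is precisely the paper's argument. Your primary route for (i)\,$\Rightarrow$\,(iv) via Theorem \ref{t-inv}((iii)$\Leftrightarrow$(vi)), upgrading the $t$-invertibility of finitely generated ideals supplied by the P$v$MD hypothesis to honest invertibility, is a clean and equally valid alternative, but the substance is the same.
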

\begin{proof}
(iv) $\Rightarrow$ (iii) $\Rightarrow$ (ii) $\Rightarrow$ (i) hold in general (without the P$v$MD assumption). More precisely, 
 (iv) $\Rightarrow$ (iii) is clear because in a Pr\"ufer domain 
$D$,  $D_{P}$ is a valuation domain for every nonzero prime ideal $P$ and so Spec($D$) is a tree.   (iii) $\Rightarrow$ (ii) has been explained above.

 (i) $\Rightarrow$ (iv)  For every prime $t$-ideal $P$ of a P$v$MD $D$,
we have $D_{P}$ a valuation domain (see, for instance, \cite[Corollary 4.3]{[MZ]}) and  if we assume that $D_{M}$ is a
valuation domain, for every maximal ideal $M$ of $D$, then $D$ is well known to be
a Pr\"ufer domain. 
\end{proof}

The previous proposition  leads to
the following result for FC domains.

\begin{corollary} \label{HA} 
Let $D$ be an integral domain. The following are equivalent.
\begin{enumerate}
\item[(i)] $D$is  an integrally closed finite conductor treed domain. 
\item[(ii)] $D$ is a treed P$v$MD;
\item[(iii)] $D$ is Pr\"ufer.
\end{enumerate}
\end{corollary}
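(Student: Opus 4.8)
The plan is to prove the three conditions equivalent by establishing the cycle (iii) $\Rightarrow$ (i) $\Rightarrow$ (ii) $\Rightarrow$ (iii), using as the two main engines the fact (already invoked inside the proof of Proposition \ref{FA}) that an integrally closed finite conductor domain is a P$v$MD \cite[Theorem 2]{[Z-FC]}, together with Proposition \ref{H}, which for a P$v$MD identifies ``Spec$(D)$ is a tree'' with ``$D$ is Pr\"ufer''. In effect the genuine content is already packaged in those two earlier results, and the work here is mostly a matter of assembling them and carrying the treedness hypothesis along.

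For (i) $\Rightarrow$ (ii), I would simply observe that if $D$ is integrally closed and a finite conductor domain, then $D$ is a P$v$MD by \cite[Theorem 2]{[Z-FC]}; the treedness hypothesis is untouched, so $D$ is a treed P$v$MD. This step is essentially a citation. For (ii) $\Rightarrow$ (iii), I would exploit the remark recorded just before Proposition \ref{H}: in a treed domain every nonzero prime ideal (in particular every maximal ideal) is a $t$-ideal, because each localization $D_P$ has linearly ordered spectrum and is therefore $t$-local by Proposition \ref{AD}. Thus for a treed P$v$MD condition (iii) (equivalently (i)) of Proposition \ref{H} holds, and the implication (iii) $\Rightarrow$ (iv) of that proposition delivers that $D$ is Pr\"ufer.

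For (iii) $\Rightarrow$ (i), I would check the three defining properties separately. A Pr\"ufer domain is integrally closed by definition, and it is treed because $D_P$ is a valuation domain for every nonzero prime $P$, so Spec$(D)$ is a tree --- this is exactly the reasoning for (iv) $\Rightarrow$ (iii) in Proposition \ref{H}. It remains to see that a Pr\"ufer domain is a finite conductor domain: since every nonzero finitely generated ideal of a Pr\"ufer domain is invertible, hence finitely presented, a Pr\"ufer domain is coherent, and by the characterization of coherence recalled earlier (the intersection of every pair of finitely generated ideals is finitely generated \cite[Theorem 2.2]{[Ch]}) the intersection of two principal ideals is in particular finitely generated, which is precisely the FC condition.

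I do not expect any serious obstacle here, since each implication reduces to a previously established result. The only place requiring a short independent argument is the verification in (iii) $\Rightarrow$ (i) that a Pr\"ufer domain is FC; and even that is routine once one passes through coherence rather than arguing about principal-ideal intersections directly. The substantive mathematics --- that integrally closed FC domains are P$v$MDs and that treed P$v$MDs are Pr\"ufer --- lives in \cite{[Z-FC]} and Proposition \ref{H}, so the present proof is really a bookkeeping of those facts around the common hypothesis of treedness.
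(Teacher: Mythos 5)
Your proof is correct and follows essentially the same route as the paper: (i)$\Rightarrow$(ii) via the fact that integrally closed FC domains are P$v$MDs, (ii)$\Leftrightarrow$(iii) via Proposition \ref{H}, and (iii)$\Rightarrow$(i). The only difference is cosmetic: for the step that a Pr\"ufer domain is a finite conductor domain the paper simply cites \cite[Corollary 10]{[Z-FC]}, whereas you derive it through coherence (invertible $\Rightarrow$ finitely presented $\Rightarrow$ coherent $\Rightarrow$ FC by Chase's criterion), which is a valid self-contained substitute.
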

\begin{proof} (i) $\Rightarrow$ (ii), since an integrally closed finite conductor domain is a P$v$MD by Proposition \ref{FA} and  \cite[Corollary 4.3]{[MZ]}.
(ii) $\Leftrightarrow$ (iii) by Proposition \ref{H} and (iii) $\Rightarrow$ (i) because a Pr\"ufer domain is a FC domain \cite[Corollary 10]{[Z-FC]}.
\end{proof}

\medskip

Indeed, it is worth noting that a nonzero proper ideal $I$ in an integral domain $D$
is said to be an {\it ideal of grade $1$} if  $I$ does not contain a set of
elements forming a regular sequence of length $\geq 2$.
Recall that, if an ideal $I$ of an integral domain $D$ contains a regular sequence of length 2, then $I^{-1}
 = D$   \cite[Exercise 1, page 102]{[Kap]}. 
 So, every $t$-ideal of an integral domain
is a grade 1 ideal and every nonzero prime ideal in a treed domain is a
grade 1 ideal.
With this background, for the next application we need  a little bit of preparation.
\smallskip 

Let $(R, \frak{m})$ be a regular
local integral domain with quotient field $F$  and $\frak{p}$ a prime ideal of $R$ so that $R/\frak{p}$ is a regular local domain.
A {\it monoidal transform of $R$ with nonsingular center} $\frak{p}$ 
is a local domain of the type $T:=R[\frak{p}x^{-1}]_Q$,   where $0\neq x \in \frak{p}$ and $Q $ is a prime ideal in $R[\frak{p}x^{-1}]$ such that $\frak{m} \subseteq Q$.
In particular, assume that $ \dim(R)=n$,  and $
\frak{p}=\frak{m}=(x_{1},x_{2},\dots,x_{n})R$, where $\{x_{1},x_{2},\dots, x_{n}\}$ form a regular sequence in $R$. 
Choose $i\in \{1, 2,\dots,n\}$, and consider the overring $R[x_{1}/x_{i},x_{2}/x_{i},\dots ,x_{n}/x_{i}]$  of $R$. Take
any prime ideal $Q$ of $R[x_{1}/x_{i}, x_{2}/x_{i},\dots,$ $x_{n}/x_{i}]$ such that 
$Q\supseteq \frak{m}$. 
The ring $R_{1}:=R[x_{1}/x_{i},x_{2}/x_{i},\dots ,$ $x_{n}/x_{i}]_{Q}$ is called a {\it
local quadratic transform} (for short,  {\it  LQT}) of $R$, and, again,  $R_1$ is a regular local integral domain with maximal ideal $\frak{m}_1:=
QR[x_{1}/x_{i},x_{2}/x_{i},\dots ,$ $x_{n}/x_{i}]_{Q}$   \cite[Corollary 38.2]{[Na]}.
Assume that $\dim(R)\geq 2$ in order to have that $R\neq R_1$. 
By Cohen's dimension inequality formula  $\dim(R_{1})\leq n$ \cite[Theorem 15.5]{[Matsu]} (and, more precisely, $\dim(R_{1})=n$ if and only if  $R_{1}/\frak{m}_1$ is an algebraic extension of $R/\frak{m}$)    \cite[(1.4)]{[Ab-1966]}. 

If we iterate the process, we obtain a sequence $
R=:R_{0}\subseteq R_{1}\subseteq R_{2}\subseteq ...$ of regular local
overrings of $R$ such that for each $j \geq 0$, $R_{j+1}$ is a LQT of $R_{j}$.
After a finite number of iterations, the sequence of nonincreasing integers $\dim(R_{j})$ is necessarily bound to stabilize, and
this process of iterating LQTs of the same Krull dimension (definitively, after a certain point)  and ascending
unions of the resulting regular sequences are of interest in algebraic geometry.
 For a description the reader may consult   a couple of recent papers    \cite{[Heinzer et al]} and  \cite{[HLOST]}.
So, let $R=:R_{0}\subseteq R_{1}\subseteq R_{2}\subseteq \dots$ be a sequence of
LQTs from a regular local integral domain $R$ with $\dim(R) \geq 2$ and $\dim(R_j) \geq 2$, for each $j\geq 1$, as described above. The ring $S:=\bigcup _{j\geq 0}R_{j},$ dubbed in
recent work as  {\it Shannon's Quadratic Extension of} $R$, to honor David Shannon \cite{[Sh]}
for his interesting contribution, has drawn particular attention.

Briefly, before Shannon, Abhyankar \cite[Lemma 12]{[Ab]} had shown that, if the regular local ring $R$ has dimension 2, then $
S$ is a valuation overring of $R$ such that the maximal ideal $\frak{m}_{S}$ of $S$
contains the maximal ideal $\frak{m}$ of $R$. David Shannon,  one of Abhyankar's students,  showed that if $\dim(R) >2$, 
$S$ need not be a valuation ring \cite[Examples 4.7 and
4.17]{[Sh]}.

Our purpose here is to look at $S$ from a simple star-operation theoretic
perspective,  to provide some direct straightforward and brief proofs of some
known results and point to known results that could simplify some of the
considerations in recent work.

We start by gathering some information about  the Shannon's Quadratic Extension $S$. 
Next two properties can be easily proved.

\begin{enumerate}

\item[(1)] {\it $S(:=\bigcup _{j\geq 0}R_{j})$, as described above, is a  local ring and, if
$\frak{m}_{S}$ denotes the maximal ideal of $S$,
  $\frak{m}_{S}=\bigcup _{j\geq 0}\frak{m}_{j} $ where $\frak{m}_{j}$ is the maximal ideal of the LQT\ $R_{j}$.}

\item[(2)]  {\it $S$ is integrally closed, as being integrally closed  a first order
property which is preserved by directed unions and hence, in particular, by ascending unions.}

\end{enumerate}

Since $S$ is directed union of regular local integral domains and,   by the Auslander-Buchsbaum theorem \cite[Theorem 20.3]{[Matsu]}, each regular local integral domain is a UFD  and hence, in particular,  a GCD
domain and so, {\sl a fortiori,} a Schreier domain. This observation gives us the next property of $S$.

\begin{enumerate}

\item[(3)] {\it
 $S$ is (at least) a Schreier domain.}  
 \end {enumerate}

 This follows from  a direct verification   that a direct union of (pre-)\-Schreier domains is a  (pre-)Schreier domain.

 \begin{remark}
 \em{
 Note that it is not true that a direct union of GCD-domains is a GCD-domain. An example   can be given by an integral domain of the type  $D^{(\Sigma)}:= D+XD_\Sigma[X] = \bigcup \{D[X/s] \mid s \in \Sigma \}$, where $D$ is a GCD domain and $\Sigma$ is a saturated multiplicative subset $D$, 
 since   it is known that} $D^{(\Sigma)}$ is not a GCD if   $\Sigma$ is not a splitting set,  i.e.,  if $\Sigma$ does not verify the condition that, for each $0\neq d\in D$, $d=sa$ for some $s\in \Sigma$ and $a\in D$ with $aD\cap s'D=as'D$ for all $s'\in \Sigma$  \cite [Corollary 1.5]{[Z-D_S]}.

 We give now an explicit  example.
 Let $\boldsymbol{\mathcal{E}}$ be the ring of entire functions. It is well known that $\boldsymbol{\mathcal{E}}$
is a B\'ezout domain    \cite[Exercise 18, page 147]{[G]}
and that every nonzero nonunit $x$ of $\boldsymbol{\mathcal{E}}$ can be written
uniquely as a countable product of finite powers of non associate primes,
i.e., $x=u\prod_{\alpha \in A}p_{\alpha }^{n_{\alpha }}$ where $A$ is a countable set, $n_{\alpha
}$ are natural numbers and $p_{\alpha }$ are mutually non associated primes elements of $\boldsymbol{\mathcal{E}}$
and $u$ is a unit in $\boldsymbol{\mathcal{E}}$.   The last property follows from the fact that the set of zeros of a nontrivial entire function is discrete, including multiplicities, the multiplicity of a zero of an entire function is a positive integer and a zero of an entire function determines a principal prime in $\boldsymbol{\mathcal{E}}$  \cite[Theorem 6]{[Hel]}. Clearly,  each of these primes generate a height one maximal ideal of $\boldsymbol{\mathcal{E}}$   \cite[Exercise 19, page 147]{[G]}.

Let $\Sigma$ be the multiplicative set generated by all of these principal,
height one primes and let $X$ be an indeterminate. Then, the ring 
$
\boldsymbol{\mathcal{E}}^{(\Sigma)}:=\boldsymbol{\mathcal{E}}+X\boldsymbol{\mathcal{E}}_{\Sigma}[X]  = \bigcup \{\boldsymbol{\mathcal{E}}[X/s] \mid s \in \Sigma\}$ is not a GCD domain, even though $\boldsymbol{\mathcal{E}}[X/s] $ is a GCD domain for each $s \in \Sigma$.

Indeed, if $x \in \boldsymbol{\mathcal{E}}$ is an infinite product of primes then it is not possible
to write $x =sx_{1}$ where $s\in \Sigma$ and $x _{1}$ is not
divisible by any of the  nonunits in $\Sigma$, since each $s$ is a finite product
of primes and $x $ is a product of infinitely many primes from $\Sigma$.
Thus, $\Sigma$ is not a splitting set and so $\boldsymbol{\mathcal{E}}^{(\Sigma)}$ cannot be a GCD domain. 

However, we claim that
 $\boldsymbol{\mathcal{E}}^{(\Sigma)}$ is a locally
GCD domain.
For  proving the claim, we need some preliminaries.
A prime ideal $P$ of an integral domain $D$ is said {\it to intersect in detail a multiplicative set} $\Sigma$ of $D$
if every nonzero prime ideal $Q$ contained in $P$ intersects $\Sigma$.
It was shown \cite[Proposition 4.1]{[Z-D_S]} that if $D$ is a locally GCD domain
and $\Sigma$ is a multiplicative set of $D$ such that every maximal ideal of $D$
that intersects $\Sigma$, intersects $\Sigma$ in detail, then $D^{(\Sigma)}$ is a locally
GCD domain. 

Indeed, clearly the B\'ezout domain $\boldsymbol{\mathcal{E}}$ is a locally  GCD domain. Moreover, as every maximal ideal
of $\boldsymbol{\mathcal{E}}$ that intersects $\Sigma$ contains a finite product of principal primes and
so must be a principal ideal. Thus,  every maximal ideal of $\boldsymbol{\mathcal{E}}$ that
intersects $\Sigma$, intersects it in detail. Consequently $\boldsymbol{\mathcal{E}}^{(\Sigma)}$ is a locally
GCD domain; however, $\boldsymbol{\mathcal{E}}^{(\Sigma)}$ is not a P$v$MD, since  $\boldsymbol{\mathcal{E}}^{(\Sigma)}$ is a Schreier domain and a P$v$MD which also is a Schreier domain is a GCD domain   \cite[Proposition 2.3]{[AZ-2007]}.

As a final remark, we  recall from   \cite[Proposition 4.3]{[Z-D_S]}  that   in a locally GCD  non-P$v$MD $D$ there always exists a maximal $t$-ideal $Q$ of $D$ such that $QD_Q$ is not a $t$-deal of $D_Q$. More precisely, it can be shown that an integral domain $D$ is a P$v$MD if and only if $D$ is locally P$v$MD and, for every $t$-prime ideal $P$ of $D$, $PD_P$  is a (maximal) $t$-ideal of $D_P$ 
 \cite[Corollary 4.4]{[Z-D_S]}.

 \end{remark}

We now resume our study of Shannon's Quadratic Extension $S$.
 \begin{enumerate}
\item[(4)] {\it There exists an element $x\in \frak{m}_{S}$
such that $\frak{m}_{S}=\sqrt{xS}$ }   \cite[Proposition 3.8]{[HLOST]}. 
 \end{enumerate}

\smallskip

The last property gives us, in light of Corollary \ref{A}(1), the
following property  that is of interest to us.

\begin{enumerate}
\item[(5)] {\it $S$ is a $t$-local integral domain}.

\end{enumerate}

This is enough information to provide very naturally the statements and      easy new
proof(s) of \cite[Theorem 6.2] {[Heinzer et al]}.

\begin{theorem}\label{K} 
{\em  (L. Guerrieri, W. Heinzer, B. Olberding and M.
Toeniskoetter \cite[Theorem 6.2] {[Heinzer et al]})}
 Let $S$ be a quadratic Shannon
extension of a regular local integral domain $R$. Then, the following are equivalent.
\begin{enumerate}

\item[(i)] 
 $S$ is a valuation domain

\item[(ii)]  $S$ is coherent.

\item[(iii)]  $S$ is a finite conductor domain.

\item[(iv)]  $S$ is a GCD domain.

\item[(v)]  $S$ is a P$v$MD.

\item[(vi)]  $S$ is a $v$-finite conductor domain.
\end{enumerate}
\end{theorem}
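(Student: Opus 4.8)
The plan is to exploit the structural facts about $S$ that have already been assembled, so that each of the conditions (ii)--(vi) collapses onto (i) by a direct appeal to the characterization results proved in Section~4. The relevant standing facts are: $S$ is integrally closed (property (2)), $S$ is at least a Schreier domain and hence pre-Schreier (property (3)), and $S$ is $t$-local (property (5)). The whole point is that, once $S$ is simultaneously pre-Schreier and $t$-local, Corollary~\ref{FD} applies verbatim.

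First I would record that $S$ is pre-Schreier and $t$-local. Reading Corollary~\ref{FD} with $D=S$, its clauses ``pre-Schreier $t$-local coherent'', ``pre-Schreier $t$-local FC'', ``pre-Schreier $t$-local $v$-FC'', and ``GCD $t$-local'' become, under these standing hypotheses, logically equivalent respectively to the bare statements ``$S$ is coherent'', ``$S$ is FC'', ``$S$ is $v$-FC'', and ``$S$ is GCD''. Thus Corollary~\ref{FD} yields at once the equivalence of (i), (ii), (iii), (iv), and (vi) of the present theorem: the coherent clause gives (ii), the FC clause gives (iii), the $v$-FC clause gives (vi), and the GCD clause gives (iv). (Alternatively, since $S$ is integrally closed, Proposition~\ref{FA} furnishes the coherent and FC equivalences through integral closure rather than through the Schreier hypothesis; either route is fine.)

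It remains only to fold in condition (v), that $S$ is a P$v$MD. For this I would invoke Proposition~\ref{F}: since $S$ is $t$-local, the equivalence (i)$\Leftrightarrow$(iii) there states precisely that $S$ is a valuation domain if and only if $S$ is a $t$-local P$v$MD, i.e.\ (given $t$-locality) if and only if $S$ is a P$v$MD. This closes the loop (v)$\Leftrightarrow$(i) and completes the chain of equivalences.

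Frankly, there is no genuine obstacle inside the argument itself; essentially all of the content has been front-loaded into properties (1)--(5) of $S$ and into Propositions~\ref{F}, \ref{FA} and Corollary~\ref{FD}, so the deduction is almost mechanical. If I had to name the delicate step, it would be the verification of property (5)---that $S$ is $t$-local---which is not internal to this theorem but rests on property (4), the existence of $x\in\frak{m}_S$ with $\frak{m}_S=\sqrt{xS}$, together with Corollary~\ref{A-AA-AB-AC}(1). Once that ``maximal ideal is the radical of a principal ideal'' fact is in hand, $\frak{m}_S$ is minimal over a principal ideal and hence a $t$-ideal, and the entire cascade above goes through.
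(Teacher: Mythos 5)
Your proposal is correct and follows essentially the same route as the paper: both reduce the theorem to the Section~4 characterizations (Proposition~\ref{F}, Proposition~\ref{FA}, Corollary~\ref{FD}) using the standing facts that $S$ is integrally closed, Schreier, and $t$-local. The only difference is trivial bookkeeping --- the paper gets (ii) and (iii) from Proposition~\ref{FA} via integral closedness and (iv), (v) from Proposition~\ref{F}, whereas you route (ii), (iii), (iv) through Corollary~\ref{FD} via the Schreier property --- and you note the alternative yourself.
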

\begin{proof}
The equivalence of (i) $\Leftrightarrow $ (ii) $\Leftrightarrow $ (iii)
comes from Corollary \ref{FA}. Now (i) $\Leftrightarrow $ (iv) $\Leftrightarrow $
(v) follow from Proposition \ref{F} and, as $S$ is Schreier (by (3)), (i) $\Leftrightarrow $
(vi) by Corollary \ref{FD}.
\end{proof}

From Lemma \ref{FC}, Corollary \ref{FD}, and Theorem \ref{K} we easily deduce the following.

\begin{corollary} \label{KA}
Let $S$ be a quadratic Shannon
extension of a regular local integral domain $R$. If $S$ is not a valuation domain, then $S$ contains a pair of
elements $a, b$ such that $aS\cap bS$ is not a $v$-ideal of finite type.
\end{corollary}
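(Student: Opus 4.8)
The plan is to deduce the statement by contraposition through the chain of equivalences already available for Shannon extensions, so almost no new work is needed. First I would record the two structural facts about $S$ that were established above: $S$ is a Schreier domain (property (3)), hence in particular pre-Schreier, and $S$ is $t$-local (property (5)). These are the only features of $S$ that the argument will use.

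Next, assuming that $S$ is not a valuation domain, I would invoke Theorem \ref{K}: since conditions (i) and (vi) there are equivalent, the failure of $S$ to be a valuation domain forces $S$ to fail to be a $v$-finite conductor domain. Equivalently, one could route this through Corollary \ref{FD}, using that $S$ is pre-Schreier and $t$-local, so that the equivalence (i) $\Leftrightarrow$ (iv) yields the same conclusion. Either way, $S$ is not a $v$-FC domain.

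Finally, I would simply unwind the definition of a $v$-FC domain. By definition, $D$ is $v$-FC precisely when $(a)\cap(b)$ is a $v$-ideal of finite type for \emph{every} pair $0\neq a,b\in D$. The negation of this universal statement is exactly the assertion that there exists a pair $a,b\in S\setminus\{0\}$ for which $aS\cap bS=(a)\cap(b)$ is not a $v$-ideal of finite type, which is the desired conclusion. As a coda, since $S$ is pre-Schreier, Lemma \ref{FC} shows that for this same pair $aS\cap bS$ is at once non-principal, not finitely generated, and not a $v$-ideal of finite type, so a single example witnesses all three failures simultaneously.

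The point to be careful about — rather than a genuine obstacle — is that the negation of the $v$-FC condition is an \emph{existential} statement, so no explicit pair $a,b$ need be exhibited: their existence is guaranteed abstractly by the equivalences. All the real content has already been absorbed into the proof of Theorem \ref{K} and into the Schreier and $t$-local properties of $S$ recorded above.
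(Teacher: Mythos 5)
Your proposal is correct and follows essentially the same route as the paper: a contrapositive argument through the already-established equivalences, using that $S$ is Schreier and $t$-local. The paper phrases it as ``$v$-FC $+$ Schreier $\Rightarrow$ GCD (Corollary \ref{GCD}) $\Rightarrow$ valuation (Theorem \ref{K})'', while you invoke (i) $\Leftrightarrow$ (vi) of Theorem \ref{K} directly, but that equivalence is itself proved by exactly this chain, so the two arguments coincide.
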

\begin{proof}
 If, for each pair of   elements $a,b\in S$, we had that $aS\cap bS$ is a $v$-ideal of finite type, then  $S$ would be a GCD domain by Corollary \ref{GCD}, since  $S$ is a Schreier domain  (by point (3) above).  Therefore, $S$ would be a valuation domain by Theorem \ref{K}, which is not the case.
\end{proof}
 This corollary is significant with reference to the proof of the previous theorem
(Theorem \ref{K}) in that there are P$v$MDs $D$, such as Krull domains, that
contain elements $a, b$ such that $aS\cap bS$ a $v$-ideal of finite type, which may not be finitely generated.

\medskip

From \cite[Proposition 4.1]{[HLOST]},  we conclude that $S$ has another property of
interest.

\medskip

\begin{enumerate}

\item[(5)] {\it For each element $x\in \frak{m}_{S}$ such that $\frak{m}_{S}=\sqrt{xS}$, the integral domain $T:=S[1/x]$ is
a regular local ring with $\dim(T)=\dim(S) -1$.
}
\end{enumerate}

\medskip

\ So, if $\dim(S)=2$ and $\frak{m}_{S}$ contains a nonzero comparable element then we know that $S
$ is a valuation domain (Theorem  \ref{D} and (5)).

\smallskip

If $\dim(S) > 2$ then $S$ cannot be a valuation
domain, whether $S$ contains a comparable element or not, because a regular local
ring $T$, constructed from $S$ as in (5),  has $\dim(T) > 1$, and thus $T$ may not be a valuation domain. 
However,  if $\frak{m}_{S}=pS$
is principal then, $S$ is a non-valuation $t$-local domain that contains a
comparable element, by Proposition \ref{prime}(2). 
This fact, together with   Proposition \ref{G}, provides a
definitive criterion that can be used to construct examples of non-valuation $t
$-local domains containing a comparable element, even in dimension two.

\begin{example}
\em{
Let  $\mathbb Z$ be   the ring of integers, $\mathbb Q$ (resp., $\mathbb R$) the field of rational numbers (resp. real numbers) and $p$ a 
prime element in $\mathbb Z$.  Let $P$ be the maximal ideal of the DVR \ $\mathbb{R}[\![X]\!]$ and set $D:={\mathbb Z}_{(p)}+X\mathbb{R}[\![X]\!] = {\mathbb Z}_{(p)}+P$.
The integral domain $D$ is local with principal maximal ideal $M:= pD$ and $\bigcap _{n\geq 0} p^nD = X\mathbb{R}[\![X]\!]=P$.   Clearly, $p$ is a proper comparable element in $D$.
Since  $D_{P}=\mathbb{Q}+X\mathbb{R}[\![X]\!]$  is not a valuation
domain, $D$ is a 2-dimensional non-Noetherian non-valuation
$t$-local integral domain with prime spectrum linearly ordered given by $\{ M \supset P \supset (0)\}$.
}
\end{example}

In the same vein, and this is suggested by Tiberiu Dumitrescu, we
have another example.

\begin{example}
\em{
Let  $\mathbb Z$ be   the ring of integers, $\mathbb Q$  the field of rational numbers  and $p$ a nonzero
prime element in $\mathbb Z$.  Let $D:={\mathbb Z}_{(p)}+P$ where $P$ is the maximal ideal $(X^{2},X^{3})$ of
 $\mathbb{Q}[\![X^{2},X^{3}]\!]$.  As above, $D$ is  a local domain with maximal ideal $M = p{\mathbb Z}_{(p)}+P= pD$ and
   $\bigcap _{n\geq 0} p^nD =P$. 
   In  this case,  $D_{P}=\mathbb{Q}[\![X^{2},X^{3}]\!]$ which is a well known  
 1-dimensional Noetherian domain that is not a valuation domain (in fact, it is non integrally closed).
 Thus, $D$ is a 2-dimensional non-Noetherian non-valuation
$t$-local integral domain,   having  a proper comparable element and prime spectrum linearly ordered given by $\{ M:=p{\mathbb Z}_{(p)}+(X^{2},X^{3})\mathbb{Q}[\![X^{2},X^{3}]\!]\supset P \supset (0)\}$.
  
  We can provide examples in any dimension.
  Let $P$ be the maximal ideal of the $n$-dimensional regular local
ring $\mathbb{Q}[\![X_{1},X_{2},\dots, X_{n}]\!].$ Then $D:=\mathbb{Z}_{(p)}+P$ is local with maximal ideal $M:= pD$. In particular, $D$ contains a proper
comparable element, e.g., $p$, and, of course, $D_{P}$ is far from being a valuation
domain. Thus, $D$ is an $(n+1)$-dimensional non-valuation
$t$-local integral domain.
}
\end{example}

Note that a 1-dimensional domain that contains a
nonzero nonunit comparable element is a valuation domain. 
This follows from
the following two facts
(1) the presence of a comparable element forces the domain
to be (1-dimensional) $t$-local and
(2) a domain is a valuation domain if
and only if every nonzero prime ideal contains a nonzero comparable element (Lemma \ref{Kap}).

\medskip

 From (5), we deduce another interesting property of $S$.
\begin{enumerate}

\item[(6)] {\it Let $S$ be as above (i.e.,   a quadratic Shannon
extension of a regular local integral domain), for each element $x\in \frak{m}_{S}$ such that $\frak{m}_{S}=\sqrt{xS}$,
call the saturation of the multiplicative set $\{x^{n} \mid n\in \mathbb N\}$, {\em{span of}} $x$ and denote it
by \texttt{span}$(x)$.
 Then,}
 \begin{enumerate}
 \item[(6a)] {\it 
  for every nonunit $
h$ in \texttt{span}$(x)$ we have $\frak{m}_{S}=\sqrt{hS}$} \  and 

 \item[(6b)] {\it 
 $\frak{m}_{S}$ is generated by
nonunits in \texttt{span}$(x)$.}

\end{enumerate}
\end{enumerate}

 The saturated multiplicative set \texttt{span}$(x)$ has been used before, by Dumitrescu, Lequain, Mott, and Zafrullah in \cite{[DLMZ]}, to determine the number of distinct maximal $t$-ideals that the element  $x$ belongs to. 
Here, the statement that the ideal $\frak{m}_{S}$  is generated by nonunit members of \texttt{span}$(x)$   is caused by the fact that there is only one maximal $t$-ideal (i.e., $\frak{m}_{S}$) involved.

\medskip

  Note that, before introducing quadratic Shannon extensions of local regular rings, all examples of $t$-local domains that we have considered in the present paper were valuation domains or rings obtained by some pullback construction.  At this point, it is natural to ask if    the quadratic Shannon extensions, that are not valuation domains, could as well be obtained  by some appropriate pullback construction.
For this purpose, we start by recalling some other properties of the quadratic Shannon extensions.
\begin{enumerate}

\item[(7)] {\it Let $S$ be as above (i.e.,   a quadratic Shannon
extension of a regular local integral domain of dimension $> 2$). If $S$ is Archimedean, then its complete integral closure $S^\ast$ coincides with $(\frak{m}_S : \frak{m}_S) =T \cap W$, where $\frak{m}_S$ is the maximal ideal of $S$, $T =S[1/x]$ is the local regular overring of $S$ introduced in {\em{(5)}} and $W$ is a uniquely determined valuation overring of $S$ and if $S \neq S^\ast$, $S^\ast$ is a generalized Krull domain \cite[Theorem 6.2]{[HLOST]}.
 }
\end{enumerate}

In the previous situation, if $S \neq S^\ast$,  $\frak{m}_S$ is a height 1 prime ideal of $S^\ast$, since it is the center of the maximal ideal of the valuation overring $W$ of $S^\ast$  
(see \cite[Corollary 6.3]{[HLOST]} and \cite[Theorem 7.4]{[HOT]}). Therefore, $S$ is the pullback of the residue field $S/\frak{m}_S$ with respect to the canonical projection $S^\ast \rightarrow S^\ast /\frak{m}_S$.

On the other hand, in the non-Archimedean case, we know the following fact.
\begin{enumerate}

\item[(8)] {\it Let $S$ be as above (i.e.,   a quadratic Shannon
extension of a regular local integral domain of dimension $> 2$). If $S$ is non-Archimedean, then its complete integral closure $S^\ast$ coincides with the overring $T =S[1/x]$, $\bigcap\{ x^nS \mid n\geq 0 \}=:\frak{p}$ is a proper prime ideal of $S$ and $T=(\frak{p}:\frak{p})$ \cite[Threorem 6.9 and Corollary 6.10]{[HLOST]}.
}  
\end{enumerate}

In the previous situation,  the integral domain $S/\frak{p}$ is a DVR \cite[Lemma 3.4]{[HLOST]},  and $T =S_\frak{p}$, since $T=S[1/x]$ is a ring of fractions of $S$ and $\frak{p}$ is disjoint from the multiplicative set $\{ x^n \mid n \geq 0\}$. Therefore, $S$ is the pullback of  $S/\frak{p}$ with respect to the canonical projection $T \rightarrow T/\frak{p}$, where $T/\frak{p}$ is a field, coinciding with the residue field $S_\frak{p}/\frak{p}S_\frak{p}$   (isomorphic to the field of quotients of the integral domain $S/\frak{p}$).

\medskip

The last remaining case is when the quadratic Shannon
extension $S$ is (Archime\-dean and) completely integrally closed. An example is given in  \cite[Corollary 7.7]{[HOT]}. 
In this situation $S$ may not be obtained by a pullback construction of some of its overrings, since, if an integral domain $A$ shares a nonzero ideal with one of its  proper  overrings $B$ then $A$ and $B$ must have the same complete integral closure  \cite[Lemma 5]{[GH]}.

\medskip

We end with a classification of the $t$-local domains, which could be useful for  detecting  $t$-local domains that are not issued from a pullback construction.
The following proposition is a consequence of  more general results concerning $DT$-domains, proved by  G. Picozza and  F. Tartarone in  \cite{[PT]}.

\begin{proposition} Let $(D,M)$ be a local domain.
\begin{enumerate}
\item[(1)]  If  $D\neq (M:M)$, then $D$ is a $t$-local domain.

\item[(2)]  If  $D= (M:M)$ and $M$ is finitely generated, then $D$ is a $t$-local domain if and only if $M$ is principal.

\item[(3)]  If  $D=  (M:M)$, and $M$ is not  finitely generated, then $D$ is a $t$-local domain if and only if $M$ is not $t$-invertible.
\end{enumerate} 
\end{proposition}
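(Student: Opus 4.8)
The plan is to reduce all three parts to a single observation plus one structural dichotomy. For the reduction, I would first note that every finitely generated $F\subseteq M$ has $F^{-1}\supseteq D$, whence $F^v\subseteq D$; summing over such $F$ gives $M\subseteq M^t\subseteq D$. Since $M$ is maximal, this forces $M^t=M$ or $M^t=D$, so that \emph{$D$ is $t$-local if and only if $M^t\neq D$} (equivalently, $M$ contains no finitely generated $F$ with $F^{-1}=D$, i.e.\ no proper $\GV$-ideal). For the dichotomy, I would prove that either $M$ is invertible — in which case $M$ is principal (as $D$ is local) and $(M:M)=D$ — or $M$ is not invertible, in which case $M^{-1}=(M:M)$. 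The nontrivial inclusion here is $M^{-1}\subseteq(M:M)$: for $x\in M^{-1}$ the set $xM$ is an integral ideal, and if $xM\not\subseteq M$ then $xM=D$, making $M$ invertible; so non-invertibility yields $x\in(M:M)$.

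For (1), the hypothesis $D\neq (M:M)=:R$ rules out invertibility of $M$ (otherwise $R=D$), so the dichotomy gives $M^{-1}=R\supsetneq D$. I would then compute the conductor $(D:R)$: because $RM\subseteq M$, the ideal $M$ is an $R$-submodule, so $MR\subseteq M\subseteq D$ gives $M\subseteq (D:R)$, while $1\in R$ gives $(D:R)\subseteq D$; maximality of $M$ leaves $(D:R)=M$ or $(D:R)=D$, and the latter would force $R\subseteq D$, contrary to $R\neq D$. Hence $(D:R)=M$, and therefore $M^v=(M^{-1})^{-1}=R^{-1}=(D:R)=M$, so $M$ is divisorial, a $t$-ideal, and $D$ is $t$-local. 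I expect this conductor identity to be the main obstacle, since it is the only place where the hypothesis $D\neq(M:M)$ is genuinely exploited.

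For (2), the converse is immediate: a principal $M$ is divisorial, hence a $t$-ideal by Corollary \ref{A-AA-AB-AC}(4). For the forward direction, $t$-locality together with $M$ finitely generated gives $M=M^t=M^v$, so $M$ is divisorial. Now $MM^{-1}$ is an integral ideal, so it is either $D$ or contained in $M$; the second option gives $M^{-1}\subseteq(M:M)=D$, forcing $M^{-1}=D$ and hence $M^v=D\neq M$, a contradiction. Thus $MM^{-1}=D$, so $M$ is invertible and therefore principal.

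For (3), a non-finitely-generated $M$ is automatically non-invertible, so the dichotomy gives $M^{-1}=(M:M)=D$; consequently $MM^{-1}=M$ and $(MM^{-1})^t=M^t$. Therefore $M$ is $t$-invertible precisely when $M^t=D$, i.e.\ precisely when $D$ is \emph{not} $t$-local, and taking contrapositives yields the stated equivalence. (The forward implication can alternatively be read off Proposition \ref{ACZ}(1): in a $t$-local domain a $t$-invertible $M$ would be principal, hence finitely generated, contradicting the hypothesis.)
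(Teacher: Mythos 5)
Your proof is correct and follows essentially the same route as the paper: part (1) via the conductor $(D:(M:M))=M$, part (2) via the equivalence of divisoriality and invertibility of $M$ when $D=(M:M)$, and part (3) via the identity $MM^{-1}=M$ forcing $(MM^{-1})^t=M^t$. You simply make explicit the two facts the paper leaves implicit (that $t$-locality is equivalent to $M^t\neq D$, and that a non-invertible $M$ satisfies $M^{-1}=(M:M)$), and in (3) you replace the appeal to Theorem \ref{t-inv} by the same computation run as a biconditional.
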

\begin{proof} (1) If $D\neq (M:M)$, then necessarily the maximal ideal $M$ is the conductor of the inclusion $D \hookrightarrow (M:M)$ and so $M$ is a divisorial ideal of $D$.

(2) Assume that $D=  (M:M)$, and $M$ is  finitely generated, clearly $M$ is divisorial if and only if $(M:M)=D \neq M^{-1}= (D:M)$ and this happens if and only if $M \neq MM^{-1} (\subseteq D)$ or, equivalently, if and only if $MM^{-1}=D$. In a local domain, a nonzero ideal is invertible if and only if it is a principal ideal.

(3) Assume that $D=  (M:M)$, $M$ is not  finitely generated and, moreover, $M$ is not a $t$-invertible ideal. If $M$ is not a $t$-ideal, then $M^t=D$ and thus $(MM^{-1})^t= M^t =D$, which is a contradiction.

Conversely, since $M$ is  not finitely generated, $M$ is not invertible and, since  $D$ is $t$-local,  $M$ is  not  even $t$-invertible  (Theorem \ref{t-inv} ((iii)$\Rightarrow$(vi)).
\end{proof}

Any pseudo-valuation non-valuation domain provides an example of case (1); a   discrete valuation domain  (for short, DVR) is an example of case (2) and a rank 1  non-DVR valuation domain is an example of case (3).

\medskip


\noindent {\bf Acnowledgments.} The authors would like to thank Francesca Tartarone and  Lorenzo Guerrieri for the useful conversations on some aspects of the present paper and the anonymous
referee for several helpful suggestions which  improved the quality of the manuscript.


\end{document}